\documentclass[11pt,a4paper]{article}

\usepackage{bbm,
  a4wide,
  amssymb,
  amsmath,
  amsthm,
}

\usepackage{pgf}
\usepackage{tikz}
\usetikzlibrary{arrows,automata}


\usepackage[colorlinks=true]{hyperref}
\usepackage{pdfsync}

\usepackage[textwidth=0.89in,textsize=scriptsize]{todonotes}

\newcommand{\Prec}{{\sf Prec}}
\newcommand{\pfl}{\xi}

\usepackage[ruled,vlined]{algorithm2e}
\setlength{\algomargin}{9pt}

\newcommand{\NN}{\mathbb{N}}
\newcommand{\QQ}{\mathbb{Q}}
\newcommand{\ZZ}{\mathbb{Z}}
\newcommand{\RR}{\mathbb{R}}

\newcommand{\leqlex}{\leq_{\rm lex}}
\newcommand{\ltlex}{<_{\rm lex}}
\newcommand{\occ}{{\sf occ}}
\newcommand{\addtag}{\refstepcounter{equation}\tag{\theequation}}

\newcommand{\uh}[1]{\!\upharpoonright_{#1}}
\renewcommand{\dh}[1]{\!\downharpoonright_{#1}}

%
\theoremstyle{definition}
\newtheorem{theorem}{Theorem}
\newtheorem{lemma}[theorem]{Lemma}
\newtheorem{corollary}[theorem]{Corollary}
\newtheorem{proposition}[theorem]{Proposition}
\newtheorem{question}[theorem]{Question}
\newtheorem{definition}[theorem]{Definition}

\newtheorem{observation}[theorem]{Observation}

\newtheorem{example}[theorem]{Example}
\numberwithin{theorem}{section}

\begin{document}


\title{Normality in non-integer bases and\\polynomial time randomness}
\author{Javier Almarza \and Santiago Figueira}

%
%

\maketitle

\begin{abstract}
It is known that if $x\in[0,1]$ is polynomial time random (i.e.\ no polynomial time computable martingale succeeds on the binary fractional expansion of $x$) then $x$ is normal in any integer base greater than one. We show that if $x$ is polynomial time random  and $\beta>1$ is Pisot, then $x$ is ``normal in base $\beta$'', in the sense that the sequence $(x\beta^n)_{n\in\NN}$ is uniformly distributed modulo one.
We work with the notion of {\em $P$-martingale}, a generalization of martingales to non-uniform distributions, and show that a sequence over a finite alphabet is distributed according to an irreducible, invariant Markov measure~$P$ if an only if no $P$-martingale whose betting factors are computed by a deterministic finite automaton succeeds on it. This is a generalization of Schnorr and Stimm's characterization of normal sequences in integer bases. Our results use tools and techniques from symbolic dynamics, together with automata theory and algorithmic randomness.
\end{abstract}


\section{Introduction}

%
%

A weak notion of randomness for  sequences over a finite alphabet $\Sigma=\{0,\dots,b-1\}$ ($b\in\NN$) is {\em normality},  introduced by Borel in 1909. Normality may be regarded as a ``law of large numbers" for blocks of events, in the sense that the average occurrences of a block $\sigma\in\Sigma^*$ of length $n$ converges to $|\Sigma|^{-n}$.
A real number $x$ is called \textit{normal in base b} ($b\in\NN$) if its expansion in base $b$ is normal.  While almost all numbers are normal to all bases it is not too difficult to see that this notion is not base invariant. In fact for any multiplicatively independent bases $b$ and $b'$ the set of numbers normal to $b$ but not normal to $b'$ has full Hausdorff dimension~\cite{Polli81}. We say a number $x$ is \textit{absolutely normal} if it is normal in all integer bases greater than one. It is not difficult to see that $x$ is normal in base $b$ if and only if the sequence $(xb^n)_{n\in\NN}$ is u.d.\ modulo one, and then $x$ is absolutely normal if and only if $(xb^n)_{n\in\NN}$ is uniformly distributed (u.d.) modulo one for all integer $b>1$.

Polynomial time randomness is another weak notion of randomness. We say that $x$ is \textit{polynomial time random in base $b$} if no martingale (a formalization of {\em betting strategy}) on the alphabet $\{0,\dots,b-1\}$  which is computable in polynomial time succeeds on the expansion of $x$ in base $b$. A result of Schnorr~\cite{S71} states that if $x$ is polynomial time random in base $b$ then $x$ is normal in base $b$.
It was recently shown~\cite{Fi2013} that polynomial time randomness is base invariant, so that being polynomial time random in a single base implies being normal for all bases, i.e.\ being absolutely normal. The converse is not true, since there are absolutely normal numbers which are computable in polynomial time~\cite{Fi2013,BHS13,LM12}, and these cannot be polynomial time random. The following question was left open in~\cite{Fi2013}:

\begin{question}
Suppose that $x$ is polynomial time random. Is the sequence $(x\beta^n)_{n\in\NN}$  u.d.\ modulo one for all rational $\beta>1$?
\end{question}
The distribution of $(x\beta^n)_{n\in\NN}$ modulo one for rational $\beta$ seems, however, fairly intractable. It is unknown, for instance, if $((3/2)^n)_{n\in\NN}$ is u.d.\ modulo one. Our first main result is that there is a class of algebraic reals for which the question may be readily handled:

\begin{theorem}\label{thm:main1} If $x$ is polynomial time random then the sequence $(x\beta^n)_{n\in\NN}$ is u.d.\ modulo one for all Pisot $\beta>1$.
\end{theorem}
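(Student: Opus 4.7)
The plan is to reduce Theorem~\ref{thm:main1} to the $P$-martingale characterization announced in the abstract, via the symbolic dynamics of $\beta$-expansions. For $\beta>1$, write $\pfl(x)=a_1a_2\ldots\in\{0,\ldots,\lceil\beta\rceil-1\}^{\NN}$ for the greedy $\beta$-expansion of $x\in[0,1)$. A classical result (Parry; Bertrand-Mathis) says that when $\beta$ is Pisot, the $\beta$-shift is sofic and admits a finite-state Markovian cover on which the Parry measure $\mu_\beta$ of maximal entropy lifts to an irreducible, invariant Markov measure $P$; moreover $(x\beta^n)_{n\in\NN}$ is u.d.\ modulo one if and only if $\pfl(x)$ is $P$-generic, i.e.\ its block-frequencies converge to their $P$-values. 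Thus the theorem reduces to showing that polynomial time randomness of $x$ forces $\pfl(x)$ to be $P$-generic.

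I would argue the contrapositive. By the paper's main characterization, if $\pfl(x)$ fails to be $P$-generic then there is a $P$-martingale $d$ with DFA-computable betting factors that succeeds on $\pfl(x)$. From $d$ I would build a polynomial time computable martingale on the base-$b$ expansion of $x$, with $b=\lceil\beta\rceil$, that succeeds; by the base invariance of polynomial time randomness from~\cite{Fi2013} this contradicts the assumption on $x$. The construction has two layers: (i) a polynomial time procedure that, given the first $m$ digits of the base-$b$ expansion of $x$ (hence $x$ up to error $b^{-m}$), produces the first $n=\Theta(m)$ digits of $\pfl(x)$, using the Pisot hypothesis --- all Galois conjugates of $\beta$ lie strictly inside the unit disk --- to keep round-off error in the $\beta$-transformation $T_\beta(y)=\beta y \bmod 1$ under control; and (ii) a conversion of the $P$-martingale $d$ into a standard base-$b$ martingale by rescaling at each prefix $\sigma$ by the Radon-Nikodym factor $b^{-|\sigma|}/P(\sigma)$.

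The main obstacle is layer (ii). By irreducibility of $P$ there is a uniform lower bound $p_{\min}>0$ on its positive transition probabilities, so the rescaling factor is at most $(b/p_{\min})^n$, singly exponential in $n$. The gains of $d$ must overcome this. Since $P$ has entropy $h(P)=\log\beta\leq\log b$, a Shannon-McMillan-Breiman estimate gives $P(\sigma)\approx\beta^{-n}$ on a $P$-typical cylinder of length $n$, so the net effect of the change of measure on typical prefixes is a factor $(\beta/b)^n\leq 1$; the remaining $p_{\min}^{-n}$ slack must be absorbed by $d$'s success rate, which is what strong versions of the $P$-martingale characterization should provide. Making this precise --- in particular handling prefixes that fall outside the typical set, and verifying that DFA-computable $P$-martingales supplied by the characterization indeed succeed at a geometric rate compatible with the rescaling --- is the delicate technical step linking the automaton-level characterization to polynomial time complexity.
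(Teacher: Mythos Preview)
Your high-level contrapositive strategy matches the paper's: use Bertrand's theorem to pass from u.d.\ mod~1 to $P_\beta$-distribution of the $\beta$-expansion, invoke the DFA $P$-martingale characterization to obtain a succeeding $P_\beta$-(super)martingale, and then convert it into a polynomial time binary martingale. The divergence is entirely in how the conversion is carried out, and your proposed mechanism has a genuine gap.

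Layer (i) as stated cannot work. Knowing $x$ to precision $b^{-m}$ does \emph{not} determine $\Theta(m)$ digits of $p_\beta(x)$: the map $x\mapsto p_\beta(x)$ is discontinuous at every $\beta$-adic point, so whenever some iterate $T_\beta^k(x)$ falls within $b^{-m}$ of a threshold $a/\beta$, the $(k{+}1)$-st $\beta$-digit is undetermined. The Pisot hypothesis controls $\|\beta^n\|$, i.e.\ the orbit of $1$, not the orbit $T_\beta^n(x)$ of an arbitrary $x$; it gives no leverage on this issue. Moreover, a martingale must be defined on \emph{all} binary prefixes $\tau$, not only those extending to the particular $x$, so layer (i) would have to be a total procedure, which makes the discontinuity problem unavoidable. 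There is no uniform polynomial bound on how many base-$b$ digits are needed to fix a given $\beta$-digit.

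The paper sidesteps this entirely. Rather than computing $\beta$-digits, it passes to the Borel measure $\mu_M$ on $[0,1]$ induced by the $P_\beta$-martingale (so $\mu_M(I_\sigma)=M(\sigma)P_\beta(\sigma)$ on $\beta$-adic intervals) and defines the binary martingale directly from its cumulative distribution function:
\[
N(\tau)=2^{|\tau|}\bigl(\cdfM(\reabin{\tau}+2^{-|\tau|})-\cdfM(\reabin{\tau})\bigr).
\]
After first replacing $M$ by a $P_\beta$-martingale with the savings property, $\cdfM$ satisfies an almost-Lipschitz bound; combined with polynomial-time approximation of dyadic rationals by $\beta$-adic reals and polynomial-time computability of $P_\beta$ from Parry's explicit density, this makes $N$ polynomial time computable. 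Success of $N$ at $x$ follows because the slope of $\cdfM$ diverges at any point where $M$ succeeds. In this route your layer-(ii) Radon--Nikodym worry never arises: one does not rescale by $b^{-|\sigma|}/P(\sigma)$ at all, and the only comparison needed between $P_\beta$ and Lebesgue is the global two-sided bound $k'\lambda\le\widehat P_\beta\le k\lambda$ coming from Parry's closed form for the density.
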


%


Observe that any non-integer Pisot $\beta$ is irrational, and as a consequence of a result of Brown, Moran and Pearce \cite[Theorem 2]{Brown.Moran.etal:86}, there are uncountably many reals which are absolutely normal but $(x\beta^n)_{n\in\NN}$ is not u.d.\ modulo one.

The formulation of normality to integer bases $\beta$ in terms of modulo one uniform distribution allows us to understand normality as equivalent to what ergodic theory calls \textit{genericity}, an equivalence which boils down to two facts: 1) the map $T_\beta(x)=(\beta x) \mod 1$ on $[0,1)$ is equivalent to a ``shift" rightwards in the space of sequences $\{0,\dots,\beta-1\}^\mathbb{N}$ when $x$ is mapped to its base $\beta$ expansion; 2) $(x\beta^n)\mod 1=T_\beta^n(x)$.

When a non-integer base $\beta$ is considered, 2) is immediately false, while 1) has no clear reformulation, since there is no obvious candidate for a space of sequences that ``represent" numbers in base $\beta$. It is here that the theory of $\beta$-shifts and $\beta$-representations, developed, among others, by Parry~\cite{Parry60} and Bertrand~\cite{Bertrand86}, helps fill in the missing pieces.

Once the space of sequences that represent numbers in the base $\beta$ (using symbols from $\Sigma=\{0,\dots,\lceil\beta\rceil-1\}$) is defined, it is equipped with a natural shift transformation and a measure $P_\beta$ called the \textit{Parry measure}, which plays the same role that the uniform or Lebesgue measure played in integer representation. Indeed, a result by Bertrand says that, when $\beta$ is Pisot, if a real number $x$ has a $\beta$-expansion that is distributed according to $P_\beta$ (this is the analogue notion to being ``normal in base $\beta$''), then $(x\beta^n)_{n\in\NN}$ is u.d.\ modulo one.

To see how this is useful for the proof of Theorem~\ref{thm:main1}, let us say we have a number $z$ such that $(z\beta^n)_{n\in\NN}$ is not u.d.\ modulo one. Then, by Bertrand's theorem, its $\beta$-representation would have some block $\sigma$ whose average occurrences do not converge to $P_\beta(\sigma)$. We would then want to construct a polynomial time martingale that succeeds by betting on that block, as is done in the integer base case.

However, this cannot be done in a straightforward manner, since the martingale condition as used in the algorithmic randomness literature, assumes outcomes should be distributed according to the uniform measure.

We work with a generalized definition of martingales which captures the idea of a ``fair" betting strategy when expansions are supposed to obey some non-uniform distribution $P$. Indeed, this definition of a \textit{P-martingale} will capture the broader sense of \textit{martingale} as it is used in probability theory. In this setting, not only may the probability of the next symbol be different from $|\Sigma|^{-1}$, it may also show all forms of conditional dependence on the preceding symbols. It should be noted that randomness notions under measures different from Lebesgue have already been considered in, for example,~\cite{Reimann06}.

Schnorr and Stimm~\cite{Sch72} show that a sequence is normal in base $b$ if and only if no martingale on the alphabet of $b$ digits whose betting factors are computed by a deterministic finite automaton (DFA) succeeds on the expansion of $x$ in base $b$.
Our second main result is a generalization of this last statement in terms of $P$-martingales:
\begin{theorem}\label{thm:main2}
A sequence is distributed according to an irreducible, invariant Markov measure $P$ if an only if no $P$-martingale whose betting factors are computed by a DFA succeeds on it.
\end{theorem}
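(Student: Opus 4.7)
My plan is to prove the two directions separately. The forward direction ($P$-distributed $\Rightarrow$ no DFA $P$-martingale succeeds) uses the ergodic/law-of-large-numbers behavior of the joint (DFA-state, symbol) process; the reverse uses an explicit construction of a finite-state $P$-martingale that exploits a conditional block-frequency deviation, in the spirit of Schnorr and Stimm's original argument.

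For the forward direction, let $x$ be $P$-distributed and $d$ a $P$-martingale whose betting factors $r(q,a)$ depend only on the state $q$ of a DFA $A$. By taking a product with an automaton tracking the last symbol (or the last $k$ symbols, for order-$k$ Markov $P$), we may assume $q$ determines $P(\cdot\mid w)$ whenever $w$ reaches $q$, so the $P$-martingale condition becomes $\sum_a P(a\mid q)\,r(q,a) = 1$. Writing $(q_n)$ for the DFA trajectory along $x$,
$$\tfrac{1}{n}\log d(x_1\cdots x_n) \;=\; \tfrac{1}{n}\sum_{i=1}^n \log r(q_{i-1},x_i).$$
Since the DFA is deterministic and $x$ is $P$-distributed, the empirical frequencies of the pairs $(q_{i-1},x_i)$ along $x$ converge to $\nu(q)P(a\mid q)$, where $\nu$ is the stationary distribution on $Q$ (unique by irreducibility of $P$). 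Hence the above limit equals $\sum_q\nu(q)\sum_a P(a\mid q)\log r(q,a)$, which by Jensen's inequality applied to the martingale condition is $\leq 0$ at each $q$, with equality iff $r(q,a)=1$ on the support of $P(\cdot\mid q)$. In the strict case $d\to 0$ exponentially; in the equality case every visited factor equals $1$, so $d$ is constant along $x$. Either way $d$ does not succeed.

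For the reverse direction, suppose $x$ is not $P$-distributed. Then some block has empirical frequency that fails to converge to its $P$-value, and a short induction on block length extracts a pair $\tau\in\Sigma^*$ with $P(\tau)>0$ and $a\in\Sigma$ such that, along an infinite subsequence of lengths $n_k$,
$$\frac{\occ(\tau a,\,x_1\cdots x_{n_k})}{\occ(\tau,\,x_1\cdots x_{n_k})} \;\geq\; P(a\mid\tau) + \varepsilon$$
for some fixed $\varepsilon>0$ (the reversed inequality is handled symmetrically). Let $A$ be the DFA whose state is the longest suffix of the read prefix that is a prefix of $\tau$, and let $q^\ast$ be the state reached exactly at $\tau$. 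Set $r(q^\ast,a) = 1+\delta$ and $r(q^\ast,b) = 1 - \delta\,P(a\mid\tau)/(1-P(a\mid\tau))$ for $b\neq a$, and $r(q,\cdot)\equiv 1$ at every other state. For small $\delta>0$ this gives a valid $P$-martingale, and grouping the factors of $\log d(x_1\cdots x_{n_k})$ by visits to $q^\ast$ yields $\log d(x_1\cdots x_{n_k}) \geq c\,n_k - o(n_k)$ for a positive constant $c=c(\delta,\varepsilon)$, so $d$ succeeds on $x$.

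The main obstacle is the extraction step in the reverse direction: the failure of $P$-distribution is an abstract statement about many block frequencies, and one must reduce it to a single \emph{conditional} excess compatible with the $P$-martingale constraint at a single DFA state. Irreducibility of $P$ plays a double role here—forward, to ensure uniqueness of the stationary $\nu$ on the product chain; reverse, to guarantee that the state $q^\ast$ is visited with positive asymptotic frequency whenever $\tau$ has positive empirical frequency along $x$, so that the local conditional excess translates into a genuine linear gain in $\log d$.
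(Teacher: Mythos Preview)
Your forward direction matches the paper's approach, but the sentence ``the empirical frequencies of the pairs $(q_{i-1},x_i)$ along $x$ converge to $\nu(q)P(a\mid q)$'' is precisely the nontrivial step, not a consequence of the hypotheses. The state $q_{i-1}$ depends on the whole prefix, not on a bounded window, so pair-frequency convergence does not follow directly from $P$-distribution. Also, irreducibility of $P$ does \emph{not} make the product chain on $Q$ irreducible: the DFA may have several ergodic classes and transient states, so there is no single stationary $\nu$. The paper first finds a word that forces the DFA from any state of a given class into some ergodic class (Lemma~\ref{lemma3}); since a $P$-distributed $x$ contains every word of positive $P$-measure, the run on $x$ eventually enters one such class, and then an averaging argument over long blocks transfers the $P$-a.e.\ ergodic theorem for the product chain to the specific sequence $x$ (Lemma~\ref{lemalim}).

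The reverse direction has a genuine gap. Your induction tacitly needs a block $\tau$ with $|\tau|\geq k$ (so that $P(\cdot\mid\tau)$ is determined and the DFA martingale condition is satisfiable at $q^\ast$) and with $\occ(\tau,x\uh{n_k})/n_k$ bounded away from $0$ (otherwise the estimate $\log d\geq c\,n_k-o(n_k)$ fails). Irreducibility of $P$ gives only $P(\tau)>0$, not positive empirical density of $\tau$ in the particular sequence $x$, so your last paragraph's appeal to irreducibility does not close this. The paper separates two cases. Case~I assumes $\liminf_n \occ(\tau^*,x\uh{n})/n>0$ for some $\tau^*\in\Theta_k$; already here the extraction of a block $\sigma^*$ with $|\sigma^*|\geq k$, a conditional deviation, \emph{and} positive lower density is the content of Lemma~\ref{lemma1}, whose proof is not a short induction but a Perron--Frobenius argument on the transition matrix of $P$. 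Case~II, in which every $\tau\in\Theta_k$ has $\liminf_n \occ(\tau,x\uh{n})/n=0$, is absent from your sketch: no block of length $\geq k$ has positive lower density and your martingale cannot win. The paper handles this via Lemma~\ref{lemma2}, which locates $\sigma,\rho\in\Theta_k$ and a subsequence $(N_j)$ with $\occ(\rho,x\uh{N_j})/N_j\to 0$ while $\sum_{\tau\neq\rho}\occ(\sigma\tau,x\uh{N_j})/N_j\geq d>0$, and then builds a different DFA martingale that bets \emph{against} seeing $\rho$ after $\sigma$.
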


The importance of Markov measures is that they exhibit enough memorylessness to make them compatible with the memoryless structure of a DFA.

As regards $\beta$-representations, a second result by Bertrand establishes that for $\beta$ Pisot $P_\beta$, the natural measure on $\beta$-expansions, is ``hidden'' Markov. By extending Theorem~\ref{thm:main2} to hidden Markov measures we are able to construct a $P_\beta$-martingale generated by a DFA that succeeds on the $\beta$-expansion of $z$.
We use the polynomial time computability of the $\beta$-expansion and of the measure $P_\beta$ to show that an integer base (i.e.\ classical) martingale which succeeds on $z$ can be constructed from our $P_\beta$-martingale, following the same ideas used in~\cite{Fi2013}.

\subsection{Outline}

The paper is organized as follows. In \S\ref{sec:symdyn} we introduce some basics from symbolic dynamics, mainly the definition of Markov and sofic subshifts, and the notion of sequences distributed according to invariant measures $P$ over the shift. In \S\ref{sec:main} we introduce the notion of $P$-(super)martigales and show the characterization given by Theorem~\ref{thm:main2}. In \S\ref{sec:beta-exp-and-pisot} we introduce some definitions and results regarded to representation of reals in non-integer bases, in particular, Pisot bases. Finally, in \S\ref{sec:polyrndness} we put all pieces together to get Theorem~\ref{thm:main1}.

\section{Subshifts and measures}\label{sec:symdyn}

Throughout this work $\Sigma$ will denote an alphabet of finitely many symbols, which will be denoted by $a$, $b$, $c$, etc. The set of all words over the alphabet $\Sigma$ will be denoted by $\Sigma^*$, and the set of all words of length $k$ over the alphabet $\Sigma$ will be denoted $\Sigma^k$ (so $\Sigma^*=\bigcup_k\Sigma^k$).
Greek letters $\sigma$, $\tau$ and so on will be used for finite words in $\Sigma^*$
%
Letters $s$, $s'$ will be used for infinite sequences in $\Sigma^\NN$. The $i$-th symbol of the sequence $s$ will be dented $s_i$.
Concatenation will bear no special symbol, so we may write $\sigma=ab$, $s=as'$, $\rho=\sigma\tau$, etc. For a word $\sigma$ and $k\in\NN$ we denote with $\sigma^k$ the string of length $k|\sigma|$ which consists of the $k$-times repetition of $\sigma$, and with  $\sigma^\infty$ to the infinite sequence which consist of the repetition of $\sigma$ infinitely may times.
For any sequence $s\in\Sigma^\NN$ we will denote by $s\uh{N}$ the word that consists of the first $N$ symbols of $s$, and by $\langle s:k\rangle$ the same sequence $s$ when regarded as a sequence in $\Sigma^k$. For a word $\sigma$ and a non-negative integer $k$, let $\sigma\dh k$ denote the subword of $\sigma$ consisting of its last $k$ symbols (in case $l<k$ then $\sigma\dh k$ is just $\sigma$).
By $\sigma\preceq \tau$ we will denote that $\sigma$ is a prefix of
$\tau$, and by $\sigma\prec \tau$ we will denote that $\sigma$ is a
strict prefix of $\tau$. We will use the same notation ($\sigma\prec s$ and $\sigma\preceq s$) for sequences $s$.
 By $[\sigma]$ we will denote the cylinder set consisting of all infinite sequences extending $\sigma$, i.e.\ $[\sigma]=\{s\in\Sigma^\NN \colon \sigma\prec s\}$.

\begin{definition}
Given a finite alphabet $\Sigma$, a {\em subshift} is a tuple $(X,T)$ where
\begin{enumerate}
\item $X$ is some closed (hence, compact) subset of $\Sigma^\NN$ with the product topology
\item $X$ is invariant under $T$ (that is, $T(X)\subseteq X$); and
\item $T$ is the continuous mapping defined by $(T(s))_n=s_{n+1}$.
\end{enumerate}
If $X=\Sigma^{\NN}$ we say that $(X,T)$ is the full $|\Sigma|$-shift. The language associated to $(X,T)$, denoted $L(X)\subseteq \Sigma^*$, consists of all words appearing in the sequences of $X$.
\end{definition}


Notice that $L(X)$ is a factorial and prolongable language, that is, it contains all subwords of its words and, if $\sigma\in L(X)$, then there exists a non-empty word $\tau$ in $\Sigma^*$ such that $\sigma\tau\in L(X)$.
Conversely, given any language, there is a corresponding closed subset of sequences.%
For a language $L\subseteq\Sigma^*$ we define
$$
X_L=\{s\in\Sigma^\NN\colon  \forall N,\  s\uh{N}\ \in L\}.
$$
%
Observe that
$X_L$ is closed and that if $L$ is factorial then $X_L$ is shift invariant, hence, it is a subshift of $\Sigma^\NN$. Moreover, if $L$ is factorial and prolongable, then $L(X_L)=L$

\begin{definition}A $k$-{\em step Markov shift} (also known as a {\em subshift of finite type}, or {\em SFT}) is a subshift $(X,T)$ of $\Sigma^\NN$ such that there exists a set $G$ (called a \textit{grammar}) of admissible words of length $k$ satisfying
$$X=\{s\in\Sigma^A \colon (\forall i\in A)\ s_is_{i+1}\dots s_{i+k-1}\in G \}.$$
\end{definition}
These are called Markov shifts by analogy with the Markov processes of probability theory. For these, looking back at the last $k$ values of the process (say, the last $k$ flipped coins) is enough to know the probabilities of the next value (looking further backwards does not change these conditional probabilities). In the case of Markov shifts, looking at the last $k-1$ symbols is enough to know if the next symbol is admissible.

\begin{definition}
A probability measure $P$ on $\Sigma^\NN$ is called {\em $k$-step Markov} for some fixed $k\in \NN$ if for all $\sigma,\tau\in\Sigma^*$, $|\sigma|\geq k$,
%
$P([\sigma\tau]\mid [\sigma])=P([\rho\tau]\mid [\rho])$
where $\rho = \sigma\dh k$.
\end{definition}
The above condition is actually called $k$-step \textit{homogenous} Markov. A strict Markovian condition would read
$
P([\sigma\tau]\mid [\sigma])=P(T^{-(l-k)}([\rho\tau])\mid T^{-(l-k)}([\rho])).
$
 Since we will never consider non-homogenous Markov processes, we can spare the reader this extra terminology.
%

From now on we will simplify notation and write $P(\sigma)$ instead of $P([\sigma])$ for any word $\sigma\in\Sigma^*$.
%
Given a $1$-step Markov probability measure $P$ on $\Sigma^\NN$ we define its transition matrix $(p_{a,b})_{a,b\in \Sigma}$ to be
$$
p_{a,b}=P(ab\mid a).
$$
%

An {\em invariant} measure on a subshift $(X,T)$ is a probability measure $P$ on $X$ (with its Borel $\sigma$-algebra $\mathcal{B}$) such that $P\circ T^{-1}=P$.
%
Notice that, by definition of $T$, $P\circ T^{-1}(\sigma)=\sum_{a\in\Sigma}P(a\sigma)$ for words $\sigma$, and invariance need only be checked for such word cylinders.

Let $P$ be a 1-step Markov measure on $\Sigma^\NN$ with transition matrix $M=(p_{a,b})_{a,b\in\Sigma}$. Define the vector $v\in\RR^{\Sigma}$, $v_a=P(a)$. Then $P$ is invariant if and only if $v$ is a left eigenvector of $M$.
%
Let $P$ be a $k$-step Markov on $\Sigma^\NN$. Let $\Theta_k=\{\tau\in\Sigma^k\colon  P(\tau)>0\}$. Then $P$ induces a 1-step Markov measure $P^k$ on $\Theta_k^\NN$ with transition matrix $(p^k_{\sigma,\tau})_{\sigma,\tau\in\Theta_k}=P(\sigma\tau\mid\sigma)$.

A probability measure $P$ on $\Sigma^\NN$ is called \textit{irreducible} if for any words $\sigma,\tau$ such that $P(\sigma)>0$, $P(\tau)>0$ there is some word $\rho$ such that $P(\sigma\rho\tau)>0$.
%
A nonnegative $n\times n$ matrix $A$ is {\em irreducible} when the associated directed graph $G_A$, which has $n$ nodes and in which there is an edge from node $i$ to node $j$ if and only if $A_{ij}>0$, is strongly connected. A $1$-step Markov measure is irreducible if its transition matrix is irreducible, a $k$-step invariant Markov measure is irreducible if the matrix $(p^k_{\sigma,\tau})_{\sigma,\tau\in\Theta_k}$ is irreducible.

%
The following is the Perron-Frobenius Theorem for Markov chains in the finite state case (see \cite[Theorem 1.3.5]{Kitchens98} and \cite[Theorems 1.7.5-7 and Exercise 1.7.5]{Norr97}):
\begin{theorem}\label{perronfrobenius}Let $P$ and $P'$ be two invariant, irreducible 1-step Markov measures on $\Sigma^\NN$ such that their transition matrices are the same. Then $P=P'$.
\end{theorem}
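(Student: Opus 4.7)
The plan is to reduce the statement to the uniqueness of the stationary distribution of a stochastic irreducible matrix, which is the content of the classical Perron--Frobenius theorem. First, I would observe that a $1$-step Markov measure $P$ on $\Sigma^\NN$ is completely determined by its transition matrix $M=(p_{a,b})$ together with its initial distribution $v\in\RR^\Sigma$ given by $v_a=P(a)$. Indeed, for any word $\sigma=a_0a_1\cdots a_n$, the Markov property and an easy induction yield
\[
P(\sigma)=v_{a_0}\,p_{a_0,a_1}\,p_{a_1,a_2}\cdots p_{a_{n-1},a_n}.
\]
Therefore, since $P$ and $P'$ share the transition matrix $M$ by hypothesis, it suffices to prove that their initial distributions $v$ and $v'$ coincide.

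Next, I would use the observation stated earlier in the excerpt: invariance of $P$ under the shift is equivalent to $v$ being a left eigenvector of $M$ with eigenvalue $1$, i.e.\ $vM=v$. The same holds for $v'$. Both $v$ and $v'$ are furthermore probability vectors, so $\sum_a v_a=\sum_a v'_a=1$. Thus the problem becomes: show that $M$ admits a unique stationary probability distribution.

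For this I would invoke the Perron--Frobenius theorem for irreducible nonnegative matrices. Since $M$ is row-stochastic, $1$ is an eigenvalue (with the all-ones vector as a right eigenvector), and the spectral radius of $M$ is exactly $1$. Irreducibility of $M$ then ensures that the eigenvalue $1$ is simple, so the left eigenspace corresponding to $1$ is one-dimensional. Imposing the normalization $\sum_a v_a=1$ pins down a unique probability vector in this eigenspace, forcing $v=v'$ and hence $P=P'$ on every cylinder, which extends to all Borel sets by a standard monotone class argument.

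The only nontrivial step is the simplicity of the Perron eigenvalue under irreducibility, and this is precisely what the cited references (Kitchens, Norris) establish; beyond that everything is bookkeeping on cylinder probabilities. A minor subtlety worth double-checking is that irreducibility of $M$ in the matrix sense (strong connectivity of the associated digraph) matches the definition of irreducibility of the measure given in the excerpt; this is immediate from the formula for $P(\sigma)$ above, since $P(a_0\cdots a_n)>0$ iff there is a path $a_0\to a_1\to\cdots\to a_n$ in the digraph of $M$ with $v_{a_0}>0$, and the Perron eigenvector is strictly positive on all states reachable by the irreducibility.
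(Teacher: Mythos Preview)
Your proposal is correct and is precisely the standard argument: reduce to uniqueness of the stationary probability vector for an irreducible stochastic matrix via Perron--Frobenius. The paper does not supply its own proof of this theorem; it simply states it as the Perron--Frobenius Theorem for finite-state Markov chains and cites \cite[Theorem 1.3.5]{Kitchens98} and \cite[Theorems 1.7.5--7 and Exercise 1.7.5]{Norr97}, whose content is exactly what you sketched. One small simplification: in the paper, a $1$-step Markov measure is \emph{defined} to be irreducible when its transition matrix is irreducible, so the consistency check you flag at the end is unnecessary here.
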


Given two subshifts $(X,T)$ and $(X',T)$ with $X\subseteq \Sigma^A,X'\subseteq \Sigma'^A$, a {\em factor map} is an onto map $\psi:X\rightarrow X'$ which commutes with the shift operator, that is, $\psi\circ T=T\circ \psi$.
Markov shifts are not closed under factor maps, but the following class of subshifts is.

\begin{definition}
A {\em sofic subshift} is the image of a Markov shift under a factor map.
\end{definition}


\begin{example}\label{examplesofic}
Let us consider $X$ to be the 2-step Markov shift on $\{0,1\}^\NN$ with grammar $G=\{00,10,01\}$. For each $s$ in $X$, let $\psi(s)$ be such that $(\psi(s))_i=0$ if $s_i=s_{i+1}=0$ and $(\psi(s))_i=1$ otherwise. The image of $\psi$ is the set of infinite sequences such that all blocks of consecutive 1's are of even length (blocks of 0's are of arbitrary length). This corresponds to the regular expression $((11)^*0^*)^*$. Notice that this is not a Markov shift, since no matter how big $k$ is, looking back at the last $k$ values is not enough to determine whether a 0 is admissible next.
\end{example}

\begin{definition}
Given a subshift $(X,T)$, $s\in X$ and an invariant measure $\mu$ on $X$, we will say $s$ is {\em $\mu$-distributed} if for all continuous $f\colon X\rightarrow \RR$ we have
\begin{equation*}
\lim_{N\to\infty}\frac{\sum^{N-1}_{n=0}f(T^ns)}{N}= \int f d\mu.
\end{equation*}
\end{definition}
Notice that the above condition need only be checked on the characteristic functions of word cylinders (this is because characteristic functions of cylinders are dense in $C(\Sigma^\NN)$, since they form an algebra that separates points).  Then it is immediate that if $X=X_k$, the full $k$-shift for some integer $k>1$, and $\mu$ is the uniform or Lebesgue measure on $X$ with $\mu(i)=k^{-1}$ for $i\in\Sigma$, then $s$ is $\mu$-distributed if and only if the real number $\sum_{j>0}s_jk^{-j}$ is normal in base $k$.

There is a notion of entropy for dynamical systems called {\em metric entropy} or {\em Kolmogorov-Sinai entropy}, which is a natural extension of the Shannon entropy, and which assigns an entropy value $h_\mu(X)$ to any invariant measure $\mu^*$ on a system $X$. A measure $\mu^*$ has maximal entropy if $h_{\mu^*}(X)\geq h_\mu(X)$ for all invariant measures $\mu$ on $X$.
An important result concerning invariant measures for Markov shifts is the following, due to Parry~\cite{Parry64}:\footnote{An earlier and independent proof, in a somewhat different language, was already formulated in~\cite{Shannon48}.}

\begin{theorem}\label{teoparry2} Given an irreducible Markov shift $(X,T)$ with a grammar of wordlength $k-1$, there is a unique invariant probability measure $\tilde{P}$ on $X$ of maximal metric entropy.
Moreover, this measure is $k$-step Markov.
\end{theorem}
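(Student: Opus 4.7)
The plan is to reduce the $(k-1)$-grammar case to the $1$-step Markov case by the standard higher-block recoding, build the candidate measure from Perron--Frobenius data, and then argue maximality and uniqueness via entropy inequalities. First I would take as new alphabet $\Sigma'$ the set of $(k-2)$-words in $L(X)$, and code $s\in X$ as $\phi(s)=(s_n s_{n+1}\dots s_{n+k-3})_{n\in\NN}\in\Sigma'^\NN$. The image $(X',T)$ is a $1$-step Markov shift whose admissibility is captured by a $\{0,1\}$-matrix $A$ indexed by $\Sigma'$, with $A_{\sigma,\tau}=1$ iff $\sigma$ and $\tau$ overlap consistently and the resulting length-$(k-1)$ word lies in the original grammar. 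The map $\phi$ is a topological conjugacy, irreducibility of $(X,T)$ transfers to irreducibility of $A$, and invariant $1$-step Markov measures on $X'$ pull back to invariant $(k-2)$-step, hence $k$-step, Markov measures on $X$. It thus suffices to prove the theorem for a $1$-step irreducible Markov shift.

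Next, I would invoke Perron--Frobenius for the irreducible nonnegative matrix $A$: there is a unique largest eigenvalue $\lambda>0$ with positive left and right eigenvectors $u,v$, which I normalise so that $\sum_\sigma u_\sigma v_\sigma = 1$. Set
$$ p_{\sigma,\tau}=\frac{A_{\sigma,\tau}\,v_\tau}{\lambda v_\sigma}, \qquad \pi_\sigma = u_\sigma v_\sigma. $$
Row-stochasticity of $(p_{\sigma,\tau})$ follows from $Av=\lambda v$, and stationarity of $\pi$ under it follows from $uA=\lambda u$. Together they determine an invariant $1$-step Markov measure $P'$ on $X'$, whose pullback $\tilde P = P'\circ\phi$ is an invariant $k$-step Markov measure on $X$. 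A direct computation using the eigenvector identities gives $h_{P'}(X')= -\sum_{\sigma,\tau}\pi_\sigma p_{\sigma,\tau}\log p_{\sigma,\tau} = \log\lambda$, which matches the topological entropy of $X'$, equal to the log spectral radius of $A$.

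The main obstacle is showing that no other invariant measure attains entropy $\log\lambda$. The variational principle already gives $h_\mu\leq h_{\mathrm{top}}(X')=\log\lambda$ for every invariant $\mu$ on $X'$. For the equality case I would pass to an ergodic component of $\mu$ and compare $h_\mu$ with the entropy of the $1$-step Markov approximation that shares $\mu$'s pair-cylinder probabilities; the strict concavity of $x\mapsto -x\log x$ in the Jensen step forces the induced transitions $q_{\sigma,\tau}=\mu([\sigma\tau])/\mu([\sigma])$ to coincide with the $p_{\sigma,\tau}$ above whenever $h_\mu=\log\lambda$. Hence any maximiser $\mu$ is an irreducible invariant $1$-step Markov measure with the same transition matrix as $P'$, so Theorem~\ref{perronfrobenius} yields $\mu=P'$. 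Transporting back through $\phi$ gives uniqueness of $\tilde P$ together with its $k$-step Markov property, completing the plan.
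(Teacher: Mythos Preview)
The paper does not prove Theorem~\ref{teoparry2}; it is quoted as a classical result of Parry~\cite{Parry64} (with a footnote crediting Shannon for an earlier formulation), and no argument is supplied. Your proposal is therefore not competing with a proof in the paper but with the standard textbook argument, and it reproduces that argument correctly: higher-block recoding to a $0$--$1$ adjacency matrix $A$, Parry's construction $p_{\sigma,\tau}=A_{\sigma,\tau}v_\tau/(\lambda v_\sigma)$ from the Perron--Frobenius eigendata, the computation $h_{P'}=\log\lambda$, and uniqueness via the relative-entropy identity
\[
h_q=\log\lambda-\sum_\sigma\pi_\sigma\sum_\tau q_{\sigma,\tau}\log\frac{q_{\sigma,\tau}}{p_{\sigma,\tau}},
\]
valid for any invariant Markov measure with transitions $q$ supported on $A$.

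One step in your sketch deserves an extra sentence. Strict concavity forces $q_{\sigma,\tau}=p_{\sigma,\tau}$, which settles uniqueness \emph{among Markov measures}; to conclude that an arbitrary maximiser $\mu$ is itself Markov you also need the equality case of $h_\mu\le h_{\mu^{(1)}}$, where $\mu^{(1)}$ is the Markov measure sharing $\mu$'s two-block marginals. Since $h_\mu=\inf_n H_\mu(X_0\mid X_{-1},\dots,X_{-n})$ while $h_{\mu^{(1)}}=H_\mu(X_0\mid X_{-1})$, equality forces all these conditional entropies to coincide, which is exactly the $1$-step Markov property for $\mu$. With that in hand your appeal to Theorem~\ref{perronfrobenius} closes the argument. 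Two minor remarks: your recoding by $(k-2)$-blocks tacitly assumes $k\ge3$ (the case $k\le2$ is a full shift on a subalphabet and is trivial), and what you call a ``$1$-step Markov shift'' is, in the paper's convention, a shift with grammar of wordlength~$2$; neither affects the mathematics.
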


\section{$P$-martingales and $P$-distributed sequences}\label{sec:main}

In the algorithmic randomness literature, given a martingale $f$ on $\Sigma^*$,
one often constructs a (semi)measure
$
\mu_f(\sigma)=f(\sigma)|\Sigma|^{-|\sigma|},
$
which may be alternatively written as
\begin{equation}\label{martmeasure}
\mu_f(\sigma)=f(\sigma)\lambda([\sigma]),
\end{equation}
where $\lambda$ is the Lebesgue or uniform measure on $\Sigma^\NN$, which is taken to be the natural or ``fair'' measure on sequences of digits.

As was hinted in the introduction and by the mention of Theorem~\ref{teoparry2}, we will be interested in measures different from Lebesgue, i.e.\ we would like to substitute some arbitrary $P$ for $\lambda$ in the right hand side of~\eqref{martmeasure}. This forces us to change the definition of a martingale $f$, if we still want to make $\mu_f$ an additive measure.
%
Given an alphabet $\Sigma$ and a language $L\subseteq \Sigma^*$, a probability measure $P$ on $\Sigma^\NN$ is called \textit{L-supported} if
$
P(\sigma)=0\ \Leftrightarrow\ \sigma\in\Sigma^*\setminus L.
$
Equivalently, $P$ has full support on $X_L$.
%
%
\begin{definition}\label{def:supermartingale}Given an alphabet $\Sigma$, a language $L\subseteq \Sigma^*$ and some $L$-supported probability measure $P$ on $\Sigma^\NN$, a {\em $P$-supermartingale} on $L$ is a function $f\colon L\rightarrow \RR$ satisfying
\begin{equation}\label{eqn:supermartingale-condition}
f(\sigma)\geq\sum_{\substack{a\in\Sigma\\ \sigma a\in L}}P(\sigma a \mid \sigma)f(\sigma a).
\end{equation}
for all $\sigma$ in $L$.
The function $f$ is called a {\em $P$-martingale} if the above inequality can be replaced by an equality for all $\sigma\in L$.
We say that $f$ {\em succeeds} on $s\in\Sigma^\NN$ if
$\limsup_Nf(s\uh{N})=\infty$. The ratios $f(\sigma a)/f(\sigma)$ are called \textit{betting factors} of $f$.
\end{definition}
Notice that the conditional probabilities in~\eqref{eqn:supermartingale-condition} are always well-defined since $P$ is $L$-supported and $\sigma\in L$.
Of course, when $P$ is $\lambda$ as in Definition~\ref{def:supermartingale}, the classical definition of a martingale is recovered, since $\lambda(\sigma a\mid \sigma)=\lambda(a)=|\Sigma|^{-1}$. This generalized definition is somewhat more intuitive in the sense that it makes explicit the real-life fact that the odds offered by a bookie at some gamble are the inverse of some implied probability (conditional on the available information) on the outcomes of the gamble.
Classical martingales then just capture the case when these probabilities are uniform and independent of previous outcomes.


We define now the notion of $P$-martingale generated by a deterministic finite automaton (DFA). This is a generalization of the notion of a classical betting strategy generated by a DFA, introduced in~\cite{Sch72}.
We will write automata in the usual form $M=\langle Q,\Sigma,\delta,q_0,Q_f\rangle$, where $Q$ is a finite set of states, $\Sigma$ is the input alphabet, $\delta$ is the transition function, $q_0$ is the initial state and $Q_f\subseteq Q$ is the set of accepting states. Also, we will use the notation $\delta^*$ for the natural extension of the transition function $\delta$ from symbols to words in $\Sigma$.
\begin{definition}A $P$-martingale $f$ on a language $L$ is \textit{generated by a DFA}  if there is a DFA $M$ accepting $L$, and a function $b\colon Q\times \Sigma\rightarrow \RR$ such that
$$
f(\sigma a)=b(\delta^*(\sigma,q_0),a)f(\sigma)
$$
for any word $\sigma$ and symbol $a$ such that $\sigma a\in L$.
\end{definition}

The main result of this section is that any sequence is distributed according to an irreducible, invariant Markov measure $P$ if an only if no $P$-martingale generated by a DFA succeeds on it. The rest of the section is devoted to show it. In \S\ref{sec:mart-can-beat} we show the `if' implication and in \S\ref{sec:mart-cannot-beat} we show the `only if' implication. For the case of $P$ being a measure on a sofic shift, we extend the `if' direction in \S\ref{sec:extension-to-sofic}. This generalization will be needed for \S\ref{sec:polyrndness}.

\subsection{$P$-martingales on a DFA can beat sequences that are not $P$-distributed}\label{sec:mart-can-beat}

\begin{theorem}\label{maintheorem}
Let $\Sigma$ be an alphabet, $(X,T)$ a subshift of $\Sigma^\NN$, and let $P$ be a $L(X)$-supported $k$-step Markov invariant measure on $\Sigma^\NN$ such that $(p_{\sigma,\tau}^k)_{\sigma,\tau\in\Theta_k}$, the Markov transition matrix induced on $\Theta_k^\NN$, is irreducible. Suppose $s\in X$ is not $P$-distributed. Then there is a $P$-martingale generated by a DFA which succeeds on $s$.
Moreover, the only betting factors of this martingale are 1, $(1+\delta)$ and $(1-\delta p^*/(1-p^*))$, where $\delta$ is rational and $p^*=P(\tau\rho\mid\tau)$ or $p^*=1-P(\tau\rho\mid\tau)$ for some $\tau,\rho\in\Sigma^*$.
\end{theorem}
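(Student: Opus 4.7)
The plan is to locate a conditional-frequency discrepancy along $s$ and to bet on it via a DFA that remembers just enough history to isolate the relevant state.

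First, I produce the pair $(\tau,a)$. Since $s$ is not $P$-distributed and cylinder indicators are uniformly dense in $C(X)$, there is some word $w\in L(X)$ with $\nu_N(w)\not\to P(w)$. Using invariance of $P$ together with the identity $\nu_N(w)=\sum_{c\in\Sigma}\nu_N(wc)+O(1/N)$, such failures propagate to all longer lengths, so some failure appears at length $k'+1$ for every sufficiently large $k'\ge k$. Fix such a $k'$ and pass to a subsequence $(N_j)$ along which the empirical measure on $\Theta_{k'}$ converges to some $\pi$ and all conditional frequencies from states in $\operatorname{supp}(\pi)$ also converge. If all those conditionals matched the true $P$-transitions, then $\pi$ would be a stationary distribution for the 1-step Markov chain induced on $\Theta_{k'}$, and Perron--Frobenius uniqueness (Theorem~\ref{perronfrobenius}) would force $\pi=P$; iterating this argument over growing $k'$ would make $s$ $P$-distributed, a contradiction. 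Hence there exist $\tau\in\Theta_{k'}$ and $a\in\Sigma$ with $\pi(\tau)>0$, so $\nu_{N_j}(\tau)$ stays bounded below, and $\nu_{N_j}(\tau a)/\nu_{N_j}(\tau)\to q$ for some $q\ne p^*:=P(\tau a\mid\tau)$.

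Next I build the DFA $M$ whose states are the words of $L(X)$ of length at most $k'$, with transitions by shift-and-append truncated to length $k'$, and which accepts $L(X)$ (regular, since a $k$-step Markov $L(X)$-supported measure makes $X$ a Markov shift). I choose a rational $\delta$ of the right sign so that $g(\delta):=q\log(1+\delta)+(1-q)\log(1-\delta p^*/(1-p^*))>0$; this is possible because $g(0)=0$ and $g'(0)=(q-p^*)/(1-p^*)\ne 0$, so $g$ is strictly positive on a one-sided neighborhood of $0$. Define $b(q_\sigma,c)=1+\delta$ when $\sigma=\tau$ and $c=a$, $b(q_\sigma,c)=1-\delta p^*/(1-p^*)$ when $\sigma=\tau$ and $c\ne a$, and $b(q_\sigma,c)=1$ otherwise. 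Fairness at state $q_\tau$ reduces to the algebraic identity $p^*(1+\delta)+(1-p^*)(1-\delta p^*/(1-p^*))=1$, which applies because any $\sigma$ ending in $\tau$ satisfies $|\sigma|\ge k'\ge k$ and so $P(c\mid\sigma)=P(c\mid\tau)$ by the $k$-step Markov property; fairness at other states is automatic since the only factor is $1$.

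Success is then a direct computation. Letting $n_N$ count visits to $q_\tau$ while processing $s\uh N$ and $m_N$ count those visits followed by symbol $a$, the multiplicative structure of $f$ gives $\log f(s\uh N)=m_N\log(1+\delta)+(n_N-m_N)\log(1-\delta p^*/(1-p^*))$. Along $(N_j)$ one has $n_{N_j}/N_j\to\pi(\tau)>0$ and $m_{N_j}/n_{N_j}\to q$, hence $\log f(s\uh{N_j})\sim n_{N_j}\,g(\delta)\to+\infty$ and $\limsup_N f(s\uh N)=\infty$. The betting factors used are precisely $1$, $1+\delta$ and $1-\delta p^*/(1-p^*)$ with $\delta\in\QQ$ and $p^*=P(\tau a\mid\tau)$, matching the moreover clause (the alternative form $p^*=1-P(\tau\rho\mid\tau)$ is the reparametrization describing the same strategy as a wager against~$a$).

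I expect the main obstacle to be Step~1: guaranteeing that the failing state $\tau$ has $\pi(\tau)>0$ rather than vanishing empirical mass. Irreducibility and Perron--Frobenius are essential here, together with compactness on the simplex of probability measures on $\Theta_{k'}$ and a diagonal argument to upgrade to a subsequence on which all relevant conditional frequencies converge simultaneously. A secondary delicate point is that the required history length $k'$ may strictly exceed $k$ -- for example, $(0011)^\infty$ with the uniform Bernoulli measure as $P$ is not $P$-distributed yet needs $k'=2$ to detect the failure at length~$3$ -- so the DFA size cannot be bounded a priori in terms of $|\Theta_k|$; nevertheless, $k'$ is finite for any given $s$ and the automaton remains finite.
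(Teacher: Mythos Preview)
Your argument is correct and takes a genuinely different route from the paper. The paper splits into two cases according to whether some $\tau^*\in\Theta_k$ has positive \emph{full} lower density $m_s(\tau^*)>0$. In Case~I it proves, via an intricate matrix argument (Lemma~\ref{lemma1} and its Propositions~\ref{prop:lemma1-step1}--\ref{prop:M-is-P}), that there is $\sigma^*$ with $m_s(\sigma^*)>0$ and a failing conditional frequency, and bets on a single symbol after $\sigma^*$. In Case~II, where every $\tau\in\Theta_k$ has $m_s(\tau)=0$, a separate combinatorial lemma (Lemma~\ref{lemma2}) finds $\sigma,\rho\in\Theta_k$ and a subsequence along which $\rho$ has vanishing density but $\sigma$ does not; the martingale bets \emph{against} the $k$-block $\rho$ from state $\sigma$ (this is where the form $p^*=1-P(\sigma\rho\mid\sigma)$ enters), first over the alphabet $\Theta_k$ and then lifted to $\Sigma$. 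You unify the two cases by working along a convergent subsequence from the outset: if all conditionals out of $\operatorname{supp}(\pi)$ matched $P$, then $\pi$ would be stationary for the irreducible induced chain on $\Theta_{k'}$ and hence equal to $P$ by Perron--Frobenius, contradicting the failure at $w$. This is cleaner and sidesteps the case split; the price is that you obtain only subsequential positivity $\pi(\tau)>0$ rather than the paper's full $m_s(\sigma^*)>0$, but that suffices since success only requires $\limsup f=\infty$. One expository note: the phrase ``iterating this argument over growing $k'$'' is a red herring---the contradiction is already obtained at the fixed $k'$, since $\pi=P\vert_{\Theta_{k'}}$ together with matching conditionals forces $\nu_{N_j}(w)\to P(w)$ directly.
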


Before proceeding to the proof of the theorem we present some useful notation and auxiliary lemmas.
For words $\sigma,\tau\in \Sigma^*$, we let $\occ(\tau,\sigma)$ be the number of occurrences of $\tau$ in $\sigma$, that is
$$
\occ(\tau,\sigma)=\vert\{i\colon  0\leq i\leq |\sigma|-|\tau|,\tau=\sigma_{i}\dots\sigma_{i+|\tau|-1}\}\vert.
$$

For $k$ an integer, $P$ a measure on $\Sigma^\NN$, $\sigma\in\Sigma^*$ and $A\subseteq \Sigma^*$ we write
$$
\Prec_k(\sigma)=\{\tau\in\Sigma^k \colon \ P(\tau\sigma)>0\}, \mbox{\qquad and\qquad}
\Prec_k(A)=\bigcup_{\sigma\in A}\Prec_k(\sigma).
$$
%
We will also make use of the following functions $M_s$ and $m_s$ defined on $\Sigma^*$
%
$$
M_s(\sigma)=\limsup_{N\to\infty}\frac{\occ(\sigma,s\uh{N})}{N},\mbox{\qquad and\qquad}
m_s(\sigma)=\liminf_{N\to\infty}\frac{\occ(\sigma,s\uh{N})}{N},
$$
for some fixed $s\in\Sigma^\NN$.
%
%
%
The subscript $s$ will often be omitted from $M_s$ and $m_s$ when it is understood from context.

For the sake of simplicity, since the step $k$ is fixed, we will write $p_{\sigma,\tau}=p_{\sigma,\tau}^k$. To prove Theorem~\ref{maintheorem} we will first need some auxiliary lemmas. For the rest of this section, the measure $P$ is assumed to satisfy the conditions of Theorem~\ref{maintheorem}.

\begin{lemma}\label{lemma1} Suppose $s\in X$ is not $P$-distributed and that
$m_s(\tau^*)\neq 0$
for some $\tau^*\in\Theta_k$.
Then there is some $\sigma^*\in L(X)$ with $|\sigma^*|\geq k$ and $b\in\Sigma$  such that
\begin{equation}\label{cond1}
\frac{\occ(\sigma^* b,s\uh{N})}{\occ(\sigma^*,s\uh{N})}\not\to P(\sigma^* b\mid \sigma^*).
\end{equation}
when $N\to\infty$. Moreover, $\sigma^*$ can be chosen so that $
m_s(\sigma^*)>0.
$
\end{lemma}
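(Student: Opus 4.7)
The plan is to argue by contradiction. I would suppose that for every $\sigma \in L(X)$ with $|\sigma| \geq k$ and $m_s(\sigma) > 0$, and every $b \in \Sigma$ with $\sigma b \in L(X)$, the limit $\lim_N \occ(\sigma b, s\uh{N})/\occ(\sigma, s\uh{N})$ exists and equals $P(\sigma b \mid \sigma)$; the goal is then to deduce that $s$ is $P$-distributed, contradicting the lemma's hypothesis.

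The first move is to propagate positive lower density. Using the factorisation $\occ(\sigma b, s\uh{N})/N = [\occ(\sigma b, s\uh{N})/\occ(\sigma, s\uh{N})] \cdot [\occ(\sigma, s\uh{N})/N]$ together with the assumed convergence of the first factor, a one-symbol induction gives $m_s(\sigma b) = m_s(\sigma)\,P(\sigma b \mid \sigma)$ and $M_s(\sigma b) = M_s(\sigma)\,P(\sigma b \mid \sigma)$ whenever $m_s(\sigma) > 0$. Iterating from $\tau^*$ yields $m_s(\tau^*\eta) = m_s(\tau^*)\,P(\tau^*\eta)/P(\tau^*) > 0$ for every admissible extension. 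To extend positivity to arbitrary $\sigma \in L(X)$ with $|\sigma| \geq k$, let $\sigma'$ denote its first $k$ symbols: irreducibility of $(p^k_{\sigma,\tau})$ furnishes a connecting word $\xi$ with $\tau^*\xi\sigma' \in L(X)$, and the $k$-step Markov property upgrades this to $\tau^*\xi\sigma \in L(X)$; since every occurrence of $\tau^*\xi\sigma$ induces an occurrence of $\sigma$, we conclude $m_s(\sigma) \geq m_s(\tau^*\xi\sigma) > 0$.

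With $m_s(\sigma) > 0$ established for every $\sigma \in L(X)$ of length at least $k$, the contradiction hypothesis becomes broadly applicable, and I would next pass to a weak-$\ast$ limit point $\mu$ of the empirical measures $\mu_N = \frac{1}{N}\sum_{n<N}\delta_{T^n s}$, which is automatically shift-invariant and supported on $X$. Passing to the limit in the ratio identity along the convergent subsequence gives $\mu(\sigma b) = \mu(\sigma)\,P(\sigma b \mid \sigma)$ for all such $\sigma$ and admissible $b$ (trivially when $\mu(\sigma) = 0$); iterating and using the $k$-step Markov structure of $P$ yields $\mu(\sigma\eta) = \mu(\sigma)\,P(\sigma\eta)/P(\sigma)$ for every $\sigma \in \Theta_k$ and admissible extension $\eta$.

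Applying invariance of $\mu$ $k$ times and summing over length-$k$ prefixes finally gives $\mu(\tau) = \sum_{\sigma \in \Theta_k} \mu(\sigma)\,p^k_{\sigma,\tau}$ for each $\tau \in \Theta_k$, exhibiting $(\mu(\sigma))_{\sigma \in \Theta_k}$ as a probability stationary distribution of the irreducible matrix $p^k$. Theorem~\ref{perronfrobenius} then forces $\mu(\sigma) = P(\sigma)$ on $\Theta_k$, hence $\mu = P$ on all cylinders by the previous display. Since every weak-$\ast$ limit point of $\mu_N$ coincides with $P$, we have $\mu_N \to P$ weakly, meaning $s$ is $P$-distributed --- the desired contradiction. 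The main obstacle I anticipate is the irreducibility bootstrap in the second paragraph: without upgrading $m_s(\tau^*) > 0$ to $m_s(\sigma) > 0$ for all sufficiently long $\sigma \in L(X)$, the contradiction hypothesis would apply to too few words to identify $\mu$ with $P$ globally.
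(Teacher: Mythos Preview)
Your argument is correct, but it follows a genuinely different route from the paper's.

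The paper proceeds in two separate stages. First, it negates only the weaker conclusion (existence of some $\sigma^*$ without the condition $m_s(\sigma^*)>0$), and then works hard to show directly that the limits $\lim_N \occ(\tau,s\uh{N})/N$ actually exist for every $\tau\in\Theta_k$: this is Proposition~\ref{prop:lemma1-step2}, established via a matrix argument showing that $A=\textbf{id}-R^*$ is invertible, where $R^*$ is the transition matrix restricted to $\Theta_k\setminus\{\tau^*\}$. Only after this does the paper identify the resulting limit measure with $P$ via Theorem~\ref{perronfrobenius}, and the ``moreover'' clause is handled afterwards by a separate irreducibility chase.

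You instead negate the full conclusion (including $m_s(\sigma^*)>0$) at the outset, bootstrap positive lower density from $\tau^*$ to every long admissible word via irreducibility, and then replace the paper's matrix-invertibility step entirely by a soft weak-$*$ compactness argument: any subsequential limit $\mu$ of the empirical measures inherits the conditional identities, is stationary for $p^k$, and hence equals $P$ by Theorem~\ref{perronfrobenius}; uniqueness of the limit point then forces convergence. Your bootstrap step is essentially the paper's argument for the ``moreover'' clause, promoted to the front of the proof so that the contradiction hypothesis becomes globally applicable.

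What each buys: the paper's approach is more elementary and self-contained, giving explicit convergence without appealing to compactness of probability measures. Your approach is shorter and conceptually cleaner, sidestepping the delicate invertibility computation of Proposition~\ref{prop:lemma1-step2} and handling both conclusions of the lemma in a single pass, at the price of invoking the weak-$*$ machinery.
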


\begin{proof}
Let $s\in X$ not be $P$-distributed, and let $M=M_s$ and $m=m_s$.
Let us define
$$r_{\sigma,\tau}=\lim_{N\to\infty}\frac{\occ(\sigma\tau,s\uh{N})}{\occ(\sigma,s\uh{N})}$$
for any words $\sigma,\tau\in\Sigma^*$, whenever the limit exists.

The proof follows by contradiction, so let us assume that for all words $\sigma$  with $|\sigma|\geq k$ and $P(\sigma)>0$, and any $b\in\Sigma$ we have
%
$
r_{\sigma,b}= P(\sigma b\mid \sigma).
$


\medskip


\begin{proposition}\label{prop:lemma1-step1}
For all words $\sigma$ with $|\sigma|\geq k$, $P(\sigma)>0$, and any word $\tau=b_1\dots b_m\in\Sigma^*$ we have
$
r_{\sigma,\tau}= P(\sigma \tau\mid \sigma).
$
\end{proposition}

\begin{proof}
Given $\tau$, we first take the largest $j$ such that $P(\sigma b_1\dots b_{j-1})>0$. Then, by an iterated use of $r_{\sigma,b}= P(\sigma b\mid \sigma)$ we get
\begin{eqnarray*}
r_{\sigma, b_1\dots b_j} &=& \prod_{i=1}^{j-1} r_{\sigma b_1\dots b_i, b_{i+1}}=\prod_{i=1}^{j-1} P(\sigma b_1\dots b_{i+1}\mid \sigma b_1\dots b_i)=P(\sigma b_1\dots b_j\mid \sigma)
\end{eqnarray*}
If $j=m$ we are done. 
Otherwise, we have
$$0=P(\sigma b_1\dots b_{j+1})=P(\sigma b_1\dots b_{j+1}\mid \sigma b_1\dots b_j)=r_{\sigma b_1\dots b_j, b_{j+1}}.$$
Notice that
$$0=r_{\sigma b_1\dots b_j, b_{j+1}}=\lim_{N\to\infty}\frac{\occ(\sigma b_1\dots b_{j+1},s\uh{N})}{\occ(\sigma b_1\dots b_j,s\uh{N})}\geq \limsup_{N\to\infty}\frac{\occ(\sigma b_1\dots b_m,s\uh{N})}{\occ(\sigma,s\uh{N})},$$
and hence $=r_{\sigma,b_1\dots b_m}$ exists and is equal to 0. Then
 $$r_{\sigma, b_1\dots b_m}=0=P(\sigma b_1\dots b_{j+1})\geq P(\sigma b_1\dots b_m)\geq 0$$
implies $P(\sigma\tau\mid\sigma)=0$, which finishes our proof of Proposition~\ref{prop:lemma1-step1}.
\end{proof}

\begin{proposition}\label{prop:lemma1-step2}
For any $\tau\in\Theta_k$ we have
\begin{equation}\label{limdem}
M(\tau)=\lim_{N\to\infty}\frac{\occ(\tau,s\uh{N})}{N}.
\end{equation}
\end{proposition}
\begin{proof}
Fix some $\tau^*\in\Theta_k$ such that $m(\tau^*)\neq 0$ and let $\tau_1\dots\tau_\ell$ be an enumeration of all the other words in $\Theta_k$. It should be noted that if we define $\Theta_k^M=\{\tau\in\Sigma^k\mid\ M(\tau)>0\}$, then from Proposition~\ref{prop:lemma1-step1} and the fact that $P$ is $L(X)$-supported and $s\in X$ it is easy to deduce that $\Theta_k^M\subseteq\Theta_k$. This fact will be implicit in the following calculations. Then, for any $i\leq \ell$,
\begin{align}\label{eqgrosa}
\limsup_{N\to\infty}\frac{\occ(\tau_i,s\uh{N})}{\occ(\tau^*,s\uh{N})}&=\limsup_{N\to\infty}\sum_{\tau\in\Theta_k}\frac{\occ(\tau\tau_i,s\uh{N})}{\occ(\tau^*,s\uh{N})}\nonumber\\
&=p_{\tau^*,\tau_i}+\limsup_{N\to\infty}\sum_{j=1}^\ell\frac{\occ(\tau_j\tau_i,s\uh{N})}{\occ(\tau_j,s\uh{N})}\frac{\occ(\tau_j,s\uh{N})}{\occ(\tau^*,s\uh{N})}\nonumber\\
&\leq p_{\tau^*,\tau_i}+\sum_{j=1}^\ell p_{\tau_j,\tau_i}\limsup_{N\to\infty}\frac{\occ(\tau_j,s\uh{N})}{\occ(\tau^*,s\uh{N})}.
\end{align}

Notice that
$$\limsup_{N\to\infty}\frac{\occ(\tau_i,s\uh{N})}{\occ(\tau^*,s\uh{N})}\leq\limsup_{N\to\infty}\frac{\occ(\tau_i,s\uh{N})}{N}\left(\liminf_{N\to\infty}\frac{\occ(\tau^*,s\uh{N})}{N}\right)^{-1}=\frac{M(\tau_i)}{m(\tau^*)}<\infty.$$
Hence can write
$$x_i=\limsup_{N\to\infty}\frac{\occ(\tau_i,s\uh{N})}{\occ(\tau^*,s\uh{N})}$$
and $x=(x_1,\dots,x_\ell)$, and reformulate~\eqref{eqgrosa} in matrix form as follows
\begin{equation}\label{eqmatrixsup}
(\textbf{id}-R^*)x\leq p^*,
\end{equation}
where $\leq$ is the product order on $\RR^\ell$, $R^*$ is the transpose of the Markov transition matrix $p_{\sigma,\tau}$ restricted to $\Theta_k\setminus\{\tau^*\}$ and $p^*=(p_{\tau^*,\tau_1},\dots,p_{\tau^*,\tau_\ell})$.

Similarly, if
$$y_i=\liminf_{N\to\infty}\frac{\occ(\tau_i,s\uh{N})}{\occ(\tau^*,s\uh{N})}$$
and $y=(y_1,\dots,y_\ell)$, then the same reasoning used in~\eqref{eqgrosa} shows
\begin{equation}\label{eqmatrixinf}
(\textbf{id}-R^*)y\geq p^*.
\end{equation}

Let us write $A=(\textbf{id}-R^*)$ and show that $Ax=Ay$. Equations~\eqref{eqmatrixsup} and~\eqref{eqmatrixinf} imply $Ax\leq Ay$, so it suffices to show that a contradiction follows from assuming $Ax<Ay$. Indeed, $Ax<Ay$ means that, for all $i$, $\sum_j A_{ij}x_j\leq \sum_j A_{ij}y_j$, where the inequality is strict for some $i$. This, in turn, implies
$$\sum_j\left(\sum_iA_{ij}\right)x_j=\sum_i\sum_jA_{ij}x_j<\sum_i\sum_jA_{ij}y_j=\sum_j\left(\sum_iA_{ij}\right)y_j,$$
which is impossible since $x_j\geq y_j$ for all $j$ and $\sum_i A_{ij}=1-\sum_i p_{\tau_j,\tau_i}\geq 0$. Thus, $Ax=Ay$.

Now, if $A$ were invertible then it would follow that $x=y$, which means $\lim_{N\to\infty}\ \occ(\tau_i,s\uh{N})/\occ(\tau^*,s\uh{N})$ exists for all $i$, and this in turn implies that~\eqref{limdem} is true for $\tau^*$, that is, $\occ(\tau^*,s\uh{N})/N$ converges (to $M(\tau^*)$, its $\limsup$), since
$$\sum_i\lim_{N\to\infty}\frac{\occ(\tau_i,s\uh{N})}{\occ(\tau^*,s\uh{N})}+1=\lim_{N\to\infty}\frac{N}{\occ(\tau^*,s\uh{N})}$$
and from this convergence for $\tau^*$ we derive that of $\tau_i$ for all $i$ using
$$\lim_{N\to\infty}\frac{\occ(\tau_i,s\uh{N})}{\occ(\tau^*,s\uh{N})}\lim_{N\to\infty}\occ(\tau^*,s\uh{N})=\lim_{N\to\infty}\frac{\occ(\tau_i,s\uh{N})}{N}.$$

So it remains to show that $A$ is indeed invertible. If it were not, then $R^*$ would have 1 as an eigenvalue, and the Perron-Frobenius theorem, together with the fact that the column sums of $R^*$ are smaller than 1, imply 1 has a unique nonnegative eigenvector $z=(z_1,\dots,z_\ell)$. That is,
\begin{equation}\label{eqinve}
\sum_{i=1}^\ell z_i R^*_{ji}=\sum_{i=1}^\ell z_ip_{\tau_i,\tau_j}=z_j.
\end{equation}

Now, $\tau^*\in\Theta_k$ is excluded from the enumeration $(\tau_i)_{1\leq i\leq \ell}$ and the irreducibility of the matrix $p_{\sigma,\tau}$ implies that $\Prec_k(\tau^*)\cap\left(\Theta_k\setminus\{\tau^*\}\right)$ is not empty.
Hence, there is some $\tau_i\in\Prec_k(\tau^*)$ and for each such $i$ we have $\sum_{j=1}^\ell p_{\tau_i,\tau_j}<1$, so that if $z_i\neq 0$ then~\eqref{eqinve} implies
$$\sum_{j=1}^\ell z_j=\sum_{i,j=1}^\ell z_ip_{\tau_i,\tau_j}=\sum_{i=1}^\ell z_i\sum_{j=1}^\ell p_{\tau_i,\tau_j}<\sum_{i=1}^\ell  z_i,$$
which is a contradiction.

Thus, $z_i=0$ for all $i$ such that $\tau_i\in\Prec_k(\tau^*)$. This in turn implies $p_{\tau_j,\tau_i}=0$ for all $j$ such that $z_j \neq 0$, since $0=z_i=\sum_j z_jp_{\tau_j,\tau_i}$. Equivalently, $z_j=0$ for all $j$ such that $\tau_j\in\Prec_k(\tau^i)$.

We then repeat this reasoning to show $z_k=0$ for all $k$ such that $\tau_k\in\Prec_k(\tau^j)$ and keep repeating the same reasoning until all entries in $z$ have been shown to be 0 (this is guaranteed by irreducibility). Hence, $z=0$, which contradicts the assumption that $z$ is an eigenvector of eigenvalue 1. It follows that $A$ must be invertible. This concludes the proof of Proposition~\ref{prop:lemma1-step2}.
\end{proof}

\begin{proposition}\label{prop:M-is-P}
$M$ is equal to $P$ restricted to $\Theta_k$.
\end{proposition}

\begin{proof}
Now, $M$ is a probability measure on $\Theta_k$, since
$$
\sum_{\tau\in\Theta_k}M(\tau)=\sum_{\tau\in\Theta_k}\lim_{N\to\infty}\frac{\occ(\tau,s\uh{N})}{N}=\lim_{N\to\infty}\frac{\sum_{\tau\in\Theta_k}\occ(\tau,s\uh{N})}{N}=1.
$$
Together with the Markov transition matrix $p_{\sigma,\tau}$, $M$ defines a probability measure $\nu$ on $\Theta_k^\NN$ in a natural way. First, $\nu$ is defined inductively on word cylinders
\begin{eqnarray*}
\nu([\tau])&=&M(\tau)\\
\nu([\tau_1\dots \tau_j])&=&\nu([\tau_1\dots\tau_{j-1}])p_{\tau_{j-1},\tau_j}
\end{eqnarray*}
then extended naturally to all cylinders and finally to the Borel $\sigma$-algebra $\mathcal{B}$ of $\Theta_k^\NN$ via Caratheodory's extension theorem.

As with $P$, we will drop the brackets for word cylinders.
To show that $\nu$ is invariant, it is enough to show it for word cylinders, that is, it is enough to show
\begin{equation*}
\nu(T^{-1}(\tau_1\dots\tau_j))=\sum_{\sigma\in\Theta_k}\nu(\sigma\tau_1\dots\tau_j)=\nu(\tau_1\dots\tau_j).
\end{equation*}
By our construction of $\nu$,
\begin{align*}
\sum_{\sigma\in\Theta_k}\nu(\sigma\tau_1\dots\tau_j)&=\sum_{\sigma\in\Theta_k}\nu(\sigma)\nu(\tau_1\mid\sigma)\prod_{i=1}^{j-1}\nu(\tau_{i+1}\mid\tau_i)\\
&=\sum_{\sigma\in\Theta_k}M(\sigma)p_{\sigma,\tau_1}\prod_{i=1}^{j-1}p_{\tau_i,\tau_{i+1}}=\left(\prod_{i=1}^{j-1}p_{\tau_i,\tau_{i+1}}\right)\sum_{\sigma\in\Theta_k}M(\sigma)r_{\sigma,\tau_1}\\
&=\left(\prod_{i=1}^{j-1}p_{\tau_i,\tau_{i+1}}\right)\sum_{\sigma\in\Theta_k}\lim_{N\to\infty}\frac{\occ(\sigma,s\uh{N})}{N}\lim_{N\to\infty}\frac{\occ(\sigma\tau_1,s\uh{N})}{\occ(\sigma,s\uh{N})}\\
&=\left(\prod_{i=1}^{j-1}p_{\tau_i,\tau_{i+1}}\right)\sum_{\sigma\in\Theta_k}\lim_{N\to\infty}\frac{\occ(\sigma\tau_1,s\uh{N})}{N}\\
&=\left(\prod_{i=1}^{j-1}p_{\tau_i,\tau_{i+1}}\right)\lim_{N\to\infty}\frac{\occ(\tau_1,s\uh{N})}{N}\\&=\left(\prod_{i=1}^{j-1}p_{\tau_i,\tau_{i+1}}\right)M(\tau_1)=\nu(\tau_1\dots\tau_j)\addtag\label{eqlarga}
\end{align*}
Thus, $\nu$ is invariant, 1-step Markov and has the irreducible Markov transition matrix $p_{\sigma,\tau}$. Theorem~\ref{perronfrobenius} implies that $P=\nu$ and $M$ is equal to $P$ restricted to $\Theta_k$, and this concludes the proof of Proposition~\ref{prop:M-is-P}.
\end{proof}

We finally show that $s$ is $P$-distributed, leading to a contradiction.
In~\eqref{eqlarga} we show that
$$\nu(\sigma\tau)=M(\sigma)p_{\sigma,\tau}=P(\sigma)r_{\sigma,\tau}=\lim_{N\to\infty}\frac{\occ(\sigma\tau,s\uh{N})}{N}$$
for $\tau\in\Theta_k$. This extends trivially to $\tau\in\Sigma^k$, for $M(\tau)=0$ if and only if $P(\tau)=0$ and $P=\nu$. Moreover, the same is valid if we substitute any word $\rho$ for $\tau$ in the above equations, since all we need is that $r_{\sigma,\rho}$ exist and be equal to $P(\sigma\rho\mid\sigma)$. Since $\sigma$ must be of length $k$, this means that
\begin{equation}\label{pdistri}
\lim_{N\to\infty}\frac{\occ(\rho,s\uh{N})}{N}=\nu(\rho)=P(\rho)
\end{equation}
for all words $\rho$ of length at least $k$.
But then~\eqref{pdistri} must also be true for words $\rho$ of length smaller than $k$, since
$$
P(\rho)=\sum_{\substack{\tau\in\Theta_k\\ \rho\prec\tau}}P(\tau)=\sum_{\substack{\tau\in\Theta_k\\ \rho\prec\tau}}\lim_{N\to\infty}\frac{\occ(\tau,s\uh{N})}{N}=\lim_{N\to\infty}\frac{\sum_{\substack{\tau\in\Theta_k\\ \rho\prec\tau}}\occ(\tau,s\uh{N})}{N}=\lim_{N\to\infty}\frac{\occ(\rho,s\uh{N})}{N}.
$$
$P$-distribution need only be checked on word cylinders, so this completes the proof that some $\sigma^*$ satisfies~\eqref{cond1}.

It only remains to show that such a $\sigma^*$ can be chosen so that $m_s(\sigma^*)>0$.
Again, we prove this by contradiction. That is, let us suppose that for all $\sigma\in L(X)$ ($|\sigma|\geq k$) such that $m(\sigma)>0$, we have that $r_{\sigma,b}$ exists for any symbol $b$ and is equal to $P(\sigma b\mid\sigma)$. As before, this implies $r_{\sigma,\tau}$ exists for all words $\tau$ and is equal to $P(\sigma\tau\mid\sigma)$.

Take some $\sigma^*$ that satisfies~\eqref{cond1}. Then $m(\sigma^*)=0$. Take some $\tau$ such that $P(\tau\sigma^*)>0$ (irreducibility implies this can be done by finding some $(\tau_i)_{1\leq i\leq l}\subseteq\Theta_k$ such that $\sigma^*\prec\tau_1\dots\tau_l\in L(X)$ and then finding some $\tau\in\Prec_k(\tau_i)$). If $m(\tau)>0$ then $r_{\tau,\sigma^*}$ exists and is equal to $P(\tau\sigma^*\mid\tau)>0$. But this contradicts the fact that $m(\tau\sigma^*)\leq m(\sigma^*)=0$. So $m(\tau)=0$ for all $\tau\in\Prec_k(\sigma^*)$.

Similarly, for all $\sigma\in\Prec_k(\Prec_k(\sigma^*))$ we have $m(\sigma)=0$ and the same reasoning can be repeated until $m(\sigma)=0$ has been shown for all $\sigma\in\Theta$ (irreducibility guarantees this), which contradicts the condition that $m(\tau^*)>0$ for some $\tau^*\in\Theta_k$. This concludes the proof of Lemma~\ref{lemma1}.
\end{proof}

\begin{lemma}\label{lemma2}Given $s\in X$, if there is some $\sigma^*\in\Theta_k$ that satisfies $m(\sigma^*)=0$, then there are some $d>0$, $\rho\in\Theta_k$, $\sigma\in\Prec_k(\rho)$ and a strictly increasing sequence $(N_j)_{j\in \NN}$ of natural numbers such that
$
\lim_{j\to\infty} \occ(\rho,s\uh{N_j})/N_j=0$ and
$
\limsup_{j\to\infty} \sum_{\tau\in\Theta_k\setminus\{\rho\}} \occ(\sigma\tau,s\uh{N_j})/N_j\geq d.
$
\end{lemma}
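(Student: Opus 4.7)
The plan is to exploit $m(\sigma^*)=0$ to extract a subsequence along which $\sigma^*$ has sublinear occurrences, and then use irreducibility of $(p^k_{\sigma,\tau})_{\sigma,\tau\in\Theta_k}$ to locate a ``boundary'' predecessor whose frequency does not vanish along that subsequence. First I would fix a strictly increasing $(N_j)\subset\NN$ with $\occ(\sigma^*,s\uh{N_j})/N_j\to 0$ and define the ``rare'' set
$$B=\bigl\{\tau\in\Theta_k\colon \occ(\tau,s\uh{N_j})/N_j\to 0\text{ as }j\to\infty\bigr\},$$
which is nonempty because $\sigma^*\in B$. To see that $B$ is a \emph{proper} subset of $\Theta_k$, I would observe that $s\in X$ forces every window of length $k$ in $s$ to lie in $L(X)\cap\Sigma^k$, and because $P$ is $L(X)$-supported this coincides with $\Theta_k$. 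Hence $\sum_{\tau\in\Theta_k}\occ(\tau,s\uh{N})=N-k+1$, and dividing by $N_j$ the left side would tend to $0$ if $B=\Theta_k$, contradicting that the right side tends to $1$.

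Next I would invoke irreducibility of $(p^k_{\sigma,\tau})$, which makes the directed graph on $\Theta_k$ with edge set $\{(\sigma,\rho):\sigma\in\Prec_k(\rho)\}$ strongly connected. Since $B$ and $\Theta_k\setminus B$ are both nonempty, any path from a vertex of $\Theta_k\setminus B$ to a vertex of $B$ must cross the boundary, yielding $\sigma\in\Theta_k\setminus B$ and $\rho\in B$ with $\sigma\in\Prec_k(\rho)$. Since $\sigma\notin B$, $\limsup_j \occ(\sigma,s\uh{N_j})/N_j=c$ for some $c>0$; after thinning to a further subsequence (still written $(N_j)$) I may assume $\occ(\sigma,s\uh{N_j})/N_j\to c$, while $\occ(\rho,s\uh{N_j})/N_j\to 0$ still holds because $\rho\in B$ and subsequences of null sequences are null.

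To conclude, each occurrence of $\sigma$ in $s\uh{N_j}$ other than those among the last $k$ positions (contributing only $O(k)$) is followed in $s$ by a uniquely determined $\tau\in\Sigma^k$ with $\sigma\tau\in L(X)$, forcing $\tau\in\Theta_k$. Summing and dividing by $N_j$ gives $\sum_{\tau\in\Theta_k}\occ(\sigma\tau,s\uh{N_j})/N_j\to c$. The single term $\occ(\sigma\rho,s\uh{N_j})/N_j$ is bounded above by $\occ(\rho,s\uh{N_j})/N_j\to 0$, so subtracting it yields
$$\sum_{\tau\in\Theta_k\setminus\{\rho\}}\frac{\occ(\sigma\tau,s\uh{N_j})}{N_j}\to c,$$
and any $0<d\le c$ (say $d=c/2$) meets every requirement of the lemma.

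I do not expect the argument to be technically delicate; the conceptual heart is the boundary-edge extraction in the second paragraph, which is the only place irreducibility is genuinely used. Without it, $B$ could be a ``trap'' that no element of $\Theta_k\setminus B$ has an edge into, in which case no $\sigma\in\Prec_k(\rho)$ with $\sigma\notin B$ would exist and the final counting step would be vacuous.
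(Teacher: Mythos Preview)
Your argument is correct and follows essentially the same strategy as the paper: both use the counting identity $\sum_{\tau\in\Theta_k}\occ(\tau,s\uh{N})=N-k+1$ to show that not all of $\Theta_k$ can have vanishing frequency along the chosen subsequence, and then use irreducibility to locate a boundary edge from a non-vanishing $\sigma$ into a vanishing $\rho$. The only difference is organizational: the paper grows the ``rare'' set iteratively via $B_{t+1}=B_t\cup\Prec_k(B_t)$ until it would exhaust $\Theta_k$, whereas you define $B$ in one shot and extract the boundary edge directly---your packaging is cleaner, but the content is the same.
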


\begin{proof} Let $\sigma^*\in\Theta_k$ satisfy $m(\sigma^*)=0$ and let $(N_j)_{j\in\NN}$ be a strictly increasing sequence of natural numbers such that
$
\lim_{j\to\infty} \occ(\sigma^*,s\uh{N_j})/N_j=0.
$
If for some $\sigma\in\Prec_k(\sigma^*)$ and some $d>0$ we have
$\limsup_{j\to\infty} \sum_{\tau\in\Theta_k\setminus\{\sigma^*\}} \occ(\sigma\tau,s\uh{N_j})/N_j\geq d,
$
then we set $\rho=\sigma^*$ and we are done.

Otherwise, we have, for $\epsilon>0$, a $j_0$ such that for all $j\geq j_0$,
$$
\sum_{\substack{\sigma\in\Prec_k(\sigma^*)\\ \tau\in\Theta_k\setminus\{\sigma^*\}}}\frac{\occ(\sigma\tau,s\uh{N_j})}{N_j}<\frac{\epsilon}{|\Theta_k|},
$$
and since
$\lim_{j\to\infty} \occ(\sigma\sigma^*,s\uh{N_j})/N_j\leq\lim_{j\to\infty} \occ(\sigma^*,s\uh{N_j})/N_j=0,$
we conclude, for all $j$ greater than some $j_0$,
$$
\frac{\sum_{\sigma\in\Prec_k(\sigma^*)} \occ(\sigma,s\uh{N_j})+\occ(\sigma^*,s\uh{N_j})}{N_j}<2\epsilon.
$$
Hence, if we write $B_0=\{\sigma^*\}$, $B_{t+1}=B_t\cup\Prec_k(B_t)$ and, for any finite $A\subseteq \Sigma^*$
$$
\occ(A)=\limsup_{j\to\infty}\frac{\sum_{\sigma\in A}\occ(\sigma,s\uh{N_j})}{N_j},
$$
then we have just shown that $\occ(B_1)=\occ(\Prec_k(\sigma^*))=0$.

Similarly, given $t$ such that $\occ(B_t)=0$
we can use the same reasoning to show that either
$\limsup_{j\to\infty} \sum_{\tau\in\Theta_k\setminus\{\rho\}} \occ(\sigma\tau,s\uh{N_j})/N_j\geq d
$
for some $d>0$, $\rho\in B_t$ and some $\sigma\in \Prec_k(\rho)\subseteq B_{t+1}$, in which case we are done, or else $\occ(B_{t+1})=0$.

But irreducibility implies that, for some $p$, $B_p=\Theta_k$ and we cannot have $0=\occ(B_p)=\occ(\Theta_k)=1$. Hence, there is some $t$ and some $d>0$, $\rho\in B_t$ and $\sigma\in \Prec_k(\rho)$ such that
$
\lim_{j\to\infty} \occ(\rho,s\uh{N_j})/N_j\leq\lim_{j\to\infty} \sum_{\sigma\in B_t} \occ(\sigma,s\uh{N_j})/N_j
$
and
$$
\limsup_{j\to\infty} \sum_{\tau\in\Theta_k\setminus\{\rho\}} \occ(\sigma\tau,s\uh{N_j})/N_j\geq d.
$$
This concludes the proof of Lemma~\ref{lemma2}.
\end{proof}





\begin{proof}[Proof of Theorem~\ref{maintheorem}]We will split our proof in two cases.

\paragraph{Case I.} {\em There is some $\tau^*\in \Theta_k$ such that $m(\tau^*)>0$.}

From Lemma~\ref{lemma1} we may assume that for some $\sigma\in L(X)$ satisfying $|\sigma|\geq k$ and $m(\sigma)>0$, some $b\in\Sigma$ and some rational $\delta>0$
\begin{equation}\label{limsup}
\limsup_{N\to\infty}\frac{\occ(\sigma b,s\uh{N})}{\occ(\sigma,s\uh{N})}>(1+\delta)P(\sigma b\mid\sigma),
\end{equation}
since~\eqref{cond1} implies either~\eqref{limsup} or
$$
\liminf_{N\to\infty}\frac{\occ(\sigma b,s\uh{N})}{\occ(\sigma,s\uh{N})}<(1-\delta)P(\sigma b\mid\sigma),
$$
but in the latter case it is easy to find some $b'$ such that~\eqref{limsup} is true for $\sigma b'$.

We define our $P$-martingale $L$ by:
\begin{align*}
L(\emptyset)&=1\\
  L(\rho c)&=\begin{cases}
    (1+\delta) L(\rho) & \text{if $\rho\dh{ |\sigma|}=\sigma$ and $c=b$};\\
    \left(1-\frac{\delta p^*}{1-p^*}\right)L(\rho) & \text{if $\rho\dh{|\sigma|}=\sigma$ and $c\neq b$};\\
    L(\rho) & \text{otherwise};
  \end{cases}
\end{align*}
for any $c\in\Sigma$, $\rho c\in L(X)$,
where
$p^*=P(\sigma b|\sigma)$ and we further impose that $\delta<(1-p^*)/p^*$.
Notice that for all $\rho$ such that $\rho\dh{|\sigma|}=\sigma$ the $k$-step Markov property and $|\sigma|\geq k$ impliy that $p^*=P(\rho b|\rho)$. From this it is easy to see that $L$ is a $P$-martingale, and it is also clearly generated by a DFA, since at each step the betting factor depends solely on the next symbol and the previous $|\sigma|$ symbols of $\rho$ and since there are finitely many words of length $\sigma$, it suffices to consider the finite set of states $Q=|\Sigma|^{|\sigma|}$.

To see that $L$ succeeds on $s$, we observe first that
$$L
(\rho)=(1+\delta)^{\occ(\sigma b,\rho)}\prod_{c\neq b}\left(1-\frac{\delta p^*}{1-p^*}\right)^{\occ(\sigma c,\rho)}
$$
and that
\begin{equation}\label{limocc}
\sum_{c\neq b}\occ(\sigma c,s\uh{N})\leq\occ(\sigma,s\uh{N})-\occ(\sigma b,s\uh{N}).
\end{equation}
Let $r=1+\delta$ and $q=1-\frac{\delta p^*}{1-p^*}$. Equation~\eqref{limocc} then implies
\begin{align}\limsup_{N\to\infty} \frac{\log L(s\uh{N})}{N} &\geq\limsup_{N\to\infty} \frac{\occ(\sigma b,s\uh{N})}{N}(\log r-\log q)+\frac{\occ(\sigma,s\uh{N})}{N}\log q \nonumber\\
& =\limsup_{N\to\infty} \frac{\occ(\sigma,s\uh{N})}{N}\left[\frac{\occ(\sigma b,s\uh{N})}{\occ(\sigma,s\uh{N})}(\log r-\log q)+\log q\right]\nonumber\\
& \geq\liminf_{N\to\infty} \frac{\occ(\sigma,s\uh{N})}{N}\limsup_{N\to\infty}\left[r\frac{\occ(\sigma b,s\uh{N})}{\occ(\sigma,s\uh{N})}(\log r-\log q)+\log q\right]\nonumber\\
& \geq m(\sigma)\left[rP(\sigma b\mid\sigma)(\log r-\log q)+\log q\right] \nonumber\\
&= p^*m(\sigma)\left[(1+\delta)\log (1+\sigma)+\left(p^{*-1}-(1+\sigma)\right) \log \left(1-\frac{\delta p^*}{1-p^*}\right)\right].\label{last}
\end{align}
Observe that $p^*>0$, for otherwise $\sigma b\notin L(X)$, $\occ(\sigma b,s\uh{N})=0$ for all $N$ and the inequality in~\eqref{limsup} would not be obtained. Hence, the multiplying factor on the left is strictly positive. Now if in~\eqref{last} we make the substitution $x=p^{*-1}-1$ we may notice that the function $f(\delta)=(1+\delta)\log(1+\delta)+(x-\delta)\log(1-\delta/x)$ satisfies $f(0)=0$ and $f'(\delta)=\log(1+\delta)-\log(1-\delta/x)>0$ for $0<\delta<x$. Then there is a $c>0$ such that
$
\limsup_{N\to\infty} \log L(s\uh{N})/N \geq c.
$
and there will be infinitely many $N$'s such that $L(s\uh{N})\geq 2^{cN}$, which implies
$\limsup_{N\to\infty} L(s\uh{N})=\infty.
$

\paragraph{Case II.} {\em For all $\tau\in\Theta_k$ we have $m(\tau)=0$.}

Lemma~\ref{lemma2} implies that there are some $d>0$, $\rho\in\Theta_k$, $\sigma\in\Prec_k(\rho)$ and a strictly increasing sequence of natural numbers $(N_j)_{j\in\NN}$ such that
\begin{equation}\label{subseq1}
\lim_{j\to\infty} \frac{\occ(\rho,s\uh{N_j})}{N_j}=0
\mbox{\qquad and \qquad }
\limsup_{j\to\infty} \frac{\sum_{\tau\in\Theta_k\setminus\{\rho\}} \occ(\sigma\tau,s\uh{N_j})}{N_j}\geq d.
\end{equation}

Notice that $\langle s:k\rangle$ is actually a sequence in $\Theta_k$ (and not just $\Sigma^k$) since $s\in X$  and all words of length $k$ in $s$ must belong to $L(X)$. Also, as mentioned in \S\ref{sec:symdyn}, $P$ induces an irreducible Markov measure $P^k$ on $\Theta_k^\NN$, so we will first construct a $P^k$-martingale on $\Theta_k^*$. Let $p^*=1-P(\sigma \rho|\sigma)<1$ (since $\sigma\in\Prec_k(\rho)$), $(1-p^*)>p^*\delta$, $c$ be a symbol of $\Theta_k$ and $\delta>0$. We define $M$ as follows:
\begin{eqnarray*}
M(\emptyset)&=&1;\\
  M(\tau_1\dots\tau_{l+1} )&=&\begin{cases}
    (1+\delta) M(\tau_1\dots\tau_l) & \text{if $\tau_l=\sigma$ and $\tau_{l+1}\neq\rho$};\\
    \left(1-\frac{\delta p^*}{1-p^*}\right)M(\tau_1\dots\tau_l) & \text{if $\tau_l=\sigma$ and $\tau_{l+1}= \rho$};\\
    M(\tau_1\dots\tau_l) & \text{otherwise.}
  \end{cases}
\end{eqnarray*}
It is easy to check that $M$ is a $P^k$-martingale generated by a DFA.

Observe that
$$
M(\tau_1\dots\tau_l)=\left(1-\frac{\delta p^*}{1-p^*}\right)^{\occ(\sigma \rho,\tau_1\dots\tau_l)}\prod_{\substack{c\in\Theta_k\\ c\neq \rho}}\left(1+\delta\right)^{\occ(\sigma c,\tau_1\dots\tau_l)}.
$$
Write $q=1-\delta p^*/(1-p^*)$ and $r=1+\delta$ and fix $\epsilon>0$ such that $\epsilon<d\log r$. Then~\eqref{subseq1} implies there are infinitely many $N$'s such that
$$\log r\sum_{\substack{\tau\in\Theta_k\\ \tau\neq \rho}}\frac{\occ(\sigma\tau,\langle s:k\rangle\uh{N})}{N}+\log q\frac{\occ(\sigma\rho,\langle s:k\rangle\uh{N})}{N}\geq d\log r-\epsilon.$$
Thus, for $K=d\log r-\epsilon>0$ we have
$
\log M(\langle s:k\rangle\uh{N})/N\geq K>0
$
for infinitely many $N$'s. This implies the martingale succeeds on $\langle s:k\rangle$.

From this martingale $M$ on $\Theta_k^*$ one uses the definition of a $P$-martingale to extend $M$ to a $P$-martingale $\widehat{M}$ on $\Sigma^*$ (it is a routine exercise to check $\widehat{M}$ is well defined as a $P$-martingale and that it is also generated by a DFA), and the fact that $M$ succeeds on $\langle s:k\rangle$ implies that $\widehat{M}$ succeeds on $s$.
\end{proof}

\subsubsection{An extension to sofic shifts}\label{sec:extension-to-sofic}
We now extend Theorem~\ref{maintheorem} to a more general class of measures on sofic subshifts. In order to do so, we need some of the standard results and definitions regarding sofic subshifts.

A \textit{labelled directed graph} on alphabet $\Sigma$ is a tuple $(G,\mathcal{L})$ where $G$ is a directed graph with finite nodes $\mathcal{N}(G)$ and finite edges $\mathcal{E}(G)$ and $\mathcal{L}$ is a function assigning to each edge $e$ in $\mathcal{E}(G)$ a symbol $\mathcal{L}(e)\in\Sigma$.

Given a labelled directed graph $(G,\mathcal{L})$, a \textit{path on} $(G,\mathcal{L})$ through states $i_0,\dots,i_l\in\mathcal{N}(G)$ is a finite sequence of symbols $a_1\dots a_l\in\Sigma^*$ for which there are edges $e_1,\dots,e_l\in\mathcal{E}(G)$ such that, for all $1\leq j\leq l$, $i_j$ is the destination node of $e_j$, $i_{j-1}$ is the origin node of $e_j$ and $\mathcal{L}(e_j)=a_j$. The set of paths on $(G,\mathcal{L})$ is denoted $\mathcal{P}_G$.

Notice that any labelled directed graph is equivalent to an automaton $M_G$ on $\Sigma$ without an initial state and with a single absorbing non-accepting state. The accepting states of $M_G$ are given by the nodes of $G$, and a transition $\delta(i,a)=j$ whenever there is an edge between $i$ and $j$ labelled $a$. The following definition is then equivalent to this automaton being deterministic.

A labelled directed graph $G$ on alphabet $\Sigma$ is called \textit{right-resolving} if for any symbol $a\in\Sigma$, and any node $i\in\mathcal{N}(G)$ there is at most one edge $e\in\mathcal{E}(G)$ such that $e$ has $i$ as its origin node and $\mathcal{L}(e)=a$.

Labelled graphs may be used to represent sofic subshifts in the following way:
\begin{definition}
A \textit{labelled graph presentation} of a sofic subshift $(X,T)$ on alphabet $\Sigma$ is a labelled directed graph $(G,\mathcal{L})$ such that
$L(X)=\mathcal{P}_G$.
\end{definition}
Notice that, given a directed graph $G$ there is a natural 1-step Markov shift $(X_G,T)$ consisting of the admissible sequences of edges. Furthermore, when a labelling function $\mathcal{L}$ is defined on the edges of $G$, the one-block code that maps $e$ to $\mathcal{L}(e)$ induces a factor map $\mathcal{L}^*$ from the Markov shift $(X_G,T)$ to the sofic subshift represented by $(G,\mathcal{L})$.

A labelled graph presentation $(G,\mathcal{L})$ of a sofic subshift $(X,T)$ on alphabet $\Sigma$ is called \textit{minimal} if there is no other presentation $(G',\mathcal{L}')$ of $X$ with strictly fewer nodes,
and it is called \textit{irreducible} if the underlying directed graph is strongly connected
%
We say that a sofic subshift $(X,T)$ is  \textit{irreducible} if for any words $\sigma,\tau\in L(X)$ there is a word $\rho$ such that $\sigma\rho\tau\in L(X)$.
\begin{theorem}(\cite[Theorem 3.3.2]{Lind95})\label{teolind} Any irreducible sofic subshift has a unique (up to graph isomorphism) minimal, irreducible, right-resolving graph presentation.
\end{theorem}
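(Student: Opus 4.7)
The plan is to build a canonical presentation, the \emph{Fischer cover}, directly from $L(X)$ via follower sets, and then show any minimal irreducible right-resolving presentation must coincide with it. For $w\in L(X)$, define the follower set $F(w)=\{u\in\Sigma^*\colon wu\in L(X)\}$. I would first prove that the collection $\mathcal{F}(X)=\{F(w)\colon w\in L(X)\}$ is finite: since $X$ is sofic, $L(X)$ is recognized by some finite labelled graph, and one checks that $F(w)=F(w')$ whenever $w$ and $w'$ lead to the same state in that presentation, so $|\mathcal{F}(X)|$ is bounded by the number of states of any presentation. Then I build a labelled graph $G^*$ on node set $\mathcal{F}(X)$, putting an edge labelled $a$ from $F(w)$ to $F(wa)$ whenever $wa\in L(X)$ (one checks this is well-defined: if $F(w)=F(w')$ and $wa\in L(X)$, then $w'a\in L(X)$ and $F(wa)=F(w'a)$). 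By construction, distinct edges out of the same node carry distinct labels, so $G^*$ is right-resolving, and a straightforward induction on word length shows $L(X)=\mathcal{P}_{G^*}$.

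Next I would address irreducibility of $G^*$ under the assumption that $X$ is irreducible. Given two nodes $F(w),F(w')$, irreducibility of $X$ yields some $\rho$ with $w\rho w'\in L(X)$; then the path labelled $\rho w'$ starting at $F(w)$ ends at $F(w\rho w')$, and a further follower-set argument lets one continue to $F(w')$. So $G^*$ is strongly connected, hence irreducible.

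For minimality and uniqueness, let $(G,\mathcal{L})$ be any irreducible right-resolving presentation of $X$. For each node $i\in\mathcal{N}(G)$ and each word $w$ that can be read into $i$, right-resolvingness implies that the set of labels of paths leaving $i$ is exactly $F(w)$; this defines a well-defined, surjective map $\phi\colon\mathcal{N}(G)\to\mathcal{F}(X)$. Hence $|\mathcal{N}(G)|\geq|\mathcal{F}(X)|$, giving minimality of $G^*$. For uniqueness, if $G$ is itself minimal then $\phi$ is a bijection, and right-resolvingness forces $\phi$ to carry the edge set of $G$ onto that of $G^*$ label-preservingly; checking that this respects sources and targets yields a graph isomorphism commuting with the labellings.

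The main obstacle will be the uniqueness step: showing that $\phi$ is injective (not just surjective) when $G$ is minimal, and that $\phi$ is an actual graph isomorphism rather than merely a bijection of nodes. The standard way around this is to quotient any presentation by the equivalence ``two nodes have the same follower set of labels of outgoing paths,'' verify this quotient is still a right-resolving presentation (this uses right-resolvingness in an essential way), and then appeal to minimality to conclude the original $G$ was already this quotient. Irreducibility is preserved under this quotient, closing the argument.
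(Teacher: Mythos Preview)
The paper does not give its own proof of this statement; it is quoted verbatim from Lind--Marcus \cite[Theorem 3.3.2]{Lind95} and used as a black box. So there is nothing to compare against except the standard textbook argument, which your outline follows in spirit.

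That said, your sketch has a real gap in the minimality/uniqueness step, and it is not the one you flag. You assert that for any node $i$ of an irreducible right-resolving presentation $(G,\mathcal{L})$ and any word $w$ that can be read into $i$, the set $F_G(i)$ of labels of paths leaving $i$ equals $F(w)$. Only the inclusion $F_G(i)\subseteq F(w)$ is automatic: if $wu\in L(X)$, the path labelled $wu$ witnessing this may pass through a node $j\neq i$ after reading $w$, so $u$ need not be readable from $i$. Consequently your map $\phi\colon i\mapsto F(w)$ is not well-defined as stated, and the inequality $|\mathcal{N}(G)|\geq|\mathcal{F}(X)|$ does not follow. The standard repair is exactly what the paper invokes two lines later as Theorem~\ref{teosincro}: one first shows the presentation has a synchronizing word $m$, and for such $m$ one does get $F_G(i)=F(m)$ at the unique terminal node $i$; extending arbitrary words to synchronizing ones then gives the map into $\mathcal{F}(X)$. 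Your irreducibility argument for $G^*$ has the same defect: reaching $F(w\rho w')$ from $F(w)$ is fine, but the ``further follower-set argument'' you invoke to continue to $F(w')$ again needs $w'$ (or an extension of it) to be synchronizing.

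The quotient-by-follower-sets idea you mention at the end is the correct way to organise all of this, and is essentially how Lind--Marcus proceed: merge nodes of $G$ with equal $F_G(i)$, check the quotient is still right-resolving and still presents $X$, conclude a minimal $G$ is follower-separated, and then match two follower-separated presentations node-by-node via their (now intrinsic) follower sets. If you restructure your proof around that quotient from the start, rather than trying to map into the abstract follower-set graph $G^*$, both gaps disappear.
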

Given a labelled directed graph $(G,\mathcal{L})$ on alphabet $\Sigma$, a \textit{synchronizing word} for $G$ is a word $\alpha=a_1\dots a_l\in\Sigma^*$ for which the set
$$\{i_l\in\mathcal{N}(G)\colon \ \text{there exist }i_0,\dots,i_{l-1}\in\mathcal{N}(G)\text{ such that }\alpha\text{ is a path through }i_0,\dots,i_l\}$$
has a single node. We call that node the \textit{synchronizing node} of $\alpha$.
%
That is, when regarding the graph as the equivalent automaton $M_G$, a synchronizing word is one that leaves the automaton in one and only one state after being read, regardless of the state on which its reading began.
Finally, we have \cite[Proposition 3.3.9 and Proposition 3.3.16]{Lind95}:
\begin{theorem}\label{teosincro}A minimal, right-resolving labelled graph presentation of a sofic subshift $(X,T)$ has a synchronizing word.
\end{theorem}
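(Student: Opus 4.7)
The plan is to combine a subset-construction argument with a follower-separation property coming from minimality. For each $w \in L(X) = \mathcal{P}_G$, let
\[
T(w) = \{i_l \in \mathcal{N}(G) \colon \text{there exist } i_0,\dots,i_{l-1}\in\mathcal{N}(G) \text{ with } w \text{ a path through } i_0,\dots,i_l\}.
\]
Then $w$ is synchronizing exactly when $|T(w)|=1$. Since $L(X)\neq\emptyset$, I may pick $w^*\in L(X)$ minimizing $|T(w^*)|\geq 1$ over all $w \in L(X)$, and the goal becomes showing this minimum equals $1$.

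The key preliminary step is a \emph{follower-separation lemma}: in any minimal right-resolving presentation, the follower sets $F_G(i) = \{v\in\Sigma^*\colon v \text{ labels some path in } G \text{ starting at } i\}$ distinguish nodes, i.e.\ $F_G(i)=F_G(j)$ forces $i=j$. The standard argument is a quotient construction: if $F_G(i)=F_G(j)$ with $i\neq j$, one identifies them and obtains a presentation of the same sofic shift. Right-resolvingness survives because two same-labelled edges leaving $i$ and $j$ must land on nodes with equal follower sets (hence identified too), so the quotient is a strictly smaller right-resolving presentation, contradicting minimality.

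Given this lemma, I conclude by contradiction. Suppose $A=T(w^*)$ contains distinct nodes $i,j$. By follower separation, $F_G(i) \neq F_G(j)$, so without loss of generality there is $v\in F_G(i)\setminus F_G(j)$. Concatenating a path realizing $w^*$ and ending at $i$ with a path labelled $v$ starting at $i$ shows $w^*v\in\mathcal{P}_G = L(X)$. Right-resolvingness then gives
\[
T(w^*v) = \{\delta^*(i',v)\colon i'\in A \text{ and } \delta^*(i',v) \text{ is defined in } M_G\},
\]
and because $v\notin F_G(j)$, the node $j$ contributes nothing to the image, so $|T(w^*v)|\leq |A|-1 <|A|$, contradicting the minimality of $|T(w^*)|$. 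Thus $|A|=1$ and $w^*$ is synchronizing.

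The main obstacle is the follower-separation lemma: one must verify that the proposed quotient graph really is a right-resolving presentation of the very same sofic subshift. This boils down to routine but slightly delicate inductive checks on path length, exploiting the fact that right-resolvingness transmits follower equivalence between nodes to follower equivalence between their unique successors under each label.
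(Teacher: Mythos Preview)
The paper does not prove this theorem; it simply cites \cite[Propositions 3.3.9 and 3.3.16]{Lind95}. Your argument is correct and is essentially the standard textbook proof found there: Proposition~3.3.9 is exactly your follower-separation lemma (a minimal right-resolving presentation is follower-separated, proved by the quotient construction you sketch), and Proposition~3.3.16 deduces the existence of a synchronizing word via the same subset-minimization argument you give.

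One small point of precision worth tightening: when you write ``one identifies them'' for $i$ and $j$, the construction that actually works is the full quotient by the equivalence relation $p\sim q \Leftrightarrow F_G(p)=F_G(q)$, not just merging the single pair $\{i,j\}$. Your parenthetical ``hence identified too'' shows you have this in mind, but it should be stated as the definition of the quotient rather than as a consequence, since merging only $i$ and $j$ can break right-resolvingness at that node before any further identifications are made.
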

We are now ready to prove the extension of our previous result.
\begin{theorem}\label{teosofic}Let $X$ be an irreducible sofic subshift on alphabet $\Sigma$ and let $(G,\mathcal{L})$ be its minimal, irreducible, right-resolving presentation. Let $(X_G,T)$ be the Markov shift of edge sequences associated to $G$ and let $P$ be an irreducible, invariant, $L(X_G)$-supported, 1-step Markov measure on $\mathcal{E}(G)^\NN$. Let $\mathcal{L}^*\colon X_G\to X$ be the natural factor map induced by $\mathcal{L}$ and let $\nu=P\circ \mathcal{L}^{*-1}$ be the pushforward measure on $\Sigma^\NN$. Let $s\in X$ be not $\nu$-distributed.

\begin{enumerate}
\item\label{teosofic-1} if a synchronizing word appears as a factor of $s$ then there is a $\nu$-martingale generated by a DFA which succeeds on $s$.
Moreover, the only betting factors of this martingale are 1, $(1+\delta)$ and $(1-\delta p^*/(1-p^*))$, where $\delta$ is rational and $p^*=P(\tau\rho\mid\tau)$ or $p^*=1-P(\tau\rho\mid\tau)$ for some $\tau,\rho\in\Sigma^*$.

\item\label{teosofic-2} if no synchronizing word appears as a factor of $s$ then there is a $\nu$-supermartingale generated by a DFA which succeeds on $s$.
Moreover, the only betting factors of this martingale are 1, $(1-\delta^*)$ and $(1+\delta)$, where $\delta^*$ and $\delta$ are rational.
\end{enumerate}
\end{theorem}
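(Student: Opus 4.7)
The plan is to treat the two cases separately, reducing each (in different ways) to the Markov setting of Theorem~\ref{maintheorem}.

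For Case~\ref{teosofic-1}, the idea is to lift $s$ to $X_G$ and invoke Theorem~\ref{maintheorem}. Because the synchronizing word $\alpha$ appears in $s$, after its first occurrence the current graph-node is the unique synchronizing node $v_\alpha$, and from there on the right-resolving property determines, at each symbol read, the unique outgoing edge; this produces a deterministic lift $s'\in X_G$ of the tail of $s$. I would first verify that $s'$ is not $P$-distributed, using that the symbol-block frequencies of $s$ are linear combinations of edge-block frequencies of $s'$ under $\mathcal{L}^*$. Theorem~\ref{maintheorem} then yields a $P$-martingale $L$ on $L(X_G)$ generated by a DFA $M_L$, with the stated betting factors, succeeding on $s'$. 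To translate $L$ into a $\nu$-martingale on $L(X)$ generated by a DFA, I would build a product automaton over $\Sigma$ combining (i) a pattern-matching DFA for $\alpha$ (with betting factor $1$ as long as $\alpha$ is still being searched), (ii) a tracker of the current graph-node (correctly initialized to $v_\alpha$ once $\alpha$ has been fully read), and (iii) a simulation of $M_L$ fed with the unique edge determined by each (current node, next symbol) pair. The martingale identity for this composite object descends from the identity for $L$, because once the graph-state is known one has $\nu(\sigma a\mid\sigma)=P(e\mid e')$ for the unique edge $e$ labeled $a$.

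For Case~\ref{teosofic-2}, no such lift is available, so I would construct a $\nu$-supermartingale directly, betting against the anomaly $\occ(\alpha,s\uh{N})/N\to 0$ against $\nu(\alpha)>0$. The DFA tracks both a KMP matching-state for $\alpha$ and the subset $V\subseteq \mathcal{N}(G)$ of currently possible graph-nodes (the subset is updated deterministically since $(G,\mathcal{L})$ is right-resolving). In a KMP state $j$ in which every $v\in V$ carries an outgoing $a_{j+1}$-edge, the multiplier is $(1-\delta^*)$ on reading $a_{j+1}$ and $(1+\delta)$ on any other admissible symbol; in all other configurations the multiplier is $1$. Subset-tracking enforces the uniform lower bound $p:=\nu(a_{j+1}\mid\sigma)\geq p^*>0$ in the active configurations, where $p^*$ is the minimum positive transition probability of $P$, and then the supermartingale inequality $(1-\delta^*)p+(1+\delta)(1-p)\leq 1$ reduces to $\delta^*\geq\delta(1-p^*)/p^*$. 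To secure success on $s$, I would choose to bet at the largest $j<k$ such that $a_1\cdots a_j$ has positive upper density of occurrence in $s$ while $a_1\cdots a_{j+1}$ has zero upper density (such $j$ exists because $\alpha$ itself never appears); at that level the gains $(1+\delta)$ accumulate along a positive-density subsequence, while the losses $(1-\delta^*)$ at that level are incurred only finitely often.

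The main obstacle is Case~\ref{teosofic-2}. Two points need care. First, $\nu$ is only hidden Markov rather than Markov, so $\nu(a\mid\sigma)$ genuinely depends on the full history through the induced distribution over graph-states at the end of $\sigma$; the subset-of-states tracker is what rescues a uniform lower bound $p^*>0$. Second, the critical level $j$ depends on $s$ and cannot be hard-coded into the DFA, so one must bet at all KMP levels simultaneously with parameters $\delta_j,\delta^*_j$ chosen so that the supermartingale inequality holds level-by-level while the gains at the critical level dominate the cross-contamination from lower levels (where $a_1\cdots a_{j'+1}$ may occur with positive density). Balancing these parameters while keeping the betting factors of the prescribed form $\{1,(1+\delta),(1-\delta^*)\}$ is, I expect, the most delicate piece of the argument.
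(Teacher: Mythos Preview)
Your treatment of Case~\ref{teosofic-1} is essentially the paper's proof: lift the tail of $s$ (after the first synchronizing prefix) to an edge sequence $z\in X_G$, check that $z$ is not $P$-distributed via the pushforward identity on block frequencies, apply Theorem~\ref{maintheorem}, and then wrap the resulting edge-DFA inside a $\Sigma$-DFA that first waits for the synchronizing word and thereafter tracks the current node. The verification that the resulting object is a $\nu$-martingale is exactly the computation you outline, using that after synchronization $\nu(\sigma a\mid\sigma)$ equals the $P$-transition probability of the unique edge.

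For Case~\ref{teosofic-2} your basic idea is right, but you have misread the quantifier structure and this creates the ``main obstacle'' you describe. The statement asserts only that \emph{for the given $s$} there exists some DFA-generated $\nu$-supermartingale succeeding on $s$; the DFA is allowed to depend on $s$. Hence your critical level $j$ \emph{can} be hard-coded. The paper does exactly this: it sets $N_\alpha=\max\{N:\alpha\uh{N}\text{ occurs infinitely often in }s\}$, writes $\alpha\uh{N_\alpha+1}=\eta b c$, and builds a DFA that bets only when the last $N_\alpha$ symbols equal $\alpha\uh{N_\alpha}$, multiplying by $(1-\delta^*)$ if the next symbol is $c$ and by $(1+\delta)$ otherwise. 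With this choice the $(1-\delta^*)$ factor is incurred only finitely often while the $(1+\delta)$ factor is incurred infinitely often, so success is immediate and your multi-level balancing problem never arises. (Note also that the paper uses the dichotomy \emph{infinitely often / finitely often}, not \emph{positive upper density / zero upper density}; your formulation would work too but is less clean.)

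Your subset-of-states tracker is also unnecessary for the lower bound on $\nu(\rho c\mid\rho)$. The paper instead observes that for any $\rho=\gamma\eta b$ one has
\[
\nu(\gamma\eta b c\mid\gamma\eta b)=\sum_{ef\in A(bc)}\Bigl(\sum_{\sigma e\in A(\gamma\eta b)}P(\sigma e\mid A(\gamma\eta b))\Bigr)P(ef\mid e),
\]
a convex combination of the finitely many values $P(ef\mid e)$ with $ef\in A(bc)$, and takes $K=\min\{P(ef\mid e):ef\in A(bc),\ P(ef\mid e)>0\}$. This gives a uniform lower bound without any state-subset bookkeeping, so the DFA only needs to remember the last $N_\alpha$ symbols.
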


%

\begin{proof}[Proof of item~\ref{teosofic-1} of Theorem~\ref{teosofic}]
Since any leftward extension in $L(X)$ of a synchronizing word is a synchronizing word, we may assume that some prefix of $s$, say $\rho=s\uh{N'}$ is a synchronizing word with synchronizing node $i_0$. Denote by $f(i,a)$ the unique edge whose origin node is $i$ and whose label is $a$, whenever it exists (uniqueness is guaranteed because the presentation is right-resolvable). We construct by induction a sequence $z$ in $\mathcal{E}(G)^\NN$ that records the sequence of edges followed by $s$ after it reaches the synchronizing node:
\begin{itemize}
\item Define $z_1=f(i_0,s_{N'+1})$ and let $i_1$ be the destination node of $z_1$.
\item Assume $z_n$ and $i_n$ are defined. Define $z_{n+1}=f(i_n,s_{N'+n})$ and let $i_{n+1}$ be the destination node of $z_{n+1}$ (notice that $f(i_n,s_{N'+n})$ must exist because $s\in X$ and $(G,\mathcal{L})$ is a presentation of $X$)
\end{itemize}
Notice that by construction $\mathcal{L}^*(z)=T^{N'}(s)$. We will use this fact to show by contradiction that $z$ is not $P$-distributed, and then use the martingale on a DFA that succeeds on $z$ (guaranteed by Theorem~\ref{maintheorem}) to build an appropriate martingale on a DFA that succeeds on~$s$.

For any word $a_1\dots a_n\in\Sigma^*$ let $A(a_1\dots a_n)$ be the set of words $e_1\dots e_n\in\mathcal{E}(G)^*$ such that $\mathcal{L}(e_i)=a_i$ for $1\leq i\leq n$ (equivalently, $A(\alpha)=\mathcal{L}^{*-1}(\alpha)$). Clearly, we have
$
\nu(\alpha)=\sum_{\sigma\in A(\alpha)} P(\sigma)
$
and, by construction of $z$,
$
\occ(\alpha,T^{N'}(s)\uh{N})=\sum_{\sigma\in A(\alpha)} \occ(\sigma,z\uh{N}).
$
Then, if $z$ is $P$-distributed, $T^{N'}(s)$ must be $\nu$-distributed and $T^{N'}(s)$ is $\nu$-distributed if and only if $s$ is so. Hence, $z$ cannot be $P$-distributed. By Theorem~\ref{maintheorem} there is a martingale $\widehat{L}$ generated by a DFA $\widehat{M}=\langle \widehat{Q},\mathcal{E}(G),\hat{\delta},\hat{q}_0,\hat{Q}_f\rangle$ and a function $\hat{b}\colon \hat{Q}\times\mathcal{E}(G)\to\RR$, and such that $\limsup_N \hat{L}(s\uh{N})=\infty$.

Given an edge $e$, we write $d(e)$ for its destination node and $o(e)$ for its origin node. Let $M'$ be a DFA on $\Sigma$ having $Q'=(\hat{Q}\times\mathcal{N}(G))\cup\{q_g\}$ (for some unused garbage state $q_g$) as its set of states, $Q_f'=\hat{Q}_f\times\mathcal{N}(G)$ as its set of final states, $q_0'=(\hat{q}_0,i_0)$ as its initial state and a transition function $\delta'$ defined by:
\begin{align*}
\delta'((q,i),a)&=\begin{cases}
(\hat{\delta}(q,f(i,a)),d(f(i,a))) & \text{if $f(i,a)$ exists;}\\
q_g & \text{otherwise.}
\end{cases}\\
\delta'(q_g,a)&=q_g\quad\text{for all $a$.}
\end{align*}
We also define $b'\colon Q'\times\Sigma\to\RR$ as
\begin{align*}
b'((q,i),a)&=\hat{b}(q,f(i,a))\\
b'(q_g,a)&=1
\end{align*}
Notice that, by construction, the function $f$ defined by $f(\lambda)=1$ and
$
f(\alpha a)=b'(\delta'^*(q_0',\alpha),a)f(\alpha)
$
satisfies
\begin{equation}\label{martexito}\limsup_N f(T^{N'}(s)\uh{N})=\infty.
\end{equation}
since the sequence of betting factors induced by $T^{N'}(s)$ for $M'$ and $b'$ is the same as that induced by $z$ for $\hat{M}$ and $\hat{b}$.

Finally, write $\rho=a_1\dots a_l$ and define the DFA $M=\langle Q,\Sigma,\delta,a_1,Q_f'\rangle$, where
$Q=Q'\cup\{a_1,\dots,a_l\}$
and
$$\delta(q,a)=
\begin{cases}
\delta'(q,a)\quad &\mbox{if $q\in Q'$;}\\
a_2 &\mbox{if $a=q=a_1$;}\\
a_{i+1} &\mbox{if $a=q=a_i$, for $2\leq i\leq l-1$;}\\
q_0' &\mbox{if $a=q=a_{l}$;}\\
a_1 & \mbox{otherwise.}
\end{cases}
$$
This automaton waits till the synchronizing word $\rho$ is read. Once it finishes reading it the automaton transitions to the automaton $M'$ and stays there (see Figure~\ref{fig:automatonM}).
\begin{figure}
\begin{tikzpicture}[->,>=stealth',shorten >=1pt,auto,node distance=2.8cm,
                    semithick, node distance=2cm]
  \tikzstyle{every state}=[minimum size=9mm]

  \node[initial,state]    (A)                    {$a_1$};
  \node[state]            (B) [right of=A]       {$a_2$};
  \node[state]            (C) [right of=B]       {$a_3$};
  \node[state, draw=none] (D) [right of=C]       {$\dots$};
  \node[state]            (E) [right of=D]       {$a_l$};
  \node[state]            (F) [right of=E]       {$q'_0$};

\draw (9,-1.5) rectangle (13,1.5);
\node at (11.7,1.2) {automaton $M'$};

  \path (A) edge [above,bend left] node {$a_1$} (B)
        (B) edge [above,bend left=20] node {$\not=a_1$} (A)
        (B) edge [above,bend left] node {$a_2$} (C)
        (C) edge [above right,bend left=35] node {$\not=a_2$} (A)
        (C) edge [above,bend left] node {$a_3$} (D)
        (D) edge [above,bend left] node {$a_{l-1}$} (E)
        (E) edge [above,bend left=50] node {$\not=a_l$} (A)
        (E) edge [above left,bend left] node {$a_l$} (F);
\end{tikzpicture}
\caption{The automaton $M$. $\rho=a_1\dots a_l$ is a synchronizing word, and $M'$ ``translates'' $\widehat M$, which has inputs on the language of edges, $\mathcal{E}(G)^*$ to the language $L(X)\subseteq\Sigma^*$.}\label{fig:automatonM}
\end{figure}
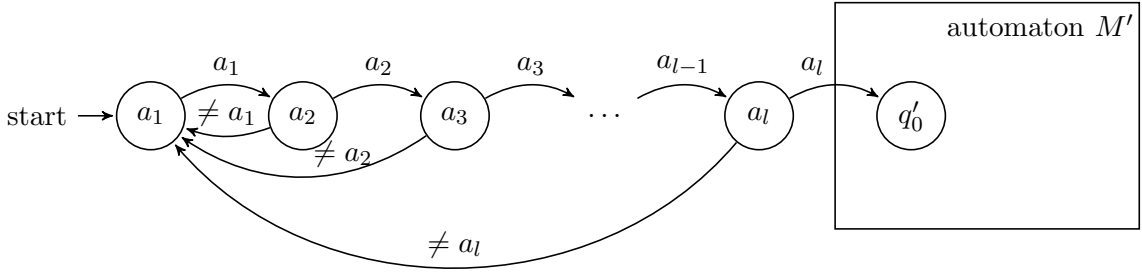
The function $b\colon Q\times\Sigma\to \RR$, computing the betting factors, must then be
$$
b(q,a)=
\begin{cases}1\quad &\text{if $q\notin Q'$;}\\
b'(q,a)&\text{otherwise.}
\end{cases}
$$
From~\eqref{martexito} and the construction of $f$, $M$ and $b$, it follows that the function $L$ defined by
\begin{align*}
L(\lambda)&=1\\
L(\alpha a)&=b(\delta^*(a_1,\alpha),a)L(\alpha)\quad\text{when $\alpha a\in L(X)$}
\end{align*}
satisfies
$
\limsup_N L(s\uh{N})=\infty.
$
Also, by construction, $L$ has the same betting factors as $\hat{L}$, which are 1, $(1+\delta)$ and $(1-\delta p^*/(1-p^*))$.

It remains to show that $L$ is a $\nu$-martingale, i.e.\ that
$
L(\alpha)=\sum_{a\in\Sigma}\nu(\alpha a \mid \alpha)L(\alpha a).
$
By definition of $L$ and $b$, this condition is trivially satisfied when $\rho$ is not a prefix of $\alpha$, since in that case $L(\alpha a)=L(\alpha)=1$. Hence, we only need to show
\begin{equation}\label{condmart}
1=\sum_{a\in\Sigma}\nu(\alpha a\mid\alpha)b(\delta^*(a_1,\alpha),a)
\end{equation}
when $\rho$ is a prefix of $\alpha$, say, $\rho=\alpha\uh{N_0}$.

Observe that
$$\nu(\alpha a\mid\alpha)=\frac{\sum_{\sigma\in A(\alpha)}P(\sigma f(d(\sigma_{|\sigma|}),a))}{\sum_{\sigma\in A(\alpha)}P(\sigma)}$$
and we may define $e=\sigma_{|\sigma|}$ and $h=f(d(e),a)$, independently of $\sigma$, since $\sigma$ is the path of edges followed by $\alpha$ and $\alpha$ has a prefix that is a synchronizing word, so that $\sigma_n$ is the same for all $\sigma\in A(\alpha)$ when $n\geq N_0$. Thus, the fact that $P$ is 1-step Markov implies
$$
\nu(\alpha a\mid\alpha)=\frac{\sum_{\sigma\in A(\alpha)}P(\sigma h\mid \sigma)P(\sigma)}{\sum_{\sigma\in A(\alpha)}P(\sigma)}=P(eh\mid e)\frac{\sum_{\sigma\in A(\alpha)}P(\sigma)}{\sum_{\sigma\in A(\alpha)}P(\sigma)}=P(eh\mid e)=P(\sigma h\mid\sigma),
$$
and writing $\eta=\sigma\dh{|\alpha|-N_0}$ (which is the same for all $\sigma\in A(\alpha)$ by the preceding remark) and $\tau=\mathcal{L}^*(\eta)$ we have that~\eqref{condmart} boils down  to
\begin{align*}
1&=\sum_{h:o(h)=d(e)} P(\sigma h\mid \sigma)b(\delta^*(a_1,\alpha),a)\nonumber\\
&=\sum_{h:o(h)=d(e)} P(\eta h\mid \eta)b'(\delta'^*(q_0',\tau),a)\nonumber\\
&=\sum_{h:o(h)=d(e)} P(\eta h\mid \eta)\hat{b}(\hat{\delta}^*(\hat{q}_0,\eta),f(d(e),a))\nonumber\\
&=\sum_{h:o(h)=d(e)} P(\eta h\mid \eta)\hat{b}(\hat{\delta}^*(\hat{q}_0,\eta),h)\frac{\hat{L}(\eta)}{\hat{L}(\eta)}\nonumber\\
&=\sum_{h:o(h)=d(e)} P(\eta h\mid \eta)\frac{\hat{L}(\eta h)}{\hat{L}(\eta)}.
\end{align*}
This last condition is met because $\hat{L}$ is a $P$-martingale.
\end{proof}

When no synchronizing word appears in $s$, the conditional probabilities that appear in the martingale condition may have infinitely many possible values, so that they will not be computed by a martingale generated by a DFA. Yet, we can still find a supermartingale on a DFA to handle this case.

\begin{proof}[Proof of item~\ref{teosofic-2} Theorem~\ref{teosofic}]
As before, define  $A(\alpha)=\mathcal{L}^{*-1}(\alpha)$.
By Theorem~\ref{teosincro} $(G,\mathcal{L})$ has a synchronizing word $\alpha$, and $\alpha\in X$. Hence, $A(\alpha)$ is not empty, and since $P$ is $L(X_G)$-supported we have $P(\sigma)>0$ for any $\sigma\in A(\alpha)$. Therefore, $\nu(\alpha)=\sum_{\sigma\in A(\alpha)}P(\sigma)>0$.

By hypothesis, $\alpha$ is not a factor of $s$, hence $\occ(\alpha,s\uh{N})=0$ for all $N$. Let $N_\alpha=\max\{N\colon \alpha\uh{N}\text{ appears infinitely many times in $s$}\}\cup \{0\}$ and write $\eta b c=\alpha\uh{N_\alpha+1}$, when $N_\alpha>0$ and $c=\alpha\uh{1}$ when $N_\alpha=0$.

Take any rational $0<\delta^*<1$. For any word $\gamma$ we have that
\begin{equation}\label{sumagama}
\nu(\gamma\eta b c\mid\gamma\eta b)=\sum_{ef\in A(bc)}\left(\sum_{\sigma\in A(\gamma\eta )}P(\sigma e \mid A(\gamma\eta b))\right)P(ef\mid e)
\end{equation}
when $N_\alpha>0$, and
$$
\nu(\gamma c\mid\gamma )=\sum_{f\in A(c)}\left(\sum_{\sigma\in A(\gamma )}P(\sigma \mid A(\gamma ))\right)P(\sigma f\mid\sigma)$$
when $N_\alpha=0$.

Since there are finitely many $ef\in A(bc)$ we may set, in case $N_\alpha>0$,
$$
K=\min\{P(ef\mid e)\colon  P(ef\mid e)\neq 0, ef\in A(bc)\},
$$
and then, noticing (\ref{sumagama}) consists of nonnegative summands and choosing any $f$ for which $P(ef\mid e)\neq 0$
$$
\nu(\gamma\eta b c\mid\gamma\eta b)\geq K\sum_{\sigma e\in A(\gamma\eta b)}P(\sigma e \mid A(\gamma\eta b))=K.
$$

Taking any strictly positive rational $\delta\leq \delta^* K$ we get
\begin{equation}\label{desigsuper}\delta\leq\delta^*K\leq\frac{\delta^*\nu(\rho c\mid\rho )}{1-\nu(\rho  c\mid \rho )}
\end{equation}
for any $\rho=\gamma\eta b$.

In case $N_\alpha=0$ we set $K=\min\{P(\sigma f\mid \sigma)\colon \ P(\sigma f\mid\sigma)\neq 0; f\in A(c)\}$ (which exists because $P$ is Markovian) and ~\eqref{desigsuper} holds.
Then, the function defined by
\begin{eqnarray*}
L(\emptyset)&=&1;\\
  L(\rho a)&=&\begin{cases}
    (1+\delta) L(\rho) & \text{if $\rho\dh{ N_\alpha}=\alpha\uh{N_\alpha}$ and $a\neq c$};\\
    \left(1-\delta^*\right)L(\rho) & \text{if $\rho\dh{ N_\alpha}=\alpha\uh{N_\alpha}$ and $a= c$};\\
    L(\rho) & \text{otherwise.}
  \end{cases}
\end{eqnarray*}
satisfies the supermartingale inequality, since
$$1\geq(1+\delta)(1-\nu(\rho c\mid \rho))+\nu(\rho c\mid\rho)(1-\delta^*)$$
follows from~\eqref{desigsuper} for all $\rho=\gamma\eta b$ (in case $N_\alpha>0$) and for all $\rho$ in case $N_\alpha=0$.

Observe that
$$
L(\rho)=\left(1-\delta^*\right)^{\occ((\alpha\uh{N_\alpha})c,\rho)}\prod_{\substack{a\in\Sigma\\ a\neq c}}\left(1+\delta\right)^{\occ((\alpha\uh{N_\alpha}) a,\rho)}
$$
Since the word $(\alpha\uh{N_\alpha})c$ occurs finitely many times in $s$, while $\alpha\uh{N_\alpha+1}$ occurs infinitely many times in $s$, we conclude that the above function goes to infinity when evaluated on increasing prefixes of $s$.
\end{proof}

\subsection{$P$-martingales on a DFA cannot beat $P$-distributed sequences}\label{sec:mart-cannot-beat}

Our next goal is to prove a converse to Theorem~\ref{maintheorem}, thus providing a complete characterization of sequences that are ``normal'' relative to some irreducible Markov measure, a characterization that generalizes the main result of Schnorr and Stimm in~\cite{Sch72}. Our proof will mirror their ideas closely. The main intuition is that a sequence where the average occurrences of blocks converge to some measure on those blocks should also have the average number of visits to any state of a DFA converge to some measure on the states. The main differences are that, in our case, some states are not final and that the probability of symbols and states are not independent.

To ease notation, we will only consider $1
$-step Markov measures. The reader may check that there is no loss in generality in this, since, as mentioned in \S\ref{sec:symdyn}, any $k$-step irreducible Markov measure on $\Sigma^\NN$ induces a $1$-step irreducible Markov measure $P^k$ on $\Theta_k^\NN$ and any $P$-martingale $M$ generated by a DFA on alphabet $\Sigma$ that succeeds on a sequence $s$ can be regarded as a $P^k$-martingale $M'$ generated by a DFA on alphabet $\Theta_k$. This martingale may not succeed on $\langle s:k\rangle$ but it must succeed on $\langle T^i(s):k\rangle$ for some $i\leq k$. Since $P$-distribution is unaffected by the removal of finitely many symbols, this suffices.

The main result of this section is an analogue of part a) of Theorem 4.1 in~\cite{Sch72}:

\begin{theorem}\label{thm:P-dist-implies-no-success}
Let $L\subseteq\Sigma^*$ be a prolongable and factorial language, let $P$ be an $L$-supported irreducible 1-step Markov measure on $\Sigma^\NN$ and let $s\in X_L$ be $P$-distributed.
Then no $P$-martingale generated by a DFA succeeds on $s$.
\end{theorem}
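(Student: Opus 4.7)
The plan is to mirror the classical Schnorr--Stimm argument from~\cite{Sch72}, adapting it to the $P$-martingale setting via the 1-step Markov structure of $P$. Let $f$ be a $P$-martingale on $L$ generated by a DFA $M = \langle Q, \Sigma, \delta, q_0, Q_f \rangle$ with betting factors $b(q, a)$. The goal is to bound $f$ on prefixes of $s$, i.e.\ show $\limsup_N f(s\uh N) < \infty$.

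First I would exploit the 1-step Markov property by augmenting the DFA's state to track the last symbol read, working with pairs $(q, c) \in Q \times \Sigma$ under transitions $(q, c) \mapsto (\delta(q, a), a)$. Because $P$ is 1-step Markov, $P(\sigma a \mid \sigma) = p_{c, a}$ depends only on the last symbol $c$ of $\sigma$, and the $P$-martingale identity becomes the local relation
\[
\sum_{a:\, p_{c, a} > 0} p_{c, a}\, b(q, a) = 1
\]
at every reachable $(q, c)$. Concavity of $\log$ (Jensen) then gives $\sum_a p_{c, a} \log b(q, a) \leq 0$, with equality iff $b(q, a) = 1$ for every $a$ admissible from $c$.

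Writing $n_{(q, c), a}(N)$ for the number of indices $n < N$ at which the DFA is in state $q$ with last symbol $c$ and reads $a$ next, one has
\[
\log f(s\uh N) = \sum_{(q, c), a} n_{(q, c), a}(N)\, \log b(q, a).
\]
The crux is to establish, for our $P$-distributed $s$, the convergence $n_{(q, c), a}(N)/N \to \pi(q, c)\, p_{c, a}$, where $\pi$ is a probability measure on $Q \times \Sigma$ supported on a single recurrent class $C$ of the finite Markov chain whose transition $(q, c) \to (\delta(q, a), a)$ has weight $p_{c, a}$. The strategy mirrors Proposition~\ref{prop:M-is-P}: use the convergence of block frequencies of $s$ to $P$ to construct an invariant measure for the induced chain from the empirical frequencies, and invoke Theorem~\ref{perronfrobenius} together with irreducibility of $(p_{c, a})$ to identify that measure with the unique Perron--Frobenius stationary distribution on each recurrent class reached by $s$. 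Granted this, dividing by $N$ and passing to the limit yields $\lim_N \log f(s\uh N)/N \leq 0$.

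To finish, if that limit is strictly negative then $f(s\uh N) \to 0$. If it equals zero, the equality case of Jensen forces $b(q, a) = 1$ for every $(q, c) \in C$ with $\pi(q, c) > 0$ and every $a$ with $p_{c, a} > 0$; since states outside $C$ are transient in the induced chain and thus visited only finitely often, there is an $N^*$ beyond which every transition the DFA takes on $s$ has betting factor $1$, so that $f(s\uh N) = f(s\uh{N^*})$ for $N \geq N^*$, giving $\limsup_N f(s\uh N) < \infty$. The main obstacle is the visit-frequency convergence: unlike the classical uniform-Bernoulli case where the DFA state can be tracked via bounded-memory block statistics, in our Markov setting both the source and the DFA have nontrivial long-range behaviour, and one must carefully combine the averaging properties of $P$-distributed $s$ with the uniqueness provided by Theorem~\ref{perronfrobenius} to extract a genuine limit for the visit statistics, and to rule out that zero-density states are visited infinitely often.
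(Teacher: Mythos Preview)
Your overall architecture matches the paper's: track $(\text{state},\text{symbol})$ pairs, write $\log f(s\uh N)$ as a sum over visits, apply Jensen to the local martingale identity, and split into the cases $\lim<0$ and $\lim=0$. The paper does exactly this in its proof, via the process $Z_n=(W_n,Y_n)$ and the convexity computation~\eqref{cuentaconvex}.

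The gap is in your proposed proof of the visit-frequency convergence. You say you will ``mirror Proposition~\ref{prop:M-is-P}'': build an invariant measure for the induced chain on $Q\times\Sigma$ out of the empirical visit frequencies and then invoke Theorem~\ref{perronfrobenius}. But for that argument to go through you must know that the empirical conditional frequency of reading $a$ next, given that the DFA is in state $q$ with last symbol $c$, equals $p_{c,a}$. In Proposition~\ref{prop:M-is-P} this is available because the contradiction hypothesis \emph{is} precisely $r_{\sigma,b}=P(\sigma b\mid\sigma)$; here there is no such hypothesis, and $P$-distribution only controls frequencies of bounded blocks, whereas the event ``state is $q$ at time $n$'' depends on the entire prefix $s_1\dots s_n$. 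So the step ``the empirical limit is invariant for the chain with transitions $p_{c,a}$'' is exactly what needs proof, and it does not follow from block statistics alone. The paper circumvents this with a genuinely different mechanism (Lemma~\ref{lemalim}): it first applies the ergodic theorem for the irreducible Markov chain on ${\sf A}_{q^*}$ to obtain $P$-almost-sure convergence of visit frequencies (Theorem~\ref{lemaergodico}), then transfers this to the particular sequence $s$ by a block-averaging argument---for large $k$, the set ${\sf B}_{q^*}^k(\epsilon)$ of length-$k$ words whose internal visit counts are $\epsilon$-close to $k_{a',q'}$ has $P$-mass $>1-\epsilon'$, and $P$-distribution of $s$ forces the empirical frequency of such ``good'' blocks in $\langle s:k\rangle$ to be $>1-\epsilon'$ as well. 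A secondary gap: your claim that ``states outside $C$ are transient in the induced chain and thus visited only finitely often'' conflates transience for the random chain with the behaviour of the deterministic trajectory of $s$; the paper needs Lemma~\ref{lemma3} together with $P$-distribution (so that the forcing word actually occurs in $s$) to conclude that the DFA eventually stays in a single ergodic class $[q^*]$.
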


Take some $M=\langle Q,\Sigma,\delta,q_0,Q_f\rangle$ accepting $L$. We may assume all accepting states in $Q_f$ are reachable from the initial state $q_0$.

\begin{definition}Let $M$ be a DFA and $q\in Q_f$, then $M_q$ is the DFA that has the same states, accepting states, alphabet and transition function as $M$ but which has $q$ as its initial state.
\end{definition}

Notice first that, since our language $L$ is factorial, if a word $\sigma$ is such that there are states $q,q'\in Q_f$ and $\delta^*(\sigma,q)=q'$, then $\sigma\in L$. That is, words that transition between accepting states must belong to the language. Also, factoriality implies that transition to an accepting state is not possible once a state outside $Q_f$ is reached (hence, $q_0\in Q_f$). Thus, we may assume that the complement of $Q_f$ consists of a single state $\tilde{q}$. Similarly, the fact that the language is prolongable implies that from any accepting state there is always a transition to an accepting state.

As in~\cite{Sch72}, we will define the relation $q\rightarrow q'$ when there is a word $\sigma\in L(X)$ such that $\delta^*(\sigma,q)=q'$ and $q\leftrightarrow q'$ if both $q\rightarrow q'$ and $q'\rightarrow q$. From the remarks in the preceding paragraph it is easy to see that $\leftrightarrow$ is an equivalence relation and that it allocates accepting states $q$ to classes $[q]$ different from $[\tilde{q}]$. Also, it is easy to check that the relation $\rightarrow$ induces a relation $\geq$ in the equivalence classes of $Q/\!\!\leftrightarrow$, where $[q]\geq[q']$ if and only if $q\rightarrow q'$ (we write $[q]>[q']$ when this holds and $[q]\neq [q'])$. We will call a class $[q]$ {\em ergodic} if $[q]\neq[\tilde{q}]$ and if there is no $q'\in Q_f$ such that $[q]>[q']$ (i.e., $[q]$ is minimal among the classes of accepting states).

In order to prove the main result of this section we must make some considerations regarding the interaction of Markov measures that are $L$-supported and a DFA that accepts~$L$.

For any $q'\in Q_f$ consider the maps $\phi_{q'}\colon X_L\rightarrow Q^\NN$ and $\Phi_{q'}\colon X_L\rightarrow (\Sigma\times Q)^\NN$ defined by the following rules:
\begin{align*}
\phi_{q'}(x)_1&=q'\\
\phi_{q'}(x)_{n+1}&=\delta(x_n,\phi_{q'}(x)_{n})\\
\Phi_{q'}(x)_n&=(x_n,\phi_{q'}(x)_n)
\end{align*}
The invariant, $L$-supported, irreducible 1-step Markov measure $P$ of Theorem~\ref{thm:P-dist-implies-no-success} allows us to define some random processes, that is, random variables indexed by natural numbers $n$. For any $n\geq 1$ let $W_n$, $Y_n^{q}$ and $Z_n^{q'}$ be measurable functions (i.e. random variables) on the probability space $(X_L,\mathcal{B},P)$ %
\begin{align*}
W_n(x)&=x_n\\
Y_n^{q'}(x)&=\phi_{q'}(x)_n\\
Z_n^{q'}(x)&=\Phi_{q'}(x)_n=(W_n,Y_n^{q})
\end{align*}
As is customary for random variables, we will omit the specification of the element $x$ of the probability space on which the random variable is being evaluated. Also, when $q'=q_0$, we will drop the subscripts and superscripts and write $\phi$, $\Phi$, $Y_n$ and $Z_n$.

It is a commonplace observation in the theory of Markov processes \cite[Theorem 1.1.2]{Norr97} that the random process $(Z_n^{q'})_{n\in\NN}$ is Markov of order 1 or, equivalently, that the measure $P\circ\Phi_{q'}^{-1}$ on $(\Sigma\times Q)^\NN$ is 1-step Markov.
Moreover, while the overall measure depends on the choice of $q'$, the transition matrix does not.

For any ergodic class $[q^*]$, write
$${\sf A}_{q^*}=\{(a,q)\in\Sigma\times[q^*]\colon \ \delta(a,q)\notin[\tilde{q}]\}$$
for the tuples of ``admisible" pairs in $\Sigma\times[q^*]$. And for any $(a,q)\in {\sf A}_{q^*}$ we define a measure $\widehat{P}_{a,q}$ on $(\Sigma\times Q)^\NN$ by letting
$$
\widehat{P}_{a,q}(z_1\dots z_l)=P\left(\{x\colon \ \forall\ 1\leq i\leq l,\ Z_{N+i}=z_i\}\mid\{x\colon \ Z_N=(a,q)\}\right)
$$
for any $N$ such that $P(\{x\colon \ Z_N=(a,q)\})>0$ (equivalently, any $N$ for which there is a word $\sigma$ such that $|\sigma|=N-1$ and $\delta^*(\sigma,q_0)=q$). This measure is the probability distribution of the process $(Z_{N+i})_{i\in\NN}$ conditioned on $Z_n=(a,q)$ and by \cite[Theorem 1.1.2]{Norr97} it is a Markov measure independent of $N$ and having the same transition matrix as $P\circ\Phi_{q'}^{-1}$ regardless of $(a,q)$. When restricted to a given ergodic class $[q^*]$ we will denote this transition matrix by
$$
\widehat{P}^{q^*}=(\widehat{P}^{q^*}_{z,z'})_{z,z'\in{\sf A}_{q^*}}.
$$
\begin{observation}\label{obsetransit}
For $z=(a,q)$ and $z'=(a',q')$ we have
$$
\widehat{P}_{z,z'}=
\begin{cases}P(aa'\mid a) & \text{if $\delta(a,q)=q'$;}\\
0& \text{otherwise.}
\end{cases}
$$
\end{observation}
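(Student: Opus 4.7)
The plan is to unpack the definition of $\widehat{P}_{z,z'}$ and reduce the computation to two observations: that $Y_{N+1}$ is a deterministic function of $Z_N$ via the transition function $\delta$, and that $P$ is $1$-step Markov on the coordinate process $(W_n)_n$. The argument then splits into the two cases that appear in the statement.

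First I would fix $N$ with $P(\{x : Z_N = (a,q)\}) > 0$ (such an $N$ exists, as noted in the paragraph preceding the observation). Unfolding definitions, the event $\{Z_N = (a,q)\}$ equals $\{x_N = a\}\cap\{\phi(x)_N = q\}$, which lies in $\sigma(W_1,\dots,W_N)$ since $\phi(x)_N$ is determined by $x_1,\dots,x_{N-1}$. From the recursion $\phi(x)_{N+1} = \delta(x_N,\phi(x)_N)$, on this event we have $\phi(x)_{N+1} = \delta(a,q)$ deterministically. Hence if $q'\neq\delta(a,q)$ the intersection $\{Z_N=(a,q)\}\cap\{Z_{N+1}=(a',q')\}$ is empty, so $\widehat{P}_{z,z'} = 0$, which is the second line of the stated formula.

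In the remaining case $q' = \delta(a,q)$, the condition $\phi(x)_{N+1} = q'$ is automatic on $\{Z_N=(a,q)\}$, so $\widehat{P}_{z,z'}$ collapses to $P(x_{N+1}=a' \mid Z_N=(a,q))$. Because $\{Z_N=(a,q)\}$ refines $\{x_N=a\}$ by the event $\{\phi(x)_N=q\}\in \sigma(W_1,\dots,W_{N-1})$, the $1$-step Markov property of $P$ gives $P(x_{N+1}=a' \mid Z_N=(a,q)) = P(x_{N+1}=a' \mid x_N=a) = P(aa'\mid a)$. As a by-product, the answer is independent of the particular $N$ chosen, which also confirms the claim (made just before the observation) that $\widehat{P}_{a,q}$ is well defined. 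I do not expect any serious obstacle; the only point requiring care is this last step, where one must verify that additionally conditioning on the DFA state does not disturb the Markov transition, and this works precisely because $\phi(x)_N$ is measurable with respect to the strict past of $x_N$.
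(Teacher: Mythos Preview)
Your proof is correct. The paper states this as an Observation with no proof at all---it is simply recorded as an immediate consequence of the definitions---so there is nothing to compare against; your unpacking via the deterministic dependence of $Y_{N+1}$ on $Z_N$ together with the $1$-step (homogeneous) Markov property of $P$ is exactly the intended justification.
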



\begin{lemma} Let $[q^*]$ be an ergodic class and $(a,q)\in{\sf A}_{q^*}$. Then the measure $\widehat{P}_{a,q}$ is supported on ${\sf A}_{q^*}^\NN$ and has an irreducible transition matrix
\end{lemma}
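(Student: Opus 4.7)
The plan is to handle the two assertions separately. For support, I would argue by induction on $i \ge 0$ that $Z_{N+i} \in {\sf A}_{q^*}$ holds $\widehat{P}_{a,q}$-a.s., with the base case $i=0$ being the hypothesis $(a,q) \in {\sf A}_{q^*}$. In the inductive step, starting from $Z_{N+i} = (b, q'') \in {\sf A}_{q^*}$, the successor is $Z_{N+i+1} = (W_{N+i+1}, \delta(b, q''))$. Its state component $\delta(b, q'')$ lies in $Q_f$ by admissibility of $(b, q'')$, and then in $[q^*]$ by minimality of $[q^*]$ in $Q_f/\!\leftrightarrow$. The remaining condition $\delta(W_{N+i+1}, \delta(b, q'')) \in Q_f$ is just $\phi(x)_{N+i+2} \in Q_f$, which is $P$-a.s.\ (hence $\widehat{P}_{a,q}$-a.s.) true because $P$ is $L$-supported and so $x \in X_L$ forces the DFA trajectory from $q_0$ to remain in $Q_f$ at every step.

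For irreducibility, fix arbitrary $(a,q),(a',q') \in {\sf A}_{q^*}$ and set $q_1 = \delta(a,q) \in [q^*]$. Since $q_1$ and $q'$ both lie in $[q^*]$ and are therefore $\leftrightarrow$-related, the definition of $\to$ supplies some $\eta \in L$ with $\delta^*(\eta, q_1) = q'$. I would propose the path induced by the word $w = a\eta a'$: starting at $(a,q)$, the product chain successively visits
\[
(a,q),\ (\eta_1, q_1),\ (\eta_2, \delta(\eta_1, q_1)),\ \ldots,\ (\eta_{|\eta|}, \delta^*(\eta_1\cdots\eta_{|\eta|-1}, q_1)),\ (a', q').
\]
Every state component lies in $[q^*]$: they are intermediate states of the DFA run of $\eta$ from $q_1 \in [q^*]$ ending at $q' \in [q^*]$, so absorption of $\tilde q$ keeps them in $Q_f$, and minimality of $[q^*]$ in $Q_f/\!\leftrightarrow$ then places each of them in $[q^*]$. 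Hence every pair on the path lies in ${\sf A}_{q^*}$.

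It remains to show that each transition has positive probability, i.e.\ that $p_{a,\eta_1}$, each $p_{\eta_i,\eta_{i+1}}$, and $p_{\eta_{|\eta|},a'}$ is nonzero. Pick any $\tau_1$ with $\delta^*(\tau_1, q_0) = q$, available since every accepting state is reachable from $q_0$; then $\delta^*(\tau_1 a, q_0) = \delta(a,q) \in Q_f$ by $(a,q) \in {\sf A}_{q^*}$, so $\tau_1 a \in L$. Now
\[
\delta^*(\tau_1 w, q_0) = \delta^*(w, q) = \delta(a', \delta^*(\eta, q_1)) = \delta(a', q') \in Q_f,
\]
so $\tau_1 w \in L$; by $L$-supportedness $P(\tau_1 w) > 0$, and since $P$ is $1$-step Markov this forces every adjacent $p$-entry along $\tau_1 a \eta a'$ to be positive, in particular along $a\eta a'$. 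Combined with Observation~\ref{obsetransit}, this exhibits the required positive-probability path from $(a,q)$ to $(a',q')$ in $\widehat{P}^{q^*}$.

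The main conceptual obstacle is coupling the DFA run from $q_0$, which governs membership in $L$ and hence which $p$-entries of $P$ are positive, with the DFA run from $q$, which governs the product chain started at $(a,q)$. The device of prepending a reaching prefix $\tau_1$ reduces the $L$-membership of $\tau_1 w$ to a single admissibility check $\delta(a',q') \in Q_f$, converting a DFA-path question into exactly the $P$-positivity statement needed.
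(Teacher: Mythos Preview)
Your proof is correct and follows essentially the same strategy as the paper's: for support, use that $P$ lives on $X_L$ so the DFA never leaves $Q_f$, then invoke minimality of the ergodic class $[q^*]$; for irreducibility, connect the two states via a word $\eta$ inside $[q^*]$, prepend a prefix reaching $q$ from $q_0$, and use $L$-supportedness of $P$ to force all consecutive transition probabilities to be positive. The paper phrases the support argument contrapositively (any position with positive $\widehat P_{a,q}$-mass must lie in ${\sf A}_{q^*}$) and the irreducibility argument via positivity of a cylinder in $\widehat P_{a,q}$ rather than directly via the transition matrix, but these are cosmetic differences. One stylistic remark: in your irreducibility argument you reuse the letters $(a,q)$ for an \emph{arbitrary} element of ${\sf A}_{q^*}$, clashing with the fixed $(a,q)$ in the lemma statement; this is harmless since the transition matrix $\widehat P^{q^*}$ does not depend on the starting pair, but renaming to, say, $(a_1,q_1)$ (as the paper does) would avoid confusion.
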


\begin{proof}Suppose that, for some $N_1$, $\widehat{P}_{a,q}(\{z\in(\Sigma\times Q)^\NN\colon \ z_{N_1}=(a',q')\})>0$. This means there are words $\tau,\rho\in L$ such that $\delta^*(\tau,q_0)=q$, $|\rho|=N_1-1$, $\delta^*(\rho,\delta(a,q))=q'$ and $P(\tau a\rho a')>0$. But since $P$ is $L(X)$-supported this means $\tau a\rho a'\in L(X)$ and therefore $\hat{q}=\delta^*(\tau a\rho a',q_0)$ is an accepting state and $\hat{q}=\delta(\rho a',q)$, so that $[\hat{q}]\leq [q]$. But since $[q]$ is an ergodic class it must be the case that $[q']=[q]=[q^*]$. Hence, $\widehat{P}_{a,q}$ is supported on ${\sf A}_{q^*}^\NN$.

To see that the transition matrix of $\widehat{P}_{a,q}$ is irreducible, take any $z_1=(a_1,q_1),z'=(a',q')\in{\sf A}_{q^*}$. We want to find some word $\rho=z_2\dots z_m$ in ${\sf A}_{q^*}^*$ such that
\begin{equation}\label{probirred}
\widehat{P}_{a,q}((a_1,q_1)\rho(a',q'))>0.
\end{equation}

Let $q_2=\delta(a_1,q_1)$. Since $\widehat{P}_{a,q}$ is supported on ${\sf A}_{q^*}^\NN$ it follows that $q_2$ is also in the ergodic class $[q^*]$, and since $q'$ also belongs to the same class $[q^*]$ by hypothesis, then there must be a word $\sigma\in L(X)$ such that $\delta^*(\sigma,q_2)=q'$. Write $\sigma=a_2\dots a_m$ and inductively define $q_{i+1}=\delta(a_i,q_i)$ for $2\leq i< m$. Let us show the word $z_2\dots z_m$ for $z_i=(a_i,q_i)$ satisfies~\eqref{probirred}. Take any word $\tau$ such that $\delta^*(\tau,q_0)=q$. Then $\delta^*(\tau aa_1 \sigma,q_0)=q'$ and since $(a',q')\in{\sf A}_{q^*}^*$ then $\delta(a',q')\notin[\tilde{q}]$, so that $\delta^*(\tau a a_1 \sigma a',q_0)\in Q_f$ and therefore $\tau a a_1 \sigma a'\in L(X)$. Since $P$ is $L(X)$-supported, this means $P(\tau a a_1 \sigma a')>0$, and from $[\tau a a_1 \sigma a']\subseteq \Phi^{-1}\left(T^{-|\tau|}[(a,q)(a_1,q_1)z_2\dots z_m(a',q')]\right)$ we derive~\eqref{probirred}.
\end{proof}

Now, $\widehat{P}_{a,q}$ is 1-step Markov with an irreducible transition matrix. Given a 1-step Markov measure with irreducible transition matrix, the ergodic theorem for Markov processes (Theorem 1.10.2 from~\cite{Norr97}) ensures that the Cesaro averages of cylinder characteristic functions converge almost surely to a constant that depends only on the transition matrix. In our context, that result has to be restated in the following form:
\begin{theorem}\label{lemaergodico}Let $P$ and $L$ be as in Theorem~\ref{thm:P-dist-implies-no-success} and $M$ be a DFA accepting $L$. Let $[q^*]$ be an ergodic class and $(a,q),(a',q')\in{\sf A}_{q^*}$. Then there is some constant $ k_{a',q'}$ independent of $(a,q)$ such that
$$
\widehat{P}_{a,q}\left(z\colon \sum_{i=1}^N\frac{\mathcal{X}_{(a',q')}(z_i)}{N}\to k_{a',q'}\right)=1.
$$
Moreover, the vector $\psi_{q^*}=( k_{a',q'})_{(a',q')\in{\sf A}_{q^*}}$ is a distribution on ${\sf A}_{q^*}$ (that is, it has nonnegative entries that add up to $1$) and is a left eigenvector of the transition matrix $\widehat{P}^{q^*}$, that is
$\psi_{q^*}\widehat{P}^{q^*}=\psi_{q^*}.$

\end{theorem}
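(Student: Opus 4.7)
The plan is to obtain Theorem~\ref{lemaergodico} as an essentially direct consequence of the classical ergodic theorem for finite-state, irreducible Markov chains, applied to the measure $\widehat{P}_{a,q}$ whose relevant structural properties have already been established. All the probabilistic work has been done in the preceding lemma; what remains is to invoke the right off-the-shelf result and identify the limiting constants with the stationary distribution.

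First, I would record the setup: by the lemma immediately before the theorem, $\widehat{P}_{a,q}$ is a $1$-step Markov measure on $(\Sigma\times Q)^\NN$, supported on ${\sf A}_{q^*}^\NN$, with the irreducible transition matrix $\widehat{P}^{q^*}$; by construction $\widehat{P}_{a,q}(z_1=(a,q))=1$, so the initial distribution is the Dirac mass at $(a,q)$. Since ${\sf A}_{q^*}$ is finite and the transition matrix is irreducible, the chain is positive recurrent and admits a unique stationary distribution $\psi_{q^*}$ on ${\sf A}_{q^*}$, i.e.\ a probability vector satisfying $\psi_{q^*}\widehat{P}^{q^*}=\psi_{q^*}$.

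Next, I would apply Theorem~1.10.2 of~\cite{Norr97} (the ergodic theorem for irreducible Markov chains), which guarantees that for every starting state $(a,q)\in{\sf A}_{q^*}$ and every state $(a',q')\in{\sf A}_{q^*}$,
\[
\widehat{P}_{a,q}\Bigl(z\colon \tfrac{1}{N}\textstyle\sum_{i=1}^N\mathcal{X}_{(a',q')}(z_i)\to \psi_{q^*}(a',q')\Bigr)=1.
\]
Setting $k_{a',q'}=\psi_{q^*}(a',q')$ gives the convergence claimed in the theorem, and the independence from $(a,q)$ is automatic because the right-hand side depends only on the transition matrix $\widehat{P}^{q^*}$ (which by construction is the same for all choices of starting pair). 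The eigenvector and distribution properties of $\psi_{q^*}$ are then exactly those of the stationary distribution.

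The main thing to check carefully — and the only possible obstacle — is not any deep probabilistic content but a bookkeeping point: that the hypotheses of the cited ergodic theorem really apply to $\widehat{P}_{a,q}$ as defined here. This amounts to confirming that $\widehat{P}_{a,q}$ is a genuine homogeneous Markov measure with a well-defined, $N$-independent transition matrix (already argued via \cite[Theorem 1.1.2]{Norr97}), that restricting to ${\sf A}_{q^*}$ is legitimate because $\widehat{P}_{a,q}$ is supported there (preceding lemma), and that irreducibility of $\widehat{P}^{q^*}$ on the finite set ${\sf A}_{q^*}$ yields positive recurrence and hence a unique stationary distribution. Once these points are in place the theorem follows immediately.
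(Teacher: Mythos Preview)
Your proposal is correct and matches the paper's own treatment: the paper does not give a separate proof of Theorem~\ref{lemaergodico} but simply presents it as a restatement of the ergodic theorem for Markov chains (Theorem~1.10.2 in~\cite{Norr97}), relying on the preceding lemma for irreducibility and support on ${\sf A}_{q^*}^\NN$. One small slip: from the definition $\widehat{P}_{a,q}(z_1\dots z_l)=P(Z_{N+1}=z_1,\dots,Z_{N+l}=z_l\mid Z_N=(a,q))$, the initial distribution of $z_1$ is the one-step transition from $(a,q)$, not the Dirac mass at $(a,q)$; this is harmless, since the ergodic theorem applies for any initial distribution on an irreducible finite chain.
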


At this point, we would like to prove an analogue of Lemma 4.5 from~\cite{Sch72}, which states a precise formulation of the idea that if a sequence $s$ is $P$-distributed then the joint sequence of visited states and symbols should also be distributed according to some measure derived from $P$. If the sequence of visited states were eventually concentrated in some ergodic class $[q^*]$, then $ k_{a,q}$ would be the natural candidate for that derived measure. The following simple but useful result will allow us to make that assumption regarding an eventual ergodic class~$[q^*]$.

\begin{lemma}\label{lemma3}Given a DFA $M$ accepting a factorial and prolongable language $L$ and a class of accepting states $[q]\in Q_f/\!\!\leftrightarrow$, there is a word $\sigma\in L$ such that for all $s\in [q]$, the class $[\delta^*(\sigma,s)]$ is ergodic or is equal to $[\tilde{q}]$.
\end{lemma}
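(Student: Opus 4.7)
The plan is a greedy iterative construction. Enumerate $[q]=\{s_1,\ldots,s_N\}$ and build $\sigma$ as a concatenation $\mu_1\mu_2\cdots\mu_N$, where the $i$-th piece $\mu_i$ is chosen to push the image of $s_i$ into an ergodic class (or into $[\tilde q]$) while keeping the images of $s_1,\ldots,s_{i-1}$ where we already put them. The key structural observation making this work is that ergodic classes and $[\tilde q]$ are both \emph{absorbing} under the action of any word: from any state in an ergodic class $[e]$, every transition of $M$ leads back into $[e]$ or to $\tilde q$ (since $[e]$ is minimal in $Q_f/\!\!\leftrightarrow$ and the complement of $Q_f$ is the single sink $\tilde q$), and $\tilde q$ is a fixed point. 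Consequently, once $\delta^*(\sigma^{(i)},s_j)$ has been placed into an ergodic class or into $[\tilde q]$, any further extension of $\sigma^{(i)}$ preserves this, and the invariant propagates through the construction.

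The core of the argument is a subsidiary claim, which I would prove by induction on the height of $[t]$ in the finite DAG of accepting classes: for every $t\in Q_f$ there is a word $\nu$ with $\delta^*(\nu,t)$ ergodic or equal to $\tilde q$. If $[t]$ is itself ergodic take $\nu=\epsilon$. Otherwise non-ergodicity gives some $q'\in Q_f$ with $[q']<[t]$, and combining the relation $t\leftrightarrow q$ (for a fixed representative $q$ of $[t]$) with a word witnessing $[t]>[q']$ produces a word that strictly lowers $t$ in the class DAG; the inductive hypothesis then finishes the job. Plugging this into the main construction with $t=\delta^*(\sigma^{(i-1)},s_i)$ supplies $\mu_i$, and iterating $N$ times yields a word $\sigma$ with the required state-placing property.

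The point I expect to need the most care is the requirement $\sigma\in L$, namely that the assembled word labels an accepting path from $q_0$ and not merely some string over $\Sigma$ that leads to $\tilde q$ when read from $q_0$. Since all accepting states are reachable from $q_0$, one has $[q_0]\geq [q]$, so the natural fix is to run the same iterative procedure on the enlarged set $[q]\cup\{q_0\}$ and to exploit the freedom in the subsidiary claim to send $q_0$ specifically into an ergodic accepting class rather than into $[\tilde q]$; equivalently, one can fix a representative $s^*\in [q]$ that the construction already routes into an ergodic class and prepend an accepting path from $q_0$ to $s^*$, checking that this extra prefix does not invalidate the state condition on the remaining $s_j$'s. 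This bookkeeping is the only real obstacle; the genuine structural content of the lemma lies in the greedy lowering procedure driven by the absorbing nature of ergodic classes.
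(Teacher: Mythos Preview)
Your approach is essentially the paper's: a greedy iterative construction over an enumeration of $[q]$, appending at each stage a word that lowers the current state into an ergodic class, and relying on the absorbing nature of ergodic classes and $[\tilde q]$ to preserve what was achieved at earlier stages. The paper's inductive construction of $\sigma_{i+1}=\sigma_i\sigma$ is exactly your $\sigma^{(i)}=\sigma^{(i-1)}\mu_i$.

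Where you diverge is in the handling of $\sigma\in L$, which you overcomplicate. The relevant observation is the one the paper records just before the lemma: since $L$ is factorial (and all accepting states are reachable), any word that transitions between two accepting states lies in $L$. Now your own subsidiary claim, as you prove it, always lands $t$ in an \emph{ergodic} accepting class, never in $\tilde q$ (the induction runs over the DAG of accepting classes). Hence at each stage $i$ where you actually extend, the new word $\sigma^{(i)}$ sends $s_i\in[q]\subseteq Q_f$ to an ergodic state in $Q_f$, and therefore $\sigma^{(i)}\in L$ automatically. At stages where no extension is needed, $\sigma^{(i)}=\sigma^{(i-1)}\in L$ by the previous step. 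There is no need to bring $q_0$ into the enumeration or to prepend an accepting path; the paper's proof uses exactly this shortcut, writing that $\sigma_{i+1}$ ``belongs to $L$ because $L$ is factorial and $\sigma_{i+1}$ transitions between accepting states $q_{i+1}$ and $q'$.''
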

\begin{proof}If $[q]$ is ergodic then any word $\sigma\in L$ will do, so let us assume $[q]$ is not ergodic.
Let us write $[q]=\{q_0,\dots,q_m\}$. The proof will show by induction on $i$that there are words $\sigma_i\in L$ such that, for all $j\leq i$, $[\delta^*(\sigma_i,q_j)]=[\tilde{q}]$ or $[\delta^*(\sigma_i,q_j)]$ is ergodic.
For $i=1$, since $[q]$ is not ergodic there must be some ergodic $[q']$ such that $[q]>[q']$. Hence, we can choose a word $\sigma_1\in L$ such that $\delta^*(\sigma_1,q_1)\in [q']$.

For the inductive step, if $[\delta^*(\sigma_i,q_{i+1})]=[\tilde{q}]$ or $[\delta^*(\sigma_i,q_{i+1})]$ is ergodic then we are done. Otherwise, there must be some ergodic $[q']$ such that $[\delta^*(\sigma_i,q_{i+1})]>[q']$. Hence, there is a word $\sigma$ such that $[\delta^*(\sigma,\delta^*(\sigma_i,q_{i+1}))]=[q']$ is ergodic, and we choose $\sigma_{i+1}=\sigma_i\sigma$, which belongs to $L$ because $L$ is factorial and $\sigma_{i+1}$ transitions between accepting states $q_{i+1}$ and $q'$ (for some representative of $[q']$).
\end{proof}

We need some notation for the 2-tuples of letters and states visited by words of finite and fixed length. For this purpose, let $\Phi_q^k\colon \Sigma^k\rightarrow (\Sigma\times Q)^k$ be defined as
\begin{align*}
\Phi_q^1(a)&=(a,q)\\
\Phi_q^{k+1}(\sigma a)&=\Phi_q^k(\sigma)(a,\delta(\Phi_q^k(\sigma)_k
\end{align*}
We now get the desired generalization of Lemma 4.5 from~\cite{Sch72}.

\begin{lemma}\label{lemalim}
Let $L$, $P$ and $s$ be as in Theorem~\ref{thm:P-dist-implies-no-success} and $M$ be some DFA accepting $L$.
Then there is some ergodic class $[q^*]$ such that, for all $(a',q')\in{\sf A}_{q^*}$, we have
\begin{equation}\label{limnorm}
\lim_N\sum_{n=1}^N\frac{\mathcal{X}_{(a',q')}(Z_n(s))}{N}= k_{a',q'}.
\end{equation}
\end{lemma}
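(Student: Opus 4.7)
The plan splits into two stages: (i) show that the state trajectory $\phi(s)$ eventually remains in a fixed ergodic class $[q^*]$, and (ii) show that the empirical distribution of $Z_n(s)$ on ${\sf A}_{q^*}$ converges to the unique stationary distribution $\psi_{q^*}$.

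For (i), since $s\in X_L$ every prefix of $s$ belongs to $L$, so $\phi(s)_n\in Q_f$ for every $n$; in particular $\phi(s)$ never visits $\tilde q$. The sequence of classes $[\phi(s)_n]$ is non-increasing in the partial order $\geq$ (each DFA step witnesses $[\phi(s)_n]\geq[\phi(s)_{n+1}]$), and since $Q_f/\!\!\leftrightarrow$ is finite, it stabilizes from some index $N_0$ onward at a class $[q^{**}]$. I would then apply Lemma~\ref{lemma3} to $[q^{**}]$ to obtain a word $\sigma\in L$ whose reading from any state of $[q^{**}]$ lands in an ergodic class or in $\{\tilde q\}$. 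Because $P$ is $L$-supported we have $P(\sigma)>0$, and the $P$-distribution of $s$ forces $\sigma$ to occur in $s$ at infinitely many positions $n\geq N_0$. At such a position, $\phi(s)_{n+|\sigma|}\ne\tilde q$, so $[\phi(s)_{n+|\sigma|}]$ is ergodic, and stabilization identifies it with $[q^{**}]$. Set $[q^*]=[q^{**}]$.

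For (ii), for $n\geq N_0$ the process $Z_n(s)$ takes values in ${\sf A}_{q^*}$, and the empirical measures $\mu_N=\frac{1}{N}\sum_{n=N_0}^{N_0+N-1}\delta_{Z_n(s)}$ lie in the compact simplex over ${\sf A}_{q^*}$. By the uniqueness clause of Theorem~\ref{perronfrobenius} applied to the irreducible transition matrix $\widehat P^{q^*}$, it suffices to show every cluster point $\mu$ of $(\mu_N)$ is stationary, i.e.\ $\mu=\mu\widehat P^{q^*}$. Using Observation~\ref{obsetransit}, this amounts to establishing, for every $(a',q')\in{\sf A}_{q^*}$,
\begin{equation*}
\mu(a',q')=\sum_{\substack{(a,q)\in{\sf A}_{q^*}\\ \delta(a,q)=q'}}\mu(a,q)\,P(a'\mid a).
\end{equation*}
Combining the two bookkeeping identities
\begin{equation*}
\occ((a',q'),\Phi(s)\uh N)=\sum_{(a,q):\delta(a,q)=q'}\occ((a,q)(a',q'),\Phi(s)\uh N)+O(1)
\end{equation*}
and
\begin{equation*}
\occ((a,q),\Phi(s)\uh N)=\sum_{a'}\occ((a,q)(a',q'),\Phi(s)\uh N)+O(1),
\end{equation*}
the problem reduces to the conditional-frequency claim
\begin{equation*}
\frac{\occ((a,q)(a',q'),\Phi(s)\uh N)}{\occ((a,q),\Phi(s)\uh N)}\longrightarrow P(a'\mid a)
\end{equation*}
for every $(a,q)\in{\sf A}_{q^*}$ whose denominator grows linearly in $N$.

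The hard part is this last claim. Although the numerator counts positions with $s_n=a$, $s_{n+1}=a'$ and $\phi(s)_n=q$, the condition $\phi(s)_n=q$ depends on the full prefix $s\uh{n-1}$ rather than a bounded suffix, so it is not directly a block-frequency statement about $s$. My plan to overcome this is to approximate the state condition by finite memory: for each integer $L$, partition positions $n\geq N_0+L$ by the suffix $\rho=s_{n-L}\dots s_{n-1}$ and by $\phi(s)_{n-L}\in[q^*]$; on each cell the event $\phi(s)_n=q$ becomes a function of $\rho$ alone, since $\phi(s)_n=\delta^*(\rho,\phi(s)_{n-L})$. The 1-step Markov property of $P$ combined with the $P$-distribution of $s$ yields
\begin{equation*}
\frac{\occ(\rho aa',s\uh{N+L+1})}{\occ(\rho a,s\uh{N+L})}\longrightarrow P(a'\mid a),
\end{equation*}
uniformly in $\rho$, and irreducibility of $\widehat P^{q^*}$ guarantees that the fraction of positions $n$ for which the state $\phi(s)_{n-L}$ still obstructs the identification of $\phi(s)_n$ vanishes as $L\to\infty$. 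Aggregating these estimates over the finitely many $\rho\in\Sigma^L$ and letting $L\to\infty$ yields the desired conditional-frequency limit; this is the Schnorr--Stimm ``carryover'' argument adapted to the Markov setting.
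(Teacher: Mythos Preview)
Your stage (i) is correct and matches the paper's argument. The gap is in stage (ii), in the justification of the conditional-frequency claim
\[
\frac{\occ((a,q)(a',q'),\Phi(s)\uh{N})}{\occ((a,q),\Phi(s)\uh{N})}\longrightarrow P(a'\mid a).
\]
You assert that ``irreducibility of $\widehat{P}^{q^*}$ guarantees that the fraction of positions $n$ for which the state $\phi(s)_{n-L}$ still obstructs the identification of $\phi(s)_n$ vanishes as $L\to\infty$.'' This is false in general. Consider the DFA with $Q_f=\{q_1,q_2\}$ accepting $\Sigma^*$ and $\delta(q_i,a)=q_{3-i}$ for every $a\in\Sigma$, so the state merely tracks parity. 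Then $[q^*]=\{q_1,q_2\}$ is ergodic and $\widehat{P}^{q^*}$ is irreducible, yet for every word $\rho$ one has $\delta^*(\rho,q_1)\neq\delta^*(\rho,q_2)$: no word synchronizes $[q^*]$, and knowing $\rho=s_{n-L}\dots s_{n-1}$ never determines $\phi(s)_n$ without also knowing $\phi(s)_{n-L}$. Your partition by $(\rho,\phi(s)_{n-L})$ therefore never reduces to a partition by blocks of $s$, and since $P$-distribution only controls block frequencies in $s$, the aggregation step cannot be completed as you describe. (In this example the conditional-frequency claim is still true---it amounts to normality along the odd and even subsequences---but your argument does not establish it.)

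The paper avoids this by a different mechanism. Rather than resolving $\phi(s)_n$ from a finite window of $s$, it applies Theorem~\ref{lemaergodico} to conclude that for all $\epsilon,\epsilon'>0$ there is $k$ such that the set ${\sf B}_{q^*}^k(\epsilon)$ of words $\sigma\in\Sigma^k$ whose internal $(a',q')$-frequency lies within $\epsilon$ of $k_{a',q'}$ \emph{for every starting state $q\in[q^*]$ simultaneously} has $P$-measure exceeding $1-\epsilon'$. One then chops $T^{N_1}s$ into consecutive length-$k$ blocks; $P$-distribution of $s$ bounds the proportion of blocks outside ${\sf B}_{q^*}^k(\epsilon)$, and because membership in ${\sf B}_{q^*}^k(\epsilon)$ is insensitive to the entering state, the unknown value of $\phi(s)$ at block boundaries becomes irrelevant. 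This ``uniformly good block'' argument, not a synchronization or finite-memory reduction, is the missing ingredient.
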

\begin{proof}Notice that, since $s\in X$, $M$ on input $s\uh{N}$, will never reach the garbage state $\tilde{q}$, since $s\in X_L$ and $L$ is a factorial language. Moreover, we will show that there is some $N_0$ such that, for all $N>N_0$, $M$ stays within some ergodic class $[q^*]$ after reading the first $N$ symbols of $s$, i.e., $[\delta^*(s\uh{N},q_0)]=[q^*]$ for all $N>N_0$.

Observe that if $M>N$, then $[\delta^*(s\uh{M},q_0)]=[\delta^*(s\uh{N},q_0)]$ or $[\delta^*(s\uh{M},q_0)]>[\delta^*(s\uh{N},q_0)]$, and since there are only finitely many classes in $Q_f/\!\!\leftrightarrow$ this means there is some $N_0$ such that, for all $N>N_0$, $M$ stays in the same class, that is,
\begin{equation}\label{lastclass}
[\delta^*(s\uh{N_0},q_0)]=[\delta^*(s\uh{N},q_0)].
\end{equation}
We will call this class $[q^*]$ and claim it is ergodic.

Indeed, by Lemma~\ref{lemma3} we could choose some word $\sigma\in L(X)$ such that, after reading it from any state in $[q^*]$, $M$ either reaches the garbage state $\tilde{q}$ or reaches a state within an ergodic class. Since $s\in L(X)$ and $P$ is $L$-supported, we have $P(\tau)>0$, and since $L$ is $P$-distributed, $\sigma$ occurs (infinitely many times) in $s$. Since $\tilde{q}$ cannot be reached when a subword of $s$ is read as input, it follows that an ergodic class $[q']$ is reached at some $N>N_0$. By equation~\eqref{lastclass} it follows that $[q^*]=[q']$ and $[q^*]$ is ergodic.

Fix some $(a',q')\in{\sf A}_{q^*}$. Let us now apply Theorem~\ref{lemaergodico} to $[q^*]$. We have
\begin{equation}
\widehat{P}_{a,q}\left(z: \lim_N\sum_{i=1}^N\frac{\mathcal{X}_{(a',q')}(z_i)}{N}= k_{a',q'}\right)=1\nonumber
\end{equation}
for all $(a,q)\in{\sf A}_{q^*}$, which in turn implies that for all $\epsilon,\epsilon'>0$ there is $k_0$ such that for all $k\geq k_0$ and $(a,q)\in{\sf A}_{q^*}$,
\begin{equation}\label{limergodico}
\widehat{P}_{a,q}\left((a_1,q_1)\dots(a_l,q_k)\in(\Sigma\times Q)^k:\ \left|\sum_{i=1}^k\frac{\mathcal{X}_{(a',q')}((a_i,q_i))}{N}- k_{a',q'}\right|<\epsilon\right)>1-\epsilon'.
\end{equation}
Notice that~\eqref{limergodico} can be rewritten as
\begin{equation}\label{limergodico2}
P\left(a\sigma\in\Sigma^k:\ \left|\sum_{i=1}^k\frac{\mathcal{X}_{(a',q')}(\Phi_q^k(a\sigma)_i)}{N}- k_{a',q'}\right|<\epsilon\right)/P(a)>1-\epsilon'.
\end{equation}
Write
\begin{align*}
{\sf B}_{q^*}^k(\epsilon,a)&=\left\{a\sigma\in\Sigma^k:(\forall q\in[q^*], (a,q)\in{\sf A}_{q^*}) \left|\sum_{n=1}^k\frac{\mathcal{X}_{(a',q')}(\Phi_q^k(a\sigma)_n)}{N}- k_{a',q'}\right|<\epsilon\right\}
\\
{\sf B}_{q^*}^k(\epsilon)&=\bigcup_{a\in\Sigma}{\sf B}_{q^*}^k(\epsilon,a)
\end{align*}
These are just the words of length $k$ for which the average occurrences of all pairs $(a,q),\in {\sf A}_{q^*}$ are within an $\epsilon$ distance of their limit. Then it is a standard exercise in probability theory to see that~\eqref{limergodico2} and the finiteness of ${\sf A}_{q^*}$ and $[q^*]$ imply that for all $a$ $(a,q)\in{\sf A}_{q^*}$ for some $q$, and for all $\epsilon,\epsilon'>0$ there $k_0$ such that for all $k\geq k_0$ we have
\begin{equation}\label{cota0}
P\left(\bigcup \{ [\sigma] \colon \sigma\in{\sf B}_{q^*}^k(\epsilon,a) \}\right)/P(a)>1-\epsilon'.
\end{equation}

Let us see that this last inequality holds for all $a$, that is, for all $a$ there is some $q$ satisfying $(a,q)\in{\sf A}_{q^*}$. Indeed, the irreducibility of the transition matrix of the $L$-supported measure $P$ implies that for all $a\in\Sigma$ and all $\sigma\in\Sigma^*$ there is some $\rho\in\Sigma^*$ such that $\sigma\rho a\in L$. Take any $\sigma$ such that $\delta^*(\sigma,q_0)\in[q^*]$ and then take some $\rho$ such that $\sigma\rho a\in L$. This implies $\delta^*(\sigma\rho a,q_0)\notin[\tilde{q}]$ and since $[q^*]$ is an ergodic class this also implies that $\delta^*(\sigma\rho,q_0)\in[q^*]$. So $(a,\delta^*(\sigma\rho,q_0))\in{\sf A}_{q^*}$.

Then from~\eqref{cota0} we derive that for all $\epsilon,\epsilon'>0$ there is $k_0$ such that for all $k\geq k_0$ we have
\begin{equation}\label{cota1}
 P\left(\bigcup \{ [\sigma] \colon \sigma\in{\sf B}_{q^*}^k(\epsilon)\}\right)>1-\epsilon'.
\end{equation}
Now, remember $\langle s:k\rangle$ is the sequence $s$ read as a sequence in $(\Sigma^k)^\NN$, then the $P$-distribution of $s$ implies that for all $\epsilon''>0$ and $k\in\NN$ there is $M_0$ such that for all $\sigma\in\Sigma^k$ and $M\geq \max(M_0,2/\epsilon'')$, we have
\begin{equation}\label{cota2}
 \left|\frac{\occ(\sigma,\langle s:k\rangle\uh{M})}{M}-P(\sigma)\right|<\frac{\epsilon''}{2|\Sigma|^k}
\end{equation}
Take $N_1$ such that $\delta^*(s\uh{N_0},q_0)=q^*$ for $q^*$ some representative of $[q^*]$. Since finitely many summands do not alter the convergence of Cesaro limits, we may substitute $s'=T^{N_1}s$ for $s$ and rewrite~\eqref{limnorm} as
\begin{equation*}
\lim_N\sum_{n=1}^N\frac{\mathcal{X}_{(a',q')}(Z^{q^*}_n(s'))}{N}= k_{a',q'}.
\end{equation*}
Given $\epsilon,\epsilon',\epsilon''>0$ such that $\epsilon+\epsilon'+\epsilon''<\eta$ for some $\eta>0$, take some $k$ satisfying~\eqref{cota1} and take some $M_0$ satisfying~\eqref{cota2} for this $k$.  For any $M\geq \max(M_0,2/\epsilon'')$ we consider $N=kM$ and notice that
$$
\left|\sum_{n=1}^N\frac{\mathcal{X}_{(a',q')}(Z^{q^*}_n(s'))}{N}- k_{a',q'}\right|=\left|\sum_{j=0}^{M-1}M^{-1}\sum_{i=1}^k\frac{\mathcal{X}_{(a',q')}(\Phi^k_{Y_{jk+1}}(\langle s':k\rangle_j)_i)}{k}- k_{a',q'}\right|,
$$
where all $Y_{jk+1}$ are guaranteed to be in $[q^*]$ by our substitution of $s$. This implies (using first~\eqref{cota1} and then~\eqref{cota2}) that
\begin{align*}
\left|\sum_{n=1}^N\frac{\mathcal{X}_{(a',q')}(Z^{q^*}_n(s'))}{N}- k_{a',q'}\right|&<\epsilon\sum_{\sigma\in{\sf B}_{q^*}^k(\epsilon)}\frac{\occ(\sigma,\langle s':k\rangle\uh{M})}{M}+2\sum_{\substack{\sigma\in\Sigma^k\\ \sigma\notin {\sf B}_{q^*}^k(\epsilon)}}\frac{\occ(\sigma,\langle s':k\rangle\uh{M})}{M}\nonumber\\
&<\epsilon+2\Big|\sum_{\substack{\sigma\in\Sigma^k\\ \sigma\notin {\sf B}_{q^*}^k(\epsilon)}} P(\sigma)\Big|+2\sum_{\substack{\sigma\in\Sigma^k\\ \sigma\notin {\sf B}_{q^*}^k(\epsilon)}}\left|\frac{\occ(\sigma,\langle s':k\rangle\uh{M})}{M}-P(\sigma)\right|\nonumber\\
&<\epsilon+1-P\Big(\bigcup_{\sigma\in{\sf B}_{q^*}^k(\epsilon)}[\sigma]\Big)+2|\Sigma|^k\frac{\epsilon''}{2|\Sigma|^k}<\epsilon+\epsilon'+\epsilon''<\eta.
\end{align*}
For $N=kM+l$ (where $1\leq l<k$) we have
\begin{align*}
\left|\sum_{n=1}^N\frac{\mathcal{X}_{(a',q')}(Z^{q^*}_n(s'))}{N}- k_{a',q'}\right|&\leq\left|\sum_{n=1}^{Mk}\frac{\mathcal{X}_{(a',q')}(Z^{q^*}_n(s'))}{Mk}- k_{a',q'}\right|+\frac{l}{N}+|kM/N-1|\nonumber\\
&<\eta+\frac{2}{M}<\eta+\epsilon''
\end{align*}
This completes the proof.
\end{proof}


\begin{proof}[Proof of Theorem~\ref{thm:P-dist-implies-no-success}] Since our martingale is generated by a DFA with betting factors function $b$, states set $Q$ and transition function $\delta$, we know it satisfies
\begin{equation}\label{martdfa}f(\sigma)=f(\emptyset)\prod_{\substack{a\in\Sigma\\q\in Q}} b(a,q)^{\occ\left((a,q),\Phi_{q_0}^{|\sigma|}(\sigma)\right)}.
\end{equation}

By the same reasoning used in the first part of the proof of Lemma~\ref{lemalim} we know there is some $N_1$ such that $\delta^*(s\uh{N},q_0)\in [q^*]$ for all $N\geq N_1$ and some ergodic class $[q^*]$, From~\eqref{martdfa} it is clear that success of $f$ on $s$ is equivalent to success of $f'$ on $T^{N_1}(s)$, where $f'$ is the martingale that is generated by a DFA having $Q'=[q^*]\cap[\tilde{q}]$ as its set of states, $q_1=\delta^*(s\uh{N_1},q_0)$ as its initial state (notice $[q_1]=[q^*]$), and the restriction of $\delta$ and $b$ to $Q'$ as its transition and betting factors functions, respectively. That is, we restrict our analysis to the case in which $M$ starts and stays within a single ergodic class.

Since $s'=T^{N_1}(s)\in X_L$ we know $s'$ will only visit tuples $(a,q)\in {\sf A_{q_1}}$, that is, for all $N$, $\Phi_{q_1}^{N}(s'\uh{N})$ is in ${\sf A}_{q_1}^*$. Then from Lemma~\ref{lemalim} and~\eqref{martdfa} we have
$$
\lim_N\frac{(f'(s'\uh{N}))^{1/N}}{\prod_{(a,q)\in {\sf A_{q_1}}}b(a,q)^{ k_{a,q}}}=1.
$$
If $r=\prod_{(a,q)\in {\sf A_{q_1}}}b(a,q)^{ k_{a,q}}<1$, then
$f'(s'\uh{N})<(r+\epsilon')^N$ for $r+\epsilon'<1$ for large enough $N$. Hence $\lim f'(s'\uh{N})=0$ and the martingale does not succeed on $s'$.

Let us now show that for fixed $q\in[q_1]$
$$
U_q=\prod_{\substack{a\in\Sigma\\(a,q)\in {\sf A_{q_1}}}}b(a,q)^{ k_{a,q}}\leq 1.
$$
Indeed, by Observation~\ref{obsetransit}, Theorem~\ref{lemaergodico} and convexity of the logarithm we get
\begin{align}\label{cuentaconvex}
\log U_q&=\sum_{\substack{a\in\Sigma\\(a,q)\in {\sf A_{q_1}}}} k_{a,q}\log b(a,q)\nonumber\\
&=\sum_{(a',q')\in{\sf A_{q_1}}} k_{a',q'}\sum_{\substack{a\in\Sigma\\(a,q)\in {\sf A_{q_1}}}}\widehat{P}^{q_1}_{(a',q'),(a,q)}\log b(a,q)\nonumber\\
&\leq\sum_{(a',q')\in{\sf A_{q_1}}} k_{a',q'}\log \left(\sum_{\substack{a\in\Sigma\\(a,q)\in {\sf A_{q_1}}}}\widehat{P}^{q_1}_{(a',q'),(a,q)}b(a,q)\right)\nonumber\\
&=\sum_{\substack{(a',q')\in{\sf A_{q_1}}\\\delta(a',q')=q}} k_{a',q'}\log \left(\sum_{a\in\Sigma}P(a'a\mid a')b(a,q)\right)\nonumber\\
&=\sum_{\substack{(a',q')\in{\sf A_{q_1}}\\\delta(a',q')=q}} k_{a',q'}\log \left(1\right)=0,
\end{align}
where the last line follows from the fact that $b(a,q)$ are the betting factors of a $P$-martingale.

It follows that $r=\prod_q U_q\leq 1$ and equality is achieved if and only if $U_q=1$ for all $q\in[q_1]$. But, by strict convexity in~\eqref{cuentaconvex}, $\log U_q=0$ if and only if $b(a,q)$ is the same for all~$a$. Then $r=\prod_q U_q= 1$ if and only if the betting factors are constant at each state, implying $f'(\sigma a)=f'(\sigma)$ for all words $\sigma$ and all $a$. Clearly, a constant martingale cannot succeed on any sequence, and the result follows.
\end{proof}

\section{$\beta$-expansions and Pisot numbers}\label{sec:beta-exp-and-pisot}
In this section we introduce some definitions and known results on the representation of reals in non-integer bases, and Pisot numbers. All these material will be needed for our result of~\S\ref{sec:polyrndness}.

\subsection{$\beta$-expansions}\label{sec:betaexp}
Let us now introduce a way of representing real numbers in a non-integer base $\beta$. Most of the presentation and the definitions are taken from~\cite{Bertrand86}. Let $\lfloor x\rfloor$ and $\lceil x\rceil$ be the floor and ceiling of $x$, respectively, and let $\{x\}$ denote the integer and fractional part.

Let $\beta$ be a real number greater than 1. Any real number $x$ has a unique $\beta$-expansion $s^\beta_{0},s^\beta_{1},\dots$ such that
\begin{equation}\label{betasum}
x=s^\beta_{0}+\sum_{n>0}\frac{s^\beta_{n}}{\beta^n},
\end{equation}
where $s^\beta_{n}$ are nonnegative integers, $0\leq s^\beta_{n}<\beta$ for $n>0$ and any of the following equivalent conditions are met:
\begin{enumerate}
\item $\forall n\geq 0 \sum_{i>n}(s^\beta_{i}/\beta^i)<1/\beta^n$
\item $s^\beta_{n}$ is defined inductively in the following way:
\begin{center}
\begin{tabular}{rclrcl}
$s^\beta_{0}$   & $=$ & $\lfloor x\rfloor,$         & $r_0$     & $=$ & $\{x\}$\\
$s^\beta_{n+1}$ & $=$ & $\lfloor\beta r_n\rfloor,$ & $r_{n+1}$ & $=$ & $\{\beta r_n\}$
\end{tabular}
\end{center}
\end{enumerate}
This expansion coincides with the usual definition given for $\beta$ an integer base.
Notice that the $\beta$-expansion of the real number $\beta$ need not be eventually periodic, in particular, it need not be finite, that is, eventually 0 (of course it is when $\beta$ is an integer).
It is easy to check from the definition of a $\beta$-expansion that if $\beta$ had a finite expansion then it satisfies $\beta=a_0+\frac{a_1}{\beta}+\dots+\frac{a_s}{\beta^s}$ and the periodic sequence $a_0 a_1\dots (a_s-1)a_0 a_1\dots (a_s-1)\dots$ would also satisfy~\eqref{betasum} (but not the following two equivalent conditions, since the $\beta$-expansion is unique).

We will refer to such a periodic sequence as the \textit{periodic} $\beta$-expansion of $\beta$ and we we will write it $\hat{\beta}$ (notice that the periodic $\beta$-expansion may only apply to the base $\beta$).
We will also write $\mathfrak{s}(\beta)$ for the $\beta$-expansion of $\beta$ in case it is not terminated by an infinite sequence of 0's and $\mathfrak{s}(\beta)=\hat{\beta}$ otherwise.

\begin{example}The periodic 2-expansion of 2 is $1^\infty$, whereas its 2-expansion is $20^\infty$.
Let $\phi$ be the golden number satisfying $\phi^2=\phi+1$. The periodic $\phi$-expansion of $\phi$ is $101010\dots$, whereas its $\phi$-expansion is $110^\infty$.
\end{example}

Given a base $\beta$, let  $\Sigma_{\beta}=\{0\dots \lceil\beta\rceil-1]\}$ and let $p_\beta\colon [0,1)\rightarrow \Sigma_\beta^\NN$ be the one-to-one mapping that sends each $x\in [0,1)$ to the fractional part of its $\beta$-expansion
$$
s^\beta_{1}s^\beta_{2}\dots s^\beta_{n}\dots
$$
%
Notice that $p_\beta(1)=\mathfrak{s}(\beta)$ (strictly speaking, $p_\beta$ is defined on $[0,1)$ but it is trivially extended to $[0,1]$ by continuity).

If $\Sigma$ is a finite set of digits, as in the definition of the mapping $p_\beta$, then the natural ordering of those digits induces a lexicographic order $\leqlex$ on the full shift.

\begin{theorem}\cite[p.\ 273]{Bertrand86}\label{betacaract}
If $\beta>1$ is a real base then the image $p_\beta([0,1))$ is the set
$$
\{s\in \Sigma_\beta^\NN \colon (\forall n)\ T^ns\ltlex \mathfrak{s}(\beta)\}.
$$
\end{theorem}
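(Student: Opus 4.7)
The plan is to prove the two set inclusions separately. Throughout, write $d:=\mathfrak{s}(\beta)$, and recall $\sum_{i>0} d_i/\beta^i = 1$ since $d$ is (the fractional part of) the expansion of $1$.

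For the inclusion $p_\beta([0,1))\subseteq\{s : T^n s\ltlex d \text{ for all } n\}$, I would first observe that the inductive definition of the $\beta$-expansion gives $T^n p_\beta(x) = p_\beta(r_n)$, where $r_n\in[0,1)$ is the $n$-th remainder produced by the greedy algorithm. Thus the claim reduces to showing $p_\beta(y)\ltlex d$ for every $y\in[0,1)$. Assume toward contradiction that $p_\beta(y)\geqlex d$. If equality holds, summing yields $y=\sum_{i>0}d_i/\beta^i=1$, contradicting $y<1$. Otherwise, let $i$ be the first position where $p_\beta(y)_i>d_i$; then $p_\beta(y)_i\geq d_i+1$. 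Substituting into $y=\sum_j p_\beta(y)_j/\beta^j$ and comparing with $1=\sum_j d_j/\beta^j$ gives $y-1\geq 1/\beta^i-\sum_{j>i}d_j/\beta^j$, and the right-hand side is nonnegative because the tail of $d$ beyond position $i$ sums to at most $1/\beta^i$ (a consequence of the property $T^i d\leqlex d$ of $\mathfrak{s}(\beta)$). This forces $y\geq 1$, a contradiction.

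For the reverse inclusion, let $s$ satisfy $T^n s\ltlex d$ for all $n$, and set $y:=\sum_{i>0}s_i/\beta^i$. I want to show (a) $y\in[0,1)$ and (b) $s=p_\beta(y)$. Part (a) is the special case $n=0$ of the key lemma below. Part (b) amounts to verifying condition (1) from the definition, namely $\sum_{j>n} s_j/\beta^j<1/\beta^n$ for all $n$, after which the greedy algorithm applied to $y$ mechanically reproduces $s$ digit by digit. But the key lemma applied to each shift $T^n s$, which itself satisfies $T^n s\ltlex d$ by hypothesis, gives precisely $\sum_{j\geq 1}(T^n s)_j/\beta^j<1$, and rescaling by $\beta^{-n}$ yields condition (1).

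The technical heart is the following lemma: \emph{if $s$ is such that $T^m s\ltlex d$ for all $m\geq 0$, then $\sum_{i>0} s_i/\beta^i<1$.} The main obstacle is that lexicographic order is \emph{not} numerically monotone on arbitrary sequences in $\Sigma_\beta^\NN$ (for instance, when $\beta<2$, the sequence $0(\lceil\beta\rceil-1)^\infty$ has a larger numerical value than $(\lceil\beta\rceil-1)0^\infty$, though it is lex-smaller). The shift-invariant hypothesis is what rescues monotonicity. I would prove the lemma by induction on the first index $i$ at which $s$ disagrees with $d$: there $s_i\leq d_i-1$, contributing a deficit of $1/\beta^i$ to the sum, while the tail $\sum_{j>i}s_j/\beta^j$ is controlled by applying the inductive hypothesis to $T^i s$ (which also satisfies the shift-invariant lex condition) and by the tail bound on $d$ coming from $T^i d\leqlex d$. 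Some care is needed in the case $d=\hat{\beta}$, where the inequalities in condition (1) are no longer strict on $d$ itself, so that equality in intermediate bounds must be tracked and strictness recovered from $s\neq d$ somewhere.
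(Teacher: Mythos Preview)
The paper does not prove this theorem; it is quoted from \cite{Bertrand86} (the result goes back to Parry), so there is no in-paper argument to compare against. Your outline is essentially the standard proof and is correct.

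Two small points of presentation. In the forward inclusion you invoke the tail bound $\sum_{j>i}d_j/\beta^j\leq \beta^{-i}$ and justify it by ``$T^i d\leqlex d$''. Deriving a numeric tail bound from a lex inequality is exactly the non-strict form of your key lemma, so as written the forward direction leans on the backward direction's main tool. This is harmless, but it is cleaner to verify the tail bound on $d$ directly from its definition: it is condition (1) in case (a), and in case (b) a two-line geometric-series check using $\sum_{j=1}^{p}a_j\beta^{-j}=1$.

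Second, ``induction on the first index $i$'' is not quite the right description of the key lemma's proof, and it obscures how strictness is obtained. What you actually do is split $s$ at the successive positions $0=i_0<i_1<i_2<\dots$ where $T^{i_k}s$ first disagrees with $d$, bound each block by $\beta^{-i_k}-\beta^{-i_{k+1}}$ using $s_{i_{k+1}}\leq d_{m_k}-1$ and $\sum_{l\leq m_k}d_l/\beta^l\leq 1$, and telescope to get $\sum_j s_j/\beta^j\leq 1$. For strictness, note that $d=\mathfrak{s}(\beta)$ is never eventually $0^\infty$ (by construction in both cases), so $\sum_{l\leq m}d_l/\beta^l<1$ \emph{strictly} for every finite $m$; hence already the first block gives $\sum_{j\leq i_1}s_j/\beta^j<1-\beta^{-i_1}$, while the tail is at most $\beta^{-i_1}$ by the non-strict bound applied to $T^{i_1}s$. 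This handles the $d=\hat\beta$ case without the extra bookkeeping you flag.
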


Notice that the closure of the set above, that is,
$$
\{s\in \Sigma_\beta^\NN \colon (\forall n)\ T^ns\leqlex \mathfrak{s}(\beta)\}.
$$
is a subshift of $\Sigma_\beta^\NN$. In fact, there is a nice converse to the above theorem.

\begin{theorem}\cite[p. 274]{Bertrand86} Suppose that for some alphabet $\Sigma=\{0,\dots,k\}$ we have that $(X,T)$ is a subshift 
such that
$$
X=\{s\in \Sigma^\NN \colon (\forall n)\ T^ns\leqlex s^*\}
$$
for some $s^*$ in $\Sigma^\NN$ which satisfies
$
(\forall n)\ T^ns^*\leqlex s^*.
$
Then $X$ is the closure of $p_\beta([0,1))$  for some real base $\beta$.
\end{theorem}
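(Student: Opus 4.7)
The plan is to reduce to Theorem~\ref{betacaract} by producing a real $\beta>1$ with $\mathfrak{s}(\beta)=s^*$, and then matching the closure of $p_\beta([0,1))$ with $X$. The degenerate case $s^*=0^\infty$ (which is forced once $s^*_0=0$ by the self-similarity hypothesis) should be treated as an exceptional case or tacitly excluded from the statement; we may assume $a_0:=s^*_0\geq 1$.

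To construct $\beta$, write $s^*=a_0 a_1 a_2 \ldots$. The desired $\beta$ must satisfy $\beta=\sum_{n\geq 0}a_n\beta^{-n}$. For each $N$, I would let $\beta_N>1$ be the unique positive real with $\beta_N=\sum_{n=0}^{N}a_n\beta_N^{-n}$, i.e., the largest real root of $x^{N+1}-a_0x^N-\cdots-a_N=0$. Using the lexicographic self-similarity $T^n s^*\leqlex s^*$ to control tails, I would show that $(\beta_N)$ is bounded (e.g.\ by $a_0+1$, using that $a_n<\beta$ forces $\sum_{n\geq 1}a_n\beta^{-n}$ to be strictly less than $1$) and extract a limit $\beta\geq 1$; passing to the limit in the defining equation then yields $\beta=\sum_{n\geq 0}a_n\beta^{-n}$.

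Next, I would verify $\mathfrak{s}(\beta)=s^*$ by induction on $n$ using the inductive definition of the $\beta$-expansion. The base case $\lfloor\beta\rfloor=a_0$ is equivalent to $\sum_{n\geq 1}a_n\beta^{-n}<1$, and the inductive step $\lfloor\beta r_n\rfloor=a_{n+1}$ reduces to $\sum_{k\geq 1}a_{n+k}\beta^{-k}<1$. These arithmetic strict inequalities follow from the lexicographic condition $T^n s^*\leqlex s^*$, exploiting the order-preserving correspondence between lexicographic order on $\beta$-admissible sequences and real order on their evaluations. The sub-case where $\beta$ has a finite expansion is handled by the convention defining $\mathfrak{s}$: then $\mathfrak{s}(\beta)=\hat\beta$, which again matches $s^*$ by construction of the $\beta_N$.

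With $\mathfrak{s}(\beta)=s^*$ in hand, Theorem~\ref{betacaract} gives $p_\beta([0,1))=\{s:(\forall n)\,T^n s\ltlex s^*\}$, so $\overline{p_\beta([0,1))}\subseteq X$ is immediate from $X$ being closed. For the reverse inclusion, given $s\in X$, I would approximate $s$ by sequences obtained by truncating $s$ at length $N$ and appending a sufficiently small tail (say a suitable prefix of $s^*$ strictly decreased in its last nonzero position, or simply $0^\infty$ when $s^*$ is not eventually zero) so that at every shift the strict lexicographic inequality holds; such approximants lie in $p_\beta([0,1))$ and converge to $s$ in the product topology. The main obstacle is the construction of $\beta$ and the verification that $\mathfrak{s}(\beta)=s^*$ — essentially Parry's theorem — since translating the lexicographic self-similarity of $s^*$ into the exact arithmetic strict inequalities needed to identify each $\lfloor\beta r_n\rfloor=a_{n+1}$ requires careful bookkeeping, especially near the boundary between the generic and the $\hat\beta$ case.
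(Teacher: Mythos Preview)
The paper does not prove this theorem; it is quoted without proof as a result from \cite[p.~274]{Bertrand86} (it is essentially Parry's characterization of $\beta$-shifts). So there is no ``paper's own proof'' to compare against.

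Your sketch follows the classical Parry construction and is correct in outline. Two remarks on execution:

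\textbf{Construction of $\beta$.} Taking a limit of roots $\beta_N$ works, but it is cleaner to define $\beta$ directly as the unique solution in $(1,\infty)$ of $\sum_{n\geq 0}a_n x^{-n-1}=1$: the left-hand side is continuous, strictly decreasing in $x$, tends to $0$ at infinity, and exceeds $1$ near $x=1^+$ once $a_0\geq 1$ and $s^*$ is not of the form $a_0 0^\infty$.

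\textbf{The key inequality.} The sentence ``these strict inequalities follow from the lexicographic condition, exploiting the order-preserving correspondence between lexicographic order on $\beta$-admissible sequences and real order on their evaluations'' is the crux, and as written it is circular: the order-preserving correspondence is precisely what you are trying to establish. The honest argument proceeds directly: one shows (by induction on the first position where two sequences differ, using $T^m s^*\leqlex s^*$ at each step) that any sequence $(b_i)$ with $b_0b_1\ldots\leqlex s^*$ satisfies $\sum_{i\geq 0} b_i\beta^{-i-1}\leq 1$, with equality only for $s^*$ itself. This is Parry's lemma; once stated this way it is not circular and yields $\lfloor\beta r_n\rfloor=a_{n+1}$ as you want. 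Your closing paragraph acknowledges this difficulty, so you have identified the right obstacle.

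\textbf{An edge case.} You exclude $s^*=0^\infty$, but $s^*=a_0 0^\infty$ with $a_0\geq 1$ also satisfies $T^n s^*\leqlex s^*$ and yet the resulting $X$ is not the closure of $p_\beta([0,1))$ for any $\beta$ (the only candidate $\beta=a_0$ has alphabet $\Sigma_\beta=\{0,\ldots,a_0-1\}$, which excludes the symbol $a_0$). This degenerate case is tacitly excluded in Bertrand's formulation too.
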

This allows us to define the following:
\begin{definition}
Given some real number $\beta>1$, the {\em $\beta$-shift} is the subshift $(X_\beta,T)$, 
where
$$
X_{\beta}=\{s\in \Sigma_{\beta}^\NN \colon (\forall n\in\NN)\ T^ns\leqlex \mathfrak{s}(\beta)\}.
$$
\end{definition}

\begin{example}\label{ex:phi-shift}
The $2$-shift is the full shift $\{0,1\}^\NN$ (that is, the Cantor set with the shift operator).
The $\phi$-shift 
is the set of infinite sequences on $\{0,1\}$ such that no two 1's occur consecutively in them. In fact, this shift is Markov and it is the Markov shift which had the sofic shift of Example~\ref{examplesofic} as a factor.
\end{example}


%
\begin{theorem}\cite[Theorem 1]{Bertrand86} Let $\beta>1$ be a real base. Then the $\beta$-shift is a Markov shift if and only if the $\beta$-expansion of $\beta$ is finite, and it is sofic if and only if the $\beta$-expansion of $\beta$ is periodic.
\end{theorem}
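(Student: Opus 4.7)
I would prove the theorem by constructing the classical Parry labelled graph presentation of $X_\beta$ and reading off both equivalences from its combinatorial structure. Write $\mathfrak{s}(\beta) = d_0 d_1 d_2 \dots$ and, for each $n \geq 0$, set $u^{(n)} = T^n \mathfrak{s}(\beta)$. Define a labelled directed graph $G_\beta$ with vertex set $\NN$ as follows: from each vertex $n$, include a ``spine'' edge labelled $d_n$ to vertex $n+1$, and for every digit $c$ with $0 \leq c < d_n$, a ``reset'' edge labelled $c$ back to vertex $0$. Taking vertex $0$ as the initial state, a direct induction shows that the set of sequences arising as labels of infinite walks from vertex $0$ is exactly $X_\beta$: at each step the current vertex $n$ records the longest suffix of what has been read that matches a prefix of $\mathfrak{s}(\beta)$, and the spine/reset rule exactly encodes the admissibility condition $T^i s \leqlex \mathfrak{s}(\beta)$.

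For the sofic equivalence, $G_\beta$ collapses to a finite graph iff the orbit $\{u^{(n)} : n \geq 0\}$ is finite iff $\mathfrak{s}(\beta)$ is eventually periodic iff the $\beta$-expansion of $\beta$ is eventually periodic (including the finite case, where $\mathfrak{s}(\beta) = \hat\beta$). This gives the ``if'' direction at once. For the converse, assume $X_\beta$ is sofic, so $L(X_\beta)$ is regular, and apply Myhill--Nerode to the prefixes $w_n = d_0 \dots d_{n-1}$ of $\mathfrak{s}(\beta)$. The lexicographically maximal infinite completion of the follower set $F(w_n) = \{v : w_n v \in L(X_\beta)\}$ is precisely $u^{(n)}$: indeed $w_n u^{(n)} = \mathfrak{s}(\beta) \in X_\beta$, while any $v$ strictly lex-greater than $u^{(n)}$ would make $w_n v$ strictly lex-greater than $\mathfrak{s}(\beta)$, violating the defining condition. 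Distinct $u^{(n)}$ therefore give distinct follower sets, so finitely many follower sets force $\mathfrak{s}(\beta)$ to be eventually periodic.

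For the Markov equivalence, the ``if'' direction is direct: if the $\beta$-expansion of $\beta$ is finite, say $\beta = a_0 + a_1/\beta + \dots + a_s/\beta^s$ with $a_s \neq 0$, then $\mathfrak{s}(\beta) = \hat\beta$ is purely periodic with period $p = s+1$, and I would verify that $X_\beta$ coincides with the $p$-step SFT forbidding exactly the length-$p$ words lex-greater than $d_0 \dots d_{p-1}$. One inclusion is immediate from the definition of $X_\beta$; the reverse follows by iterating the lex comparison one period at a time, using pure periodicity of $\mathfrak{s}(\beta)$.

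The converse (SFT $\Rightarrow$ finite expansion) is the main obstacle. The sofic argument already yields $\mathfrak{s}(\beta) = uv^\infty$ with $|u|$ minimal, and I need to show $|u| = 0$. My plan is to exploit the asymmetry in $G_\beta$: spine transitions are unique, while the reset edges all land at vertex $0$, so the tail vertices $u^{(0)}, \dots, u^{(|u|-1)}$ can only be reached by walking the spine from the initial state, whereas the cycle vertices admit infinitely many access paths. Assuming $|u| \geq 1$, for each candidate window size $k$ one constructs, by pumping the period $v$, a pair of admissible prefixes with identical length-$k$ suffix but terminating at distinct vertices of $G_\beta$---one in the tail and one in the cycle---thereby producing distinct follower sets and contradicting the bounded-memory condition of a $k$-step SFT. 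A cleaner algebraic finishing move, if one preferred, is to note that a purely periodic infinite greedy expansion would force $\beta r_{p-1} = r_0 + d_0 = \beta$ through the recurrence $r_n = \{\beta r_{n-1}\}$, giving the impossible $r_{p-1} = 1$.
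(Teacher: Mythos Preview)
The paper does not prove this theorem at all; it is simply quoted from Bertrand~\cite{Bertrand86} as background. So there is no ``paper's proof'' to compare against, and your write-up is being judged on its own merits.

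Your construction of the Parry countable graph $G_\beta$ and the sofic equivalence via Myhill--Nerode on the prefixes $w_n$ of $\mathfrak{s}(\beta)$ are correct and standard; the observation that the lex-maximal infinite extension of the follower set $F(w_n)$ is $u^{(n)}=T^n\mathfrak{s}(\beta)$ is exactly the right invariant. The direction ``finite expansion $\Rightarrow$ SFT'' is also fine, and the algebraic finishing move (that a genuinely greedy purely periodic expansion of $\beta$ forces $r_{p-1}=1$) correctly bridges ``$\mathfrak{s}(\beta)$ purely periodic'' to ``greedy expansion finite''.

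The gap is in the converse ``SFT $\Rightarrow$ $|u|=0$''. Your plan is to produce, for each window size $k$, two admissible words with the same length-$k$ suffix but terminating at a tail vertex and a cycle vertex of $G_\beta$ respectively. You do not actually construct such a pair, and the construction is not automatic: any word landing at tail vertex $j<m$ must end with the specific block $d_0\dots d_{j-1}$ preceded by a reset, while a word landing in the cycle with that same $k$-suffix would require the block $d_0\dots d_{j-1}$ to also occur as a suffix of some spine segment entirely inside the cycle---which need not happen. Conversely, trying to match the $k$-suffix of a long spine word $w_N$ with a shorter one typically forces both to land at the \emph{same} cycle vertex by periodicity, giving equal follower sets. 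So ``pumping $v$'' by itself does not obviously produce the mismatch you need. The standard arguments (Blanchard, Ito--Takahashi, or Bertrand) proceed differently---for instance by exploiting that in a $k$-step SFT the lex-maximal sequence has $d_n$ determined by $d_{n-k+1}\dots d_{n-1}$, and then arguing directly that the resulting automatic sequence must be purely periodic---and you should either fill in an explicit pair of words or switch to one of those routes.
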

A result similar to Theorem~\ref{teoparry2} for $\beta$-shifts was also proved by Parry in~\cite{Parry60}:
\begin{theorem}\label{teoparry}Given a real base $\beta>1$, there is a unique probability measure $\widehat{P}_{\beta}$ on $[0,1)$ such that $P_{\beta}=\tilde{P}_{\beta}\circ p_\beta^{-1}$ is an invariant measure for the $\beta$-shift of maximal metric entropy on $X_{\beta}$. Moreover, $\widehat{P}_{\beta}$ has the closed expression%
\begin{equation}\label{eqn:teoparry}
\widehat{P}_\beta([a,b])=\int_a^b\sum_{n=1}^\infty\mathbbm{1}_{[0,T_\beta^n(1))}(x)\frac{1}{\beta^n}dx,
\end{equation}
and if $(X_{\beta},T)$ is a Markov shift with a grammar of wordlength $k-1$, then $P_{\beta}$, called the {\em Parry measure}, is a $k$-step Markov measure.
\end{theorem}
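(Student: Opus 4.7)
The plan is to establish the three assertions of the theorem in the natural order: invariance of the explicit measure, maximality/uniqueness of entropy, and the $k$-step Markov property when the $\beta$-shift is Markov. The proof essentially follows Parry's original argument in~\cite{Parry60}; I outline how the pieces fit together.

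First, I would verify directly that the density $h_\beta(x) = \sum_{n \geq 1} \mathbbm{1}_{[0, T_\beta^n(1))}(x)\, \beta^{-n}$ (up to a finite normalization, since $h_\beta \leq \beta/(\beta-1)$) gives a $T_\beta$-invariant probability measure on $[0,1)$. The key calculation is the Perron--Frobenius transfer identity
\begin{equation*}
h_\beta(y) \;=\; \frac{1}{\beta}\sum_{x \in T_\beta^{-1}(y)} h_\beta(x),
\end{equation*}
which, using $T_\beta^{n+1}(1) = \beta T_\beta^n(1) - \lfloor \beta T_\beta^n(1)\rfloor$, collapses into a telescoping sum. Pushing this measure forward via $p_\beta$ produces an invariant measure $P_\beta$ on $(X_\beta, T)$, since $p_\beta$ is a measure-theoretic conjugacy of $T_\beta$ with the shift on $X_\beta$.

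Second, I would show $P_\beta$ has maximal metric entropy equal to $\log \beta$ and is the unique such measure. The topological entropy of $(X_\beta, T)$ is $\log \beta$ (by a direct count of admissible words, using Theorem~\ref{betacaract}). Since $T_\beta$ has constant Jacobian $\beta$ $\widehat{P}_\beta$-a.e., the Rokhlin formula yields $h_{P_\beta}(T) = \log \beta$, so $P_\beta$ attains the supremum in the variational principle. Uniqueness then follows from the fact that $\log \beta$ is the topological pressure of the zero potential and that the $\beta$-shift has enough specification to admit a unique equilibrium state; concretely, any other invariant probability measure of entropy $\log \beta$ would have to agree with $P_\beta$ on every cylinder by a Shannon--McMillan--Breiman argument combined with the exponential count of admissible words.

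Finally, when $\mathfrak{s}(\beta)$ is finite the $\beta$-shift is an irreducible $(k-1)$-step Markov shift (irreducibility holds because the language of $X_\beta$ is factorial and prolongable, and any admissible prefix can be extended with a long enough block of zeros to reach any other admissible prefix). Theorem~\ref{teoparry2} then applies verbatim: the unique invariant measure of maximal entropy on such a Markov shift is $k$-step Markov, hence so is $P_\beta$. The main obstacle is the uniqueness claim in the non-Markov case, since Theorem~\ref{teoparry2} is unavailable there; it requires the full variational/specification machinery on the non-sofic $\beta$-shift rather than the clean Perron--Frobenius argument available in the Markov case.
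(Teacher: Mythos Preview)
The paper does not prove this theorem at all: it is quoted from Parry~\cite{Parry60} and used as a black box, with no argument supplied. There is therefore nothing in the paper to compare your proposal against.

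That said, as a freestanding sketch of Parry's argument your outline is broadly sound for the invariance and the Markov claims, but the uniqueness paragraph is where the real difficulty lies and your treatment of it is too optimistic. Not every $\beta$-shift has specification (indeed, specification fails for a dense $G_\delta$ of $\beta$'s), so invoking ``enough specification to admit a unique equilibrium state'' is not legitimate in general. The intrinsic ergodicity of $\beta$-shifts is a genuine theorem (Hofbauer, building on Parry) and needs either the tower/Markov-extension machinery or a direct combinatorial argument on the structure of admissible words; a one-line appeal to specification or to Shannon--McMillan--Breiman does not suffice. You correctly flag this as the main obstacle, but the sentence preceding that flag asserts the conclusion via a mechanism that is not available.
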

Notice that the above expression implies that there are positive $k$ and $k'$ such that
\begin{equation}\label{eqn:bound_hatP}
k'\lambda(A)\leq \widehat{P}_\beta(A)\leq k \lambda(A)
\end{equation}
for any Borel subset $A$ of $[0,1)$, and $\lambda$ the Lebesgue measure.

\subsection{Pisot numbers} \label{sec:pisot}
While constructive considerations make us think of rational numbers as the closest relatives of integers, the analysis of real base expansions forces us to consider the ``dynamic" properties of real numbers, and from a dynamical viewpoint non-integer rational numbers are quite distinct from integers. The following definition will introduce us to the closest analog of an integer from a dynamic point of view.

\begin{definition}A real number $\beta$ is a {\em Pisot number} if $\beta>1$ and $\beta$ is the root of a monic polynomial in integer coefficients, such that all its conjugate values (that is, all the other roots of its minimal polynomial) have absolute values strictly less than 1.
\end{definition}
This purely algebraic is interesting for our purposes because of the next remarkable property. Let $\|x\|$ denote the distance from $x$ to its closest integer
\begin{theorem}\cite[Lemma 1]{Bertrand86} A real number $\beta>1$ is a Pisot number if and only if
$
\sum_{n\geq0} \|\beta^n\| 
$ converges.
\end{theorem}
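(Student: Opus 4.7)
The statement is an equivalence, and I would treat the two implications separately.

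For the direction ``$\beta$ Pisot $\Rightarrow \sum\|\beta^n\|<\infty$'', I would exploit the Galois conjugates directly. Let $\beta_2,\dots,\beta_d$ be the conjugates of $\beta$ other than $\beta$ itself, each of absolute value strictly less than~$1$. The power sum $T_n:=\beta^n+\sum_{i=2}^d\beta_i^n$ is fixed by the Galois group of the minimal polynomial of $\beta$ and is a sum of powers of algebraic integers, hence a rational integer. Setting $r:=\max_{i\geq 2}|\beta_i|<1$, one obtains
\[ |\beta^n - T_n| \;=\; \Bigl|\sum_{i=2}^d \beta_i^n\Bigr| \;\leq\; (d-1)r^n, \]
which for $n$ large is smaller than $1/2$; hence $T_n$ is the nearest integer to $\beta^n$ and $\|\beta^n\|\leq (d-1)r^n$, and summing a geometric series yields convergence.

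For the converse ``$\sum\|\beta^n\|<\infty \Rightarrow \beta$ Pisot'', I would follow Pisot's classical generating-function argument. Let $a_n$ be the nearest integer to $\beta^n$ and set $\epsilon_n:=\beta^n-a_n$, so that $\sum_n|\epsilon_n|<\infty$ by hypothesis. Then $F(z):=\sum_n\epsilon_n z^n$ converges absolutely on $|z|\leq 1$, and on the disc $|z|<1/\beta$ one has
\[ G(z) \;:=\; \sum_{n\geq 0} a_n z^n \;=\; \frac{1}{1-\beta z} - F(z), \]
which meromorphically extends $G$ to the whole open disc $|z|<1$ with a single simple pole at $z=1/\beta$. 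Since the coefficients $a_n$ are integers and $G$ is meromorphic in the unit disc, Kronecker's theorem on integer power series forces $G$ to be a rational function $P(z)/Q(z)$ with $P,Q\in\ZZ[z]$. Consequently $(a_n)$ satisfies an integer linear recurrence whose characteristic polynomial has $\beta$ as a root, so $\beta$ is an algebraic integer.

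To conclude, I would decompose $a_n = \beta^n + \sum_i B_i\lambda_i^n$, where the $\lambda_i$ are the remaining roots of the characteristic polynomial. The coefficient of $\beta^n$ must be $1$: indeed $\beta$ is the dominant root since $1/\beta$ is the pole of $G$ closest to the origin, and $\epsilon_n\to 0$ pins the leading coefficient. Hence $\epsilon_n = -\sum_i B_i\lambda_i^n$, and absolute summability of $(\epsilon_n)$ rules out any $\lambda_i$ with $|\lambda_i|\geq 1$ and $B_i\neq 0$: for $|\lambda_i|>1$ the corresponding term is unbounded, while for $|\lambda_i|=1$ the contribution $\sum_{|\lambda_j|=1} B_j\lambda_j^n$ is almost periodic with quadratic average $\sum|B_j|^2>0$ (by a standard orthogonality argument on the unit circle), so it fails to be summable. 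Every conjugate of $\beta$ therefore lies strictly inside the unit disc, and $\beta$ is Pisot. I expect the main obstacle to be precisely this final step of eliminating conjugates on the unit circle, which is what separates Pisot numbers from Salem numbers and uses the full $\ell^1$ strength of the hypothesis, not merely $\|\beta^n\|\to 0$.
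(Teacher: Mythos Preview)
The paper does not prove this result; it simply quotes it from~\cite{Bertrand86}. So there is no in-paper proof to compare against, and I evaluate your plan on its own merits.

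Your forward direction is the standard trace argument and is complete.

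The converse follows Pisot's classical generating-function strategy, which is the right one, but the pivotal step is under-justified. You write that ``Kronecker's theorem on integer power series forces $G$ to be a rational function,'' yet no theorem of that name delivers this from the hypotheses at hand. Integer Taylor coefficients together with meromorphy in the open unit disc do not force rationality: witness $\tfrac{1}{1-2z}+\sum_{k\ge 0}z^{2^k}$, which has integer coefficients and a single simple pole in $|z|<1$ but is irrational because the lacunary part has the unit circle as a natural boundary. Kronecker's theorem merely characterises rationality via the eventual vanishing of the Hankel determinants $\Delta_N=\det(a_{i+j})_{0\le i,j\le N}$; Pisot's genuine contribution is to \emph{prove} that vanishing by exploiting $\Delta_N\in\ZZ$ and establishing $|\Delta_N|<1$ for large $N$. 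That determinant estimate is nontrivial and is precisely where the summability hypothesis on $(\epsilon_n)$ enters with full force; your plan should name it rather than hide it behind a citation.

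Once rationality of $G$ is known, your final paragraph can be streamlined. You already have $G=\tfrac{1}{1-\beta z}-F$ with $F$ continuous on $|z|\le 1$, so the only pole of $G$ in the closed disc is at $1/\beta$. Writing $G=P/Q$ in lowest terms with $Q(0)=1$ (this normalisation is Fatou's lemma on integer power series), every zero of $Q$ besides $1/\beta$ lies in $|z|>1$; since the minimal polynomial of $1/\beta$ over $\QQ$ divides $Q$, every conjugate of $\beta$ other than $\beta$ itself has modulus strictly less than $1$. The almost-periodicity argument is unnecessary, and the exclusion of conjugates on the unit circle---which you correctly flag as the Pisot/Salem dividing line---comes for free from the continuity of $F$ on $|z|=1$, itself a consequence of the $\ell^1$ hypothesis.
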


Pisot numbers are then ``asymptotically integer" in a strong sense. Notice that all integers $n>1$ are Pisot numbers, but no non-integer rational number is Pisot, since the only rationals which are roots of monic polynomials in $\ZZ$ are the integers. The following results relate Pisot numbers and $\beta$-expansions.
\begin{theorem}\cite[Theorem 5]{Bertrand86}\label{pisotsofic} If $\beta$ is a Pisot number, $\mathfrak{s}(\beta)$ is eventually periodic and $X_{\beta}$ is a sofic subshift.
\end{theorem}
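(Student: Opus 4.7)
The plan is to reduce everything to the eventual periodicity of the orbit of $1$ under $T_\beta$. Once $\mathfrak{s}(\beta)$ is shown to be eventually periodic, the sofic conclusion is immediate from the theorem of Bertrand cited just before Theorem~\ref{teoparry}, which says $X_\beta$ is sofic if and only if the $\beta$-expansion of $\beta$ is eventually periodic. So the whole proof reduces to the first assertion.

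To establish eventual periodicity, I would track the orbit $r_n = T_\beta^n(1)$ directly. By the inductive definition in \S\ref{sec:betaexp}, the digits satisfy $a_{n+1}=\lfloor\beta r_n\rfloor$ and $r_{n+1}=\beta r_n - a_{n+1}$, with $a_{n+1}\in\{0,\dots,\lceil\beta\rceil-1\}$. Since $r_0=1$ and the recurrence adds integer multiples of $\beta$-powers, an easy induction shows $r_n\in\ZZ[\beta]$. Because $\beta$ is an algebraic integer of some degree $d$, $\ZZ[\beta]$ is a free $\ZZ$-module of rank $d$ with basis $1,\beta,\dots,\beta^{d-1}$, so we may write $r_n=\sum_{j=0}^{d-1}c_{n,j}\beta^j$ with $c_{n,j}\in\ZZ$. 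The goal is to bound the $c_{n,j}$ uniformly in $n$; this forces $(r_n)$ to take only finitely many values, and hence to be eventually periodic, which in turn forces the digit sequence $(a_n)$ to be eventually periodic.

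The bound will come from the Pisot hypothesis. Let $\beta=\beta_1,\beta_2,\dots,\beta_d$ be the Galois conjugates, and let $r_n^{(i)}$ be the image of $r_n$ under the field embedding $\beta\mapsto\beta_i$. Applying this embedding to the recurrence gives $r_{n+1}^{(i)}=\beta_i r_n^{(i)}-a_{n+1}$. For $i\geq 2$ we have $|\beta_i|<1$, so iterating yields
$$|r_n^{(i)}|\leq |\beta_i|^n+(\lceil\beta\rceil-1)\sum_{j=0}^{n-1}|\beta_i|^j\leq 1+\frac{\lceil\beta\rceil-1}{1-|\beta_i|},$$
which is independent of $n$. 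Combined with the trivial bound $|r_n^{(1)}|=r_n<1$, this uniformly bounds the whole conjugate vector $(r_n^{(1)},\dots,r_n^{(d)})$. Since that vector is obtained from $(c_{n,0},\dots,c_{n,d-1})$ by the Vandermonde matrix in the $\beta_i$, which is invertible because the $\beta_i$ are distinct, the integer coefficients $c_{n,j}$ are uniformly bounded and therefore take finitely many values.

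Finally, I would dispose of the degenerate case separately: if the $\beta$-expansion of $\beta$ terminates in $0$'s then by definition $\mathfrak{s}(\beta)=\hat{\beta}$ is purely periodic, so there is nothing to do; otherwise the argument above gives eventual periodicity of $(r_n)$, hence of the digit sequence, hence of $\mathfrak{s}(\beta)$. The main obstacle — and where the Pisot hypothesis is essential — is the conjugate-bounding step: without $|\beta_i|<1$ the iterated recurrence blows up and the orbit can be infinite (as already happens for non-Pisot algebraic $\beta$, or for non-integer rational $\beta$, which is not Pisot precisely because its ``Galois conjugate'' consideration fails in $\ZZ$).
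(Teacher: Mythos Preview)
The paper does not give its own proof of this statement: Theorem~\ref{pisotsofic} is simply quoted from \cite{Bertrand86} without argument, so there is nothing in the paper to compare your proposal against.

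That said, your argument is correct and is precisely the classical proof (due independently to Bertrand and to K.~Schmidt). The key steps---showing $r_n\in\ZZ[\beta]$ by induction on the recurrence $r_{n+1}=\beta r_n-a_{n+1}$, bounding all Galois conjugates $r_n^{(i)}$ uniformly via the Pisot condition $|\beta_i|<1$ for $i\ge2$, and then inverting the Vandermonde matrix to deduce that the integer coordinate vector $(c_{n,0},\dots,c_{n,d-1})$ is bounded---are exactly the standard ones. The reduction of the sofic conclusion to eventual periodicity via the Bertrand characterization quoted just above Theorem~\ref{teoparry} is also the right move. The only cosmetic issue is your closing parenthetical about non-integer rationals: those fail to be Pisot not because of any conjugate consideration but simply because they are not algebraic integers; this does not affect the proof.
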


\begin{theorem}\cite[Corollary 9]{Bertrand86}\label{thm:pisot-ud}
Let $\beta$ be a Pisot number, $x$ be a real number with $\beta$-expansion $s$ 
and assume that $s$ is $P_{\beta}$-distributed. Then $(x\beta^n)_{n\geq 0}$ is u.d.\ modulo one.
\end{theorem}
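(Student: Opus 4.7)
The plan is to verify the Weyl criterion directly: assuming $x\in[0,1)$ (so $s_0=0$, WLOG), it suffices to show that for every nonzero integer $h$,
$$\frac{1}{N}\sum_{n=0}^{N-1} e^{2\pi i h x \beta^n} \longrightarrow 0.$$
First I would split $x\beta^n$ into an algebraic integer part and a shifted tail. From $x=\sum_{k\geq 1} s_k\beta^{-k}$ one obtains
$$x\beta^n = A_n + T_\beta^n(x), \qquad A_n := \sum_{i=1}^{n} s_i\,\beta^{\,n-i} \in \mathbb{Z}[\beta],$$
and the tail satisfies $T_\beta^n(x)=\pi(T^n s)$ where $\pi(s')=\sum_{k\geq 1} s'_k\beta^{-k}$ is continuous on $X_\beta$. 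So the behaviour of $e^{2\pi i h x\beta^n}$ modulo $1$ is controlled by $\{A_n\}$ together with a continuous function of $T^n s$.

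Next I would use the Pisot hypothesis to turn $A_n$ into a continuous function of a finite past window of $s$. Let $\beta_2,\dots,\beta_d$ be the Galois conjugates of $\beta$, all of modulus strictly less than $1$, and let $\sigma_j:\mathbb{Z}[\beta]\to\mathbb{Z}[\beta_j]$ be the field embeddings. Since $\mathrm{Tr}(A_n)=A_n+\sum_{j\geq 2}\sigma_j(A_n)$ is a rational integer,
$$A_n \equiv -\sum_{j\geq 2}\sigma_j(A_n) \pmod{\mathbb{Z}}, \qquad \sigma_j(A_n) = \sum_{k=0}^{n-1}\beta_j^{\,k}\,s_{n-k}.$$
Because $|\beta_j|<1$ and the digits $s_i$ are bounded by $\lceil\beta\rceil-1$, the sums $\sigma_j(A_n)$ are uniformly bounded, and truncating the geometric series to its first $K$ terms incurs an error $O\bigl(\max_j |\beta_j|^K\bigr)$, uniformly in $n$. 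So for any $\varepsilon>0$ I can find $K_\varepsilon$ and a continuous function $F_\varepsilon$ on $X_\beta$ (depending only on finitely many coordinates around position $n$) with
$$\bigl|e^{2\pi i h x\beta^n} - F_\varepsilon(T^n s)\bigr| < \varepsilon \qquad \text{for all sufficiently large } n.$$

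Now $s$ is $P_\beta$-distributed, so Cesàro averages of $F_\varepsilon$ along the orbit of $s$ converge to $\int F_\varepsilon\,dP_\beta$. Letting $\varepsilon\to 0$, the Weyl sum in question is controlled by $\int_{X_\beta} e^{2\pi i h(-\sum_j \sigma_j + \pi)}\,dP_\beta$, so the proof reduces to showing that this integral vanishes for every $h\neq 0$. This is where I expect the real work. My approach would be to push forward $P_\beta$ to $\widehat P_\beta$ on $[0,1)$ via $p_\beta^{-1}$ and exploit the closed form~\eqref{eqn:teoparry}: the Parry density is a (bounded) function of $x$ by~\eqref{eqn:bound_hatP}, so the integral becomes a Lebesgue integral of $e^{2\pi i h y}$ against an absolutely continuous measure, and the Pisot decoupling between ``past'' (the $\sigma_j$ contributions) and ``future'' (the tail $\pi(T^n s)\in[0,1)$) lets the future integral be computed as a genuine Fourier coefficient, which vanishes. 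Making this decoupling rigorous---essentially exhibiting the correct factorisation of $P_\beta$ on a natural two-sided extension---is the main obstacle; everything else is bookkeeping once the Pisot-trace identity and the $P_\beta$-distribution hypothesis are in hand.
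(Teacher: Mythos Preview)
The paper does not prove this theorem; it is simply quoted as \cite[Corollary 9]{Bertrand86}, so there is no in-paper argument to compare against.

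Your overall strategy (Weyl criterion, the splitting $x\beta^n=A_n+T_\beta^n(x)$ with $A_n\in\mathbb{Z}[\beta]$, and the Pisot trace identity $A_n\equiv -\sum_{j\ge2}\sigma_j(A_n)\pmod{\mathbb{Z}}$) is the standard one and is sound up to the last step. The gap is in your explanation of why the limiting integral vanishes. You frame the ``main obstacle'' as a past/future decoupling and a factorisation of $P_\beta$ on a two-sided extension, hoping the conditional integral over the future becomes a Fourier coefficient that vanishes. That is not how it works: the conditional law of $\pi(T^Ks')$ given the first $K$ digits is not Lebesgue, so no such cancellation occurs.

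What actually closes the argument is simpler and does not require any two-sided machinery. With your truncation at level $K$, the approximating function on $X_\beta$ satisfies the \emph{exact} identity
\[
F_K(s')\;=\;\exp\bigl(2\pi i h\,\pi(s')\,\beta^{K}\bigr),
\]
because the trace identity gives $-\sum_{j\ge2}\sum_{\ell=1}^{K}s'_\ell\beta_j^{K-\ell}\equiv\sum_{\ell=1}^{K}s'_\ell\beta^{K-\ell}\pmod{1}$, and adding $\pi(T^{K}s')$ recovers $\pi(s')\beta^{K}$. Pushing forward by $p_\beta^{-1}$,
\[
\int_{X_\beta} F_K\,dP_\beta \;=\; \int_0^1 e^{2\pi i h\,y\,\beta^{K}}\,d\widehat P_\beta(y)\;=\;\int_0^1 e^{2\pi i h\,y\,\beta^{K}}\,\rho(y)\,dy,
\]
with $\rho\in L^1[0,1]$ the Parry density from~\eqref{eqn:teoparry}. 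This is not a Fourier coefficient that vanishes identically; rather, the frequency $h\beta^{K}\to\infty$, so the integral tends to $0$ by the Riemann--Lebesgue lemma. Combining this with your uniform approximation error $O(\max_j|\beta_j|^{K})$ and letting $K\to\infty$ gives the Weyl criterion. So the missing ingredient is Riemann--Lebesgue against the absolutely continuous Parry measure, not a past/future factorisation.
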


None of the above implications has a true converse.

\section{Polynomial time randomness}\label{sec:polyrndness}

In this section we will use the martingale constructed in \S\ref{sec:main} to show that if $x\in[0,1]$ is a real whose binary expansion is polynomial time random (i.e.\ no feasible martingale succeeds on it), then $(x\beta^n)_{n\in\NN}$ is u.d.\ modulo one for any Pisot $\beta$.

The reasoning follows by contradiction: if $x$ is such that $(x\beta^n)_{n\geq 0}$ is not u.d.\ modulo one for some Pisot base $\beta$, we know by Theorem~\ref{thm:pisot-ud} that there is some word
$\sigma$ in $\Sigma_\beta^*$ whose average occurrences in the $\beta$-expansion of $x$ do not converge to $P_{\beta}(\sigma)$.
From Theorem~\ref{maintheorem} we then get a $P_\beta$-martingale on a DFA that succeeds on the $\beta$-expansion of $x$. Our task in this section is to show that such a martingale can be translated to a base 2 martingale that is computable in polynomial time.
Base 2 suffices because of the (integer) base invariance of polynomial time randomness~\cite{Fi2013}.

The rest of the section is organized as follows. In \S\ref{subsec:change-base} we show a feasible method to approximate dyadic rationals with reals in base $\beta$. In \S\ref{subsec:savings} we introduce the {\em savings property} for $P$-martingales and show that any feasible $P$-martingale can be translated to one with the savings property, preserving the succeeding points. In \S\ref{subsec:measure} we derive some useful properties of the Parry measure $P_\beta$ and introduce the measure $\mu_M$ over $[0,1]$ induced by any $P_\beta$-martingale $M$. In \S\ref{subsec:cdf} we show that the cumulative distribution function of $\mu_M$ is polynomial time computable when restricted to $\beta$-adic inputs. Finally, in \S\ref{subsec:poly-implies-betanormal} we show the main result via an `almost Lipschitz' property, as in~\cite{Fi2013}.

\subsection{Dyadic rationals to base $\beta$}\label{subsec:change-base}

We derive some feasibility properties of $\beta$-ary representation.

\begin{proposition}\label{prop:shift_is_poly}
If $\beta>1$ is Pisot then the set
$
L(X_\beta)=\{\tau\in\Sigma_\beta^*\mid\tau 0^\infty\in X_\beta\}
$ is decidable in linear time.
\end{proposition}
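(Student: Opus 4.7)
The plan is to exploit Theorem~\ref{pisotsofic}, which guarantees that for Pisot $\beta$ the sequence $\mathfrak{s}(\beta)=d_1d_2d_3\dots$ is eventually periodic, so it admits a finite description $\mathfrak{s}(\beta)=uv^\infty$ with $u,v\in\Sigma_\beta^*$. This finite description is exactly what lets a fixed finite-state machine decide membership in $L(X_\beta)$.

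First I would rephrase membership combinatorially. Unfolding the definition of $X_\beta$, the condition $\tau 0^\infty\in X_\beta$ says $T^i(\tau 0^\infty)\leqlex\mathfrak{s}(\beta)$ for every $i\geq 0$; for $i\geq |\tau|$ the shift equals $0^\infty$, which is strictly below $\mathfrak{s}(\beta)$ since its leading digit is $\lfloor\beta\rfloor\geq 1$, so only $i<|\tau|$ are relevant. I would then show this is equivalent to $\tau$ containing no \emph{forbidden factor}, meaning no word of the form $d_1\dots d_{j-1}a$ with $a>d_j$. One direction is immediate. For the converse, at the first position $j$ where some $T^i(\tau 0^\infty)$ exceeds $\mathfrak{s}(\beta)$ the offending symbol equals $\tau_{i+j-1}>d_j\geq 0$, so it is at least $1$, hence cannot lie in the trailing zeros and must be a genuine $\tau$-symbol, putting the whole forbidden factor inside~$\tau$.

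With this reformulation in hand I would build a deterministic finite automaton $\mathcal{A}_\beta$ over $\Sigma_\beta$ whose non-reject states record the length $k$ of the longest prefix of $\mathfrak{s}(\beta)$ currently matching a suffix of the input read so far. From state $k$ on symbol $a$, the automaton advances to $k+1$ if $a=d_{k+1}$, moves to an absorbing reject state if $a>d_{k+1}$, and retreats via a KMP-style failure function if $a<d_{k+1}$. Because $\mathfrak{s}(\beta)=uv^\infty$ is eventually periodic, only finitely many inequivalent values of $k$ arise (bounded by $|u|+|v|$), so $\mathcal{A}_\beta$ is a genuine finite automaton depending only on $\beta$; feeding $\tau$ through it takes $O(|\tau|)$ time, which yields the claim. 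The main technical point is verifying that the KMP-style failure function descends to a finite quotient under the periodic tail of $\mathfrak{s}(\beta)$, a standard property of pattern matching on ultimately periodic strings that I would either invoke directly or justify by a short pumping argument on the periodic part $v$.
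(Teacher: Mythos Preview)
Your proposal is correct and follows essentially the same approach as the paper: both rely on Theorem~\ref{betacaract} to reduce membership to lexicographic comparison with $\mathfrak{s}(\beta)$ and on Theorem~\ref{pisotsofic} to obtain the eventual periodicity $\mathfrak{s}(\beta)=uv^\infty$. The paper's proof is a one-line sketch invoking these ingredients and ``the linear time complexity of lexicographic comparison''; you have simply made explicit the finite automaton (via the forbidden-factor reformulation and a KMP-style construction) that the paper leaves implicit, which in fact clarifies why the total work is linear rather than the quadratic bound one gets from naively performing $|\tau|$ separate comparisons.
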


\begin{proof}
Immediate from Theorem~\ref{betacaract}, the fact that $\mathfrak{s}(\beta)$ is eventually periodic (Theorem~\ref{pisotsofic}) and the linear time complexity of lexicographic comparison.
\end{proof}

Fix a finite alphabet $\Sigma$.
We say that a function $g\colon\Sigma^*\to\RR$ is {\em computable} if there is a computable function $\widehat g\colon\Sigma^*\times\NN\to\QQ$ such that for all $\sigma$ and $i$ we have $|\widehat g(\sigma,i)-g(\sigma)|\leq 2^{-i}$. We call $\widehat g$ a {\em computable approximation} of $g$. We say that $g$ is {\em $t(n)$-computable} if there is a Turing machine which on input $\sigma$ and $i$ computes $\widehat g(\sigma,i)$ in time $O(t(i+|\sigma|))$. As usual, we say that $g$ is {\em polynomial time computable} if it is $t(n)$-computable for some polynomial $t$.

\begin{observation}\label{obsering}If $f,g$ are polynomial time computable then $f+g$ and $fg$ are polynomial time computable. 
\end{observation}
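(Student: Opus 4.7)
The plan is to give the standard construction for closure of polynomial time computable reals under addition and multiplication, with the only subtlety being a uniform bound on the magnitudes of $f$ and $g$ used for the product.

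For the sum, I would simply define $\widehat{f+g}(\sigma,i)=\widehat f(\sigma,i+1)+\widehat g(\sigma,i+1)$. The triangle inequality gives an error of at most $2^{-(i+1)}+2^{-(i+1)}=2^{-i}$, and the running time is polynomial in $i+|\sigma|$ because both $\widehat f(\sigma,i+1)$ and $\widehat g(\sigma,i+1)$ are computed in polynomial time (by hypothesis) and the rational numbers they output have polynomial bit length, so the final addition is polynomial as well.

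For the product, I would first extract a magnitude bound. Since $\widehat f(\sigma,1)$ and $\widehat g(\sigma,1)$ are produced in time $t(|\sigma|+1)$ for some polynomial $t$, the bit length of these rationals is at most $t(|\sigma|+1)$, and hence $|f(\sigma)|,|g(\sigma)|\leq B(\sigma)$ where $B(\sigma):=2^{t(|\sigma|+1)}+1$. Note $\log_2 B(\sigma)$ is polynomial in $|\sigma|$. I would then set $j:=i+2+\lceil \log_2 B(\sigma)\rceil$ (computable in polynomial time in $i+|\sigma|$ since it requires only the polynomial $t$) and define $\widehat{fg}(\sigma,i):=\widehat f(\sigma,j)\cdot\widehat g(\sigma,j)$. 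The standard estimate
\[
|\widehat f\widehat g-fg|\;\leq\;|\widehat f|\,|\widehat g-g|+|g|\,|\widehat f-f|\;\leq\;(B(\sigma)+1)2^{-j}+B(\sigma)2^{-j}\;\leq\;2^{-i}
\]
then yields the required precision. The running time is polynomial in $i+|\sigma|$ because $j$ is polynomial in $i+|\sigma|$, the computations of $\widehat f(\sigma,j)$ and $\widehat g(\sigma,j)$ take time polynomial in $j+|\sigma|$, and multiplication of rationals of polynomial bit length takes polynomial time.

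There is no real obstacle here; the only step requiring any care is justifying the magnitude bound $B(\sigma)$, which is where the definition of polynomial time computability (time bounded by $O(t(i+|\sigma|))$ for a polynomial $t$) is essential—without such a bound on $|f(\sigma)|,|g(\sigma)|$, the precision needed for the product could not be chosen uniformly in polynomial time.
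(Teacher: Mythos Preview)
Your argument is correct and is the standard proof of closure under ring operations for polynomial time computable real-valued functions. The paper does not actually supply a proof of this observation---it is stated as a routine fact and left to the reader---so there is nothing to compare against; your write-up would serve perfectly well as the missing justification. One cosmetic remark: rather than invoking the time-bound polynomial $t$ directly to define $B(\sigma)$, you can simply compute $\widehat f(\sigma,1)$ and $\widehat g(\sigma,1)$, read off their bit lengths, and use those to bound $|f(\sigma)|$ and $|g(\sigma)|$; this avoids any question about whether $t$ is explicitly available to the algorithm.
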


\newcommand{\rea}[1]{\langle #1 \rangle_\beta}
\newcommand{\reabin}[1]{\langle #1 \rangle_2}

For $\beta>1$, let $\rea{\cdot}\colon\Sigma_\beta^*\to\RR$ be the function
$
\rea \tau=\sum_{k=1}^{|\tau|} \tau(k-1)\beta^{-k}
$. Observe that in case $\tau$ is a prefix of some sequence in $X_\beta$ then $\rea\tau$ is the only real $x\in[0,1)$ such that
$p_\beta(x)=\tau 0^\infty$.

\begin{proposition}\label{cor:rea_is_poly}
If $\beta>1$ is Pisot, then the function function $\rea{\cdot}$ is polynomial time computable.
\end{proposition}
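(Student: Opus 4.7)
The plan is to evaluate $\rea{\tau}=\sum_{k=1}^{|\tau|}\tau(k-1)\beta^{-k}$ by Horner's scheme using a sufficiently precise rational approximation to $\beta^{-1}$, and to control error propagation by exploiting $\beta^{-1}<1$. Concretely, on input $(\tau,i)$ I will: (1) compute a rational $q\in\QQ$ with $|q-\beta^{-1}|\leq 2^{-p}$ for a precision $p$ to be fixed; (2) iterate $y_0=0$ and $y_{j+1}=\operatorname{round}_p\bigl((y_j+\tau(|\tau|-1-j))\cdot q\bigr)$ for $j=0,\dots,|\tau|-1$, where $\operatorname{round}_p$ truncates to $p$ bits; and (3) return $y_{|\tau|}$.

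For step~(1), since $\beta$ is Pisot, it is a root of a fixed monic integer polynomial $\pi\in\ZZ[X]$ whose root $\beta$ is isolated from its conjugates. Starting from a constant-precision seed and applying Newton's method to $\pi$ (or to $X\pi(1/X)$ for $\beta^{-1}$), the number of correct bits at least doubles per iteration, so one obtains $q$ with $|q-\beta^{-1}|\leq 2^{-p}$ in time polynomial in $p$ (the per-step cost is polynomial in $p$ because the iterates are rationals of bit-size $O(p)$). This is a standard fact about algebraic numbers and I will invoke it without proof.

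For steps~(2)--(3), the exact Horner iterates $y_j^\star=(y_{j-1}^\star+\tau(|\tau|-1-(j-1)))\beta^{-1}$ satisfy $0\leq y_j^\star\leq \lceil\beta\rceil/(1-\beta^{-1})=:C_1$, since all digits lie in $[0,\lceil\beta\rceil-1]$ and $\beta^{-1}<1$. Writing $e_j=y_j-y_j^\star$, one obtains the recurrence
\[
|e_j|\;\leq\;|e_{j-1}|\cdot q\;+\;C_1\,|q-\beta^{-1}|\;+\;2^{-p}\;\leq\;q\,|e_{j-1}|\;+\;(C_1+1)\,2^{-p},
\]
where the $2^{-p}$ term absorbs the per-step truncation. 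Iterating and summing the geometric series (using $q<1$ for $p$ large enough) gives $|e_j|\leq (C_1+1)2^{-p}/(1-q)$, which is bounded by a constant $C_2$ depending only on $\beta$ times $2^{-p}$, \emph{independently} of $|\tau|$. Choosing $p=i+\lceil\log_2 C_2\rceil+1$ ensures $|y_{|\tau|}-\rea{\tau}|\leq 2^{-i}$. Each Horner step performs a constant number of arithmetic operations on rationals of bit-size $O(p)$ followed by truncation, which costs $\mathrm{poly}(p)$ time; $|\tau|$ steps then cost $\mathrm{poly}(p,|\tau|)=\mathrm{poly}(i+|\tau|)$ in total.

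The only genuinely delicate point is the error analysis, and the reason it goes through cleanly is precisely that $\beta>1$: the recurrence for $e_j$ is a contraction rather than an amplification, so approximation errors do not compound over the $|\tau|$ Horner steps. Everything else (the Pisot-specific input to Newton, the rational arithmetic bookkeeping, and the choice of $p$) is routine, so by Observation~\ref{obsering} one could equivalently package the computation as a composition of polynomial-time primitives.
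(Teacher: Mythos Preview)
Your proof is correct, but it takes a more explicit and algorithmic route than the paper's. The paper simply invokes two black-box facts from the complexity theory of real numbers: algebraic numbers are polynomial time computable, and the polynomial time computable reals form a field (both cited from Ko), so each $\beta^{-k}$ is polynomial time computable; then Observation~\ref{obsering} handles the sum and products. Your argument instead builds the approximation by hand via Horner's scheme, and the key point you exploit---that $\beta^{-1}<1$ makes the Horner recurrence a contraction, so per-step rounding errors do not accumulate with $|\tau|$---is a genuinely nicer observation than what the paper needs: it shows the required intermediate precision $p$ is $O(i)$ rather than $O(i+|\tau|)$, which the field-closure argument would not immediately give. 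The trade-off is that the paper's proof is two lines and yours is a page; on the other hand, yours is self-contained and yields an actual algorithm with explicit constants. A minor bookkeeping remark: in your error recurrence the coefficient of $|q-\beta^{-1}|$ should be $y_{j-1}^\star+d_j\leq C_1+\lceil\beta\rceil-1$ rather than $C_1$, but this only changes the constant $C_2$ and does not affect the argument.
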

\begin{proof}The number of summands in $\rea\tau$ is the length of $\tau$, which is computable in linear time. For each summand, $\tau(k)$ is computable in linear time, and $\beta$ is an algebraic number, which is computable in polynomial time~\cite[Corollary 4.3.1]{Ko82}. Since numbers computable in polynomial time form a field~\cite[Corollary 4.3.2]{Ko82}, $\beta^{-k}$ is computable in polynomial time. Then  both $\tau(k)$ and $\beta^{-k}$ are polynomial-time computable and Observation~\ref{obsering} applies.
\end{proof}

Given a real $r\in[0,1)$ and $i\in\NN$, a word $\tau\in L(X_\beta)$ is said to be an {\em approximation of $r$ in base $\beta$ with error $2^{-i}$} if $|\rea{\tau}-r|\leq 2^{-i}$.

\begin{proposition}\label{prop:approx}
If $\beta>1$ is Pisot then the problem of finding an approximation in base $\beta$ of a dyadic rational $\reabin\sigma$ ($\sigma\in \{0,1\}^*$) with error $2^{-i}$ is computable in time polynomial in $|\sigma|+i$.
\end{proposition}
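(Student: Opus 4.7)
The plan is to output $\tau$ as the length-$N$ prefix of the greedy $\beta$-expansion of $r=\reabin{\sigma}$, where $N=\lceil i/\log_2\beta\rceil+1$. Such a prefix is automatically in $L(X_\beta)$ by Theorem~\ref{betacaract}, and the residual identity $r=\rea{\tau}+r_N\beta^{-N}$ with $r_N\in[0,1)$ gives $|r-\rea{\tau}|<\beta^{-N}<2^{-i}$. Since $N=O(i)$, it suffices to show that a single step of the greedy recursion $s_{n+1}=\lfloor\beta r_n\rfloor$, $r_{n+1}=\beta r_n-s_{n+1}$ can be executed in time $\mathrm{poly}(|\sigma|)$, uniformly in $n$.

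The first key step is to carry $r_n$ as an exact element of $\QQ(\beta)$ in the basis $\{1,\beta,\dots,\beta^{d-1}\}$, where $d$ is the degree of the minimal polynomial of $\beta$. Writing $r=m/2^{|\sigma|}$ with $m\in\ZZ$, one verifies by induction that
$r_n=2^{-|\sigma|}\sum_{j=0}^{d-1}a_j^{(n)}\beta^j$
for integer coefficients $a_j^{(n)}$. The Pisot hypothesis now enters decisively: applying the embedding $\beta\mapsto\beta_j$ to the closed form $r_n=\beta^n r-\sum_{k=1}^n s_k\beta^{n-k}$ and using $|\beta_j|<1$ for $j\geq 2$ gives
\[|r_n^{(j)}|\ \leq\ |\beta_j|^n|r|+(\lceil\beta\rceil-1)\sum_{k=0}^{n-1}|\beta_j|^k\ \leq\ C,\]
a constant depending only on $\beta$. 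Since the map $(a_0^{(n)},\dots,a_{d-1}^{(n)})\mapsto(r_n^{(1)},\dots,r_n^{(d)})$ is an invertible (Vandermonde) linear map scaled by $2^{-|\sigma|}$, every coefficient satisfies $|a_j^{(n)}|=O(2^{|\sigma|})$. Consequently $r_n$ has bitlength $O(|\sigma|)$, and the updates ``multiply by $\beta$ and reduce modulo the minimal polynomial'' and ``subtract the integer $s_{n+1}$'' take $\mathrm{poly}(|\sigma|)$ time per step.

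Computing $s_{n+1}=\lfloor\beta r_n\rfloor$ requires a real approximation of $\beta r_n$ sharp enough to identify the integer part. The same norm argument yields the needed separation: if $\beta r_n\neq k$ for some $k\in\ZZ$, then $2^{|\sigma|}(\beta r_n-k)$ is a nonzero element of $\ZZ[\beta]$, so its algebraic norm is a nonzero rational integer, whence
\[|\beta r_n-k|\ \geq\ \frac{1}{2^{d|\sigma|}\,\prod_{j\geq 2}|(\beta r_n-k)^{(j)}|}\ \geq\ 2^{-d|\sigma|-O(1)},\]
again because the conjugates $(\beta r_n-k)^{(j)}=\beta_j r_n^{(j)}-k$ are bounded by a $\beta$-dependent constant. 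Hence it is enough to evaluate $\beta r_n$ to precision $2^{-d|\sigma|-O(1)}$, which is $\mathrm{poly}(|\sigma|)$ time by the polynomial-time computability of $\beta$ (Proposition~\ref{cor:rea_is_poly}). The degenerate case $\beta r_n=k$ is detected exactly within the $\QQ(\beta)$ representation (non-constant coefficients vanish and $2^{|\sigma|}$ divides the constant). Summing over the $N=O(i)$ greedy steps yields the total bound $\mathrm{poly}(|\sigma|+i)$.

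The main obstacle is precisely this precision question: a priori $\beta r_n$ could lie arbitrarily close to an integer and force unbounded precision in the floor computation. The Pisot hypothesis is what makes the argument work, since boundedness of the conjugates of $r_n$ is exactly what converts the ``nonzero algebraic integer has norm $\geq 1$'' inequality into a polynomial-in-$|\sigma|$ lower bound on $|\beta r_n-k|$.
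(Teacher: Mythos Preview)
Your proof is correct and takes a genuinely different route from the paper. The paper runs an approximate search (Algorithm~1): at each step it uses the polynomial-time approximation $\rea{\cdot,i+2}$ of $\rea{\cdot}$ to pick the next digit, and terminates once $|\rea{\tau,i+2}-r|\leq 2^{-i-1}$. Correctness of the output does not require $\tau$ to be a prefix of $p_\beta(r)$; termination is argued via the forward reference $\lambda(I_\tau)\leq\beta^{-|\tau|}$ (Corollary~\ref{eq:bound_L}). The Pisot hypothesis enters only indirectly, through Propositions~\ref{prop:shift_is_poly} and~\ref{cor:rea_is_poly}.

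You instead compute the true greedy expansion exactly in $\ZZ[\beta]$ and make the Pisot hypothesis do real work twice: first, the bound $|\beta_j|<1$ for $j\geq 2$ keeps all Galois conjugates of $r_n$ uniformly bounded, so the Vandermonde inversion gives $|a_j^{(n)}|=O(2^{|\sigma|})$ and hence $O(|\sigma|)$ bitlength per step; second, the same conjugate bound turns the algebraic-norm inequality $|N(2^{|\sigma|}(\beta r_n-k))|\geq 1$ into the separation $|\beta r_n-k|\geq 2^{-d|\sigma|-O(1)}$, which makes the floor computable with polynomial precision. Your approach yields a slightly stronger conclusion (the output is an actual prefix of $p_\beta(r)$), is self-contained (no forward reference), and exposes precisely where Pisot is needed. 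The paper's argument is more elementary and black-boxes the arithmetic of $\beta$ into Proposition~\ref{cor:rea_is_poly}, at the cost of not producing the canonical expansion.
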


\begin{proof}
Let $\rea{\cdot,\cdot}\colon\Sigma_\beta^*\times\NN\to\QQ$ be a polynomial time computable approximation of $\rea{\cdot}\colon\Sigma_\beta^*\to\RR$ (which exists by Proposition~\ref{cor:rea_is_poly}). Consider Algorithm~\ref{alg:approx}.

\begin{figure}\center
\begin{minipage}[c]{12cm}
    \begin{algorithm}[H] \small
    \caption{Approximation of a dyadic rational in base $\beta$}
    \label{alg:approx}
    \SetKwInOut{Input}{input} \SetKwInOut{Output}{output}

    \Input{$\sigma\in \{0,1\}^*$, $i\in\NN$}

    \Output{$\tau$, a prefix of some sequence in $X_\beta$, such that $|\rea{\tau}-\reabin\sigma|\leq 2^{-i}$} \BlankLine

    let $r=\reabin{\sigma}$ and $\tau=\emptyset$

    \While{$|\rea{\tau,i+2}-r|> 2^{-i-1}$} {

            let $S =  \{a\in\Sigma_\beta \mid \tau a0^\infty \in X_\beta\}$

            let $b\in S$ be the greatest such that $\rea{\tau b,i+2}\leq r$

            \eIf{$b=\max S$}
                {$\tau = \tau b$}
                {
                $b'=b+1$

                \eIf{$\rea{\tau b',i+2}-2^{-i-1}<r$}
                    {$\tau = \tau b'$}
                    {$\tau = \tau b$}
                }
    }
    \end{algorithm}
\end{minipage}
\end{figure}

Notice that when the algorithm terminates, we have $|\rea{\tau,i+2}-r|\leq 2^{-i-1}$; since $|\rea{\tau,i+2}-\rea{\tau}|\leq 2^{-i-2}<2^{-i-1}$, we have $|\rea{\tau}-r|\leq 2^{-i}$. Observe also that
by construction, $\tau$ is always a prefix of some sequence in $X_\beta$. Hence the value of $\tau$ by the time the algorithm terminates satisfies the postcondition. After each execution of the loop body, either
\begin{enumerate}
\item\label{alg:1} $|\rea{\tau,i+2}-r| \leq 2^{-i-1}$ (in which case it will immediately terminate), or
\item\label{alg:2} $\tau \prec p_\beta(r)$.
\end{enumerate}
Let $I_\tau=\{x\in[0,1)\mid \tau\prec p_\beta(x)\}$.
If~\ref{alg:1} does not hold then, by construction, $r\in I_\tau$, and it is clear that in case $\lambda I_\tau\leq 2^{-i-2}$ then it terminates (since $|\rea{\tau,i+2}-\rea{\tau}|\leq 2^{-i-2}$ and $|\rea\tau-r|\leq\lambda I_\tau\leq 2^{-i-2}$, and so $|\rea{\tau,i+2}-r|\leq 2^{-i-1}$).
At each iteration the string $\tau$ is extended in one symbol. We will later see (Corollary~\ref{eq:bound_L}) that $\lambda I_\tau\leq \beta^{-|\tau|}$, if $\beta^{-|\tau|}\leq 2^{-i-2}$ then the algorithm terminates, and so $|\tau|$ is $O(i)$. By
Proposition~\ref{prop:shift_is_poly} and Proposition~\ref{cor:rea_is_poly}, the execution of a single iteration is polynomial in $|\sigma|+i+|\tau|$. Since both the number of iterations and $|\tau|$ is $O(i)$, the  execution of  Algorithm~\ref{alg:approx} on in input $\sigma,i$ is also polynomial in  $|\sigma|+i$.
\end{proof}

\subsection{The savings property}\label{subsec:savings}

We say that a $P$-martingale $M$ on $L\subseteq\Sigma^*$ has the {\em savings
property} if there is $c>0$ such that for all $\tau,\sigma\in
L$, if $\tau\succeq\sigma$ then $M(\sigma)-M(\tau)\leq c$.

\begin{proposition}\label{prop:savings-property}
Let $L\subseteq\Sigma^*$ be a nonempty, factorial and prolongable language, let $P$ be an $L$-supported probability measure on $\Sigma^\NN$ such that there is $a>0$ such that $1-P(\sigma b|\sigma)\leq a\cdot P(\sigma b|\sigma)$ for all $\sigma b\in L$, and let $M$ be a $P$-martingale on $L$ with the savings property via $c$. Then
$M(\sigma)\leq c\cdot d\cdot |\sigma|+M(\emptyset)$ for all $\sigma\in L$.
\end{proposition}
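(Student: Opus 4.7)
The plan is to reduce the proposition to a one-step increment bound: if $\sigma b\in L$ then $M(\sigma b)\leq M(\sigma)+ca$, where I read the $d$ in the stated conclusion as the constant $a$ from the hypothesis (this is the only interpretation that makes the proposition meaningful, since no other constant has been introduced). Given such a one-step bound, a straightforward induction on $|\sigma|$ with base case $\sigma=\emptyset$ immediately yields $M(\sigma)\leq M(\emptyset)+ca|\sigma|$.

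To establish the one-step bound, I would fix $\sigma b\in L$ and set $p=P(\sigma b\mid\sigma)$, which is strictly positive because $P$ is $L$-supported and $\sigma b\in L$. Starting from the $P$-martingale identity
\[
M(\sigma)\;=\;\sum_{\substack{a\in\Sigma\\ \sigma a\in L}} P(\sigma a\mid\sigma)\,M(\sigma a),
\]
I would substitute, for each $a$ with $\sigma a\in L$, the savings inequality $M(\sigma a)\geq M(\sigma)-c$, which is exactly the assumed property $M(\sigma)-M(\tau)\leq c$ applied to the one-step extension $\tau=\sigma a\succeq\sigma$. Singling out the $b$-summand gives
\[
M(\sigma)\;\geq\; p\,M(\sigma b)+(1-p)\bigl(M(\sigma)-c\bigr),
\]
and rearranging yields $M(\sigma b)\leq M(\sigma)+(1-p)c/p$. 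The ratio hypothesis $1-P(\sigma b\mid\sigma)\leq a\cdot P(\sigma b\mid\sigma)$ is precisely what is needed to replace $(1-p)/p$ by $a$, producing $M(\sigma b)\leq M(\sigma)+ca$.

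There is no real obstacle here; the argument is a direct algebraic consequence of the martingale equation, the one-sided savings inequality, and the bounded-ratio assumption. The only point where some care is warranted is that the sum in the $P$-martingale identity ranges only over symbols $a$ with $\sigma a\in L$, so the savings inequality is invoked only on words that actually belong to $L$, which is its domain of validity; factoriality and prolongability of $L$ ensure all the relevant cylinders are nonempty, so every conditional probability is well-defined and the induction proceeds without gaps.
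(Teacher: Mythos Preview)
Your proposal is correct and is essentially identical to the paper's proof: both solve the $P$-martingale identity for $M(\sigma b)$, apply the savings inequality $M(\sigma d)\geq M(\sigma)-c$ to the remaining summands, simplify to $M(\sigma b)\leq M(\sigma)+c\cdot\frac{1-p}{p}\leq M(\sigma)+ca$, and conclude by induction. Your reading of the constant $d$ in the statement as the hypothesis constant $a$ is exactly what the paper's own proof does.
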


\begin{proof}
As in \cite[Proposition]{Fi2013}, proof is by induction on the length of $\sigma$. For the inductive step, we only need notice that $P(\sigma b|\sigma)$ is well defined and positive. Then
\begin{align*}
M(\sigma b) &= \frac{M(\sigma)- \sum_{d\in \Sigma,d\not=b}P(\sigma d|\sigma)M(\sigma d)}{P(\sigma b|\sigma) }\\
&\leq \frac{M(\sigma)- \sum_{d\in \Sigma,d\not=b}P(\sigma d|\sigma)(M(\sigma)-c)}{P(\sigma b|\sigma) } \tag{$M$ has the savings property}\\
&=M(\sigma)+c\cdot \frac{1-P(\sigma b|\sigma)}{P(\sigma b|\sigma)}\leq M(\sigma)+c\cdot a\\
&\leq c\cdot a\cdot |\sigma|+M(\emptyset)+c\cdot a=c\cdot a\cdot |\sigma b|+M(\emptyset). \tag{inductive hypothesis}
\end{align*}
This concludes the proof.
\end{proof}

\begin{lemma}[Polynomial time bounded savings property]\label{lem:convert-to-savings-property}
For each polynomial time computable $P$-martingale $N$ there is a polynomial time computable $P$-martingale $M$ which has the savings property and succeeds on all the sequences that $N$ succeeds on.
\end{lemma}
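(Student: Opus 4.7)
The plan is to adapt the classical savings trick from algorithmic randomness to the $P$-martingale setting. Given $N$, I will construct $M$ by splitting its capital at each $\sigma\in L$ into an \emph{active} part $A(\sigma)$ that bets like $N$ and a monotone \emph{saved} part $S(\sigma)$ that never loses value, and then set $M(\sigma)=A(\sigma)+S(\sigma)$. Concretely, put $A(\emptyset)=1$, $S(\emptyset)=0$, and for $\sigma b\in L$ let $\tilde A=A(\sigma)\cdot N(\sigma b)/N(\sigma)$; if $\tilde A\geq 2$, set $A(\sigma b)=1$ and $S(\sigma b)=S(\sigma)+\tilde A-1$, otherwise set $A(\sigma b)=\tilde A$ and $S(\sigma b)=S(\sigma)$. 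The central identity, valid in both branches, is $A(\sigma b)+S(\sigma b)=A(\sigma)\cdot N(\sigma b)/N(\sigma)+S(\sigma)$.

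Three properties then follow routinely from this identity. First, $M$ is a $P$-martingale: summing the identity against $P(\cdot\mid\sigma)$ and using the $P$-martingale property of $N$ gives $\sum_{b}P(\sigma b\mid\sigma)M(\sigma b)=A(\sigma)\sum_{b}P(\sigma b\mid\sigma)N(\sigma b)/N(\sigma)+S(\sigma)=A(\sigma)+S(\sigma)=M(\sigma)$. Second, $M$ has the savings property with constant $c=2$: by the case split $A(\sigma)\leq 2$ at every $\sigma$ (either $A(\sigma)=1$ after a freeze, or $A(\sigma)=\tilde A<2$), and $S$ is non-decreasing on prefixes, hence $M(\sigma)-M(\tau)\leq A(\sigma)-A(\tau)\leq 2$ whenever $\tau\succeq\sigma$. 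Third, $M$ succeeds wherever $N$ does: if $\limsup_{n}N(s\uh{n})=\infty$ then for any fixed prefix $\tau\prec s$ the ratio $N(s\uh{n})/N(\tau)$ is unbounded, so starting from any freeze point along $s$ the active capital must reach $2$ again, triggering the next freeze; infinitely many freezes occur along $s$, each contributing at least $1$ to $S$, so $S(s\uh{n})\to\infty$ and $M(s\uh{n})\geq S(s\uh{n})$ diverges.

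The main obstacle is the polynomial time computability of $M$, since the freeze condition ``$\tilde A\geq 2$'' is a real-number comparison that cannot be decided from polynomial time approximations to $N$ alone. Following the approach used in~\cite{Fi2013}, I would replace the sharp threshold by a rational decision rule: compute an approximation $\widehat N(\sigma,p(|\sigma|))$ at precision $2^{-p(|\sigma|)}$ for a suitable polynomial $p$, form a rational approximant of $\tilde A$, and declare a freeze once this approximant crosses a rational threshold strictly separated from~$2$ by a margin that still shrinks along the sequence. The values $A(\sigma b),S(\sigma b)$ are then reassigned so that the key identity $A(\sigma b)+S(\sigma b)=A(\sigma)\cdot N(\sigma b)/N(\sigma)+S(\sigma)$ continues to hold, keeping $M$ an exact $P$-martingale whose betting factors are polynomial time computable. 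This perturbation inflates $A$ by at most a multiplicative constant, so the savings property survives with a larger constant, and the succession argument is unchanged, since unboundedness of $N(s\uh{n})$ still forces the perturbed freeze-trigger to fire infinitely often.
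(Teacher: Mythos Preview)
Your proposal is correct and follows the same approach as the paper: the paper's own proof simply refers to \cite[Lemma~6]{Fi2013} and remarks that the same construction works verbatim for $P$-martingales, and what you have written out is precisely that standard savings trick (split into active and saved capital, freeze when the active part doubles). Your verification that the $P$-martingale identity, the savings bound, and success are preserved is accurate; in fact you give more detail than the paper does. The one place where your write-up is somewhat loose is the last paragraph on polynomial time computability: the phrase ``a margin that still shrinks along the sequence'' is vague, and it would be cleaner to note that between freezes $A(\rho)=N(\rho)/N(\tau_i)$, so the freeze condition is really $N(\rho)\ge 2N(\tau_i)$, a comparison that can be decided from polynomial-precision approximations of $N$ once one fixes a rational threshold and uses that $N(\tau_i)$ grows at least geometrically along freeze points. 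This is exactly the issue the paper alludes to when it says the only difference from \cite{Fi2013} is that $N$ is real-valued rather than rational-valued.
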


\begin{proof}
The proof of \cite[Lemma 6]{Fi2013} basically works in this case. The only difference is that here $N$ is real-valued instead of rational-valued. This fact is irrelevant for the polynomial time bound. One can verify that the same definition of $M$ as in \cite[Lemma 6]{Fi2013} yields a polynomial time $P$-martingale.
\end{proof}

\subsection{The measure induced by $P_\beta$-martingales}\label{subsec:measure}

Recall that $p_\beta$ is the one-to-one mapping that sends each real in $[0,1)$ to its unique $\beta$-expansion, and that $\widehat P_\beta$ is the Parry measure induced on the unit interval, i.e.\ $\widehat{P}_\beta=P_\beta\circ p_\beta$.
Let $T_\beta\colon[0,1]\to[0,1)$ be the map $T_\beta(x)=\{\beta x\}$.

We derive some useful properties of the Parry measure.
\begin{theorem}\cite{Parry60} Let $\beta>1$ be a real base, then the Parry measure $P_{\beta}$ is $L(X_\beta)$-supported.
\end{theorem}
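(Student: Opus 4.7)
The plan is to exploit the identity $P_\beta = \widehat P_\beta \circ p_\beta^{-1}$ together with the two-sided Lebesgue bound of~\eqref{eqn:bound_hatP}, which reduces ``$P_\beta(\sigma)>0$'' to ``$\lambda(p_\beta^{-1}[\sigma])>0$''. The direction $\sigma\notin L(X_\beta) \Rightarrow P_\beta(\sigma)=0$ is then immediate: by Theorem~\ref{betacaract}, $p_\beta([0,1)) \subseteq X_\beta$, so $[\sigma] \cap X_\beta = \emptyset$ gives $p_\beta^{-1}[\sigma] = \emptyset$ and hence $P_\beta(\sigma) = 0$.

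For the other direction, fix $\sigma = \sigma_1 \dots \sigma_n \in L(X_\beta)$. First I would verify that $\sigma 0^\infty$ is itself a valid greedy $\beta$-expansion, i.e.\ $\sigma 0^\infty \in p_\beta([0,1))$. Pick any $s\in X_\beta$ with $s\uh n = \sigma$; for every $k$ one has $T^k(\sigma 0^\infty) \leqlex T^k s \leqlex \mathfrak{s}(\beta)$, because $0^\infty$ is lex-minimal. These inequalities are in fact strict, since $\mathfrak{s}(\beta)$ is never eventually zero: by construction it equals the non-terminating greedy expansion of $\beta$, or else $\hat\beta$ (the periodic expansion) when the greedy one terminates. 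Theorem~\ref{betacaract} then yields $\sigma 0^\infty = p_\beta(\rea\sigma)$.

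Finally I would show that $p_\beta^{-1}[\sigma]$ contains a right neighborhood of $\rea\sigma$. Along the locus where the first $k$ digits of $x$ are $\sigma_1\dots\sigma_k$, the greedy remainder is the affine function $r_k(x)=\beta^k(x-\rea{\sigma_1\dots\sigma_k})$, so that the next digit equals $\sigma_{k+1}$ precisely when $\beta^{k+1}(x-\rea{\sigma_1\dots\sigma_{k+1}}) \in [0,1)$. At $x=\rea\sigma$ this quantity equals $\rea{\sigma_{k+2}\dots\sigma_n}$, which is strictly less than $1$ by property (1) applied to the valid expansion $\sigma 0^\infty$, so the condition has positive slack. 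Taking the minimum slack over $k=0,\dots,n-1$ yields some $\delta_\sigma>0$ with $[\rea\sigma,\rea\sigma+\delta_\sigma) \subseteq p_\beta^{-1}[\sigma]$, and combining with~\eqref{eqn:bound_hatP} gives $P_\beta(\sigma)\geq k'\delta_\sigma>0$. The one subtle point in the whole argument is the bookkeeping convention ensuring that $\mathfrak{s}(\beta)$ is not eventually zero, which is exactly what promotes the lex inequalities for $\sigma 0^\infty$ to strict ones and allows Theorem~\ref{betacaract} to be invoked.
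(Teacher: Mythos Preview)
Your argument is correct. Note, however, that the paper does not actually prove this statement: it is stated with a bare citation to \cite{Parry60} and no proof is supplied. So there is no ``paper's own proof'' to compare against; you have filled in what the authors chose to quote.

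A couple of remarks on the write-up. First, the right-neighborhood step can be phrased more transparently as an intersection of half-open intervals: the condition ``the first $k$ greedy digits of $x$ are $\sigma_1\dots\sigma_k$'' is exactly
\[
\rea{\sigma_1\dots\sigma_k}\ \le\ x\ <\ \rea{\sigma_1\dots\sigma_k}+\beta^{-k},
\]
and intersecting these for $k=1,\dots,n$ gives $p_\beta^{-1}[\sigma]$. Each of these intervals contains $\rea\sigma$ in its interior-from-the-right (the upper bound being strict is precisely property~(1) for $\sigma 0^\infty$), so the finite intersection does too. This is what your ``slack'' computation is doing, but stating it this way makes the inductive dependence (``along the locus where the first $k$ digits are \dots'') disappear.

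Second, your justification that $\mathfrak{s}(\beta)$ is never eventually $0$ is the right hinge of the argument and is worth one more sentence: in the finite case the periodic block of $\hat\beta$ begins with $\lfloor\beta\rfloor\geq 1$, so eventual vanishing is impossible. With that in hand, the strict lex inequalities $T^k(\sigma 0^\infty)\ltlex\mathfrak{s}(\beta)$ follow cleanly, since equality would force $\mathfrak{s}(\beta)$ to end in $0^\infty$.
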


We will use
\begin{equation*}
\pfl=\lambda\circ p_\beta^{-1}
\end{equation*}
for the push-forward of the Lebesgue measure on the $\beta$-shift. Of course, inequality~\eqref{eqn:bound_hatP} translates to
\begin{equation}\label{eq:bound_P}
k'\cdot \pfl(\sigma)\leq P_\beta(\sigma)\leq k\cdot \pfl(\sigma)
\end{equation}
for any $\sigma\in L(X_\beta)$.
Let us say that $x\in[0,1]$ is $\beta$-adic if $x$ has a finite $\beta$-expansion. Clearly, $\beta$-adic numbers correspond bijectively to words in $L(X_\beta)$ not ending in 0. For instance, if $\beta$ is 2.5 we have that both $2/5$ and $24/25$ are $\beta$-adic numbers, since their fractional $\beta$-expansions are $p_\beta(2/5)=10^\infty$ and $p_\beta(24/25)=210^\infty$.

We will write $I_\sigma$ for the interval of real numbers in $(0,1)$ whose fractional $\beta$-expansion begins with $\sigma$. Observe that if $\sigma\notin L(X_\beta)$ then $I_\sigma=\emptyset$.

Since we will be working with the Parry measure and with $\beta$-expansions, and since the Parry measure has a closed expression in terms of Lebesgue measure, it will be helpful to know what kind of values $\lambda(I_\sigma)$ can take, given that in the non-integer case it is no longer true that $\lambda(I_\sigma)= \beta^{-|\sigma|}$.

For this purpose we introduce some new notation. For $\sigma\in L(X_\beta)$, write
\begin{align*}
{\sf Suc}_1(\sigma)&=\{b\in\Sigma_\beta\mid\ \sigma b\in L(X_\beta)\},
\\
\overline{\sigma}^+&=\max{\sf Suc}_1(\sigma),
\\
\text{next}(\sigma)&=\min_{\leq_{\text{lex}}}\{\tau\in L(X_\beta)\colon \sigma<_{\rm lex} \tau\},
\\
\mathcal{L}&=\{\sigma b\in L(X_\beta)\colon b\neq\overline{\sigma}^+\}.
\end{align*}
Notice that, because of Theorem~\ref{betacaract},  ${\sf Suc}_1(\sigma)$ has the form $\{1,\dots,r\}$ for some $r= \overline{\sigma}^+\leq\lceil\beta\rceil-1$. Also, given any $b\in\Sigma$ we have that $\sigma b\in \mathcal{L}$ if and only if some suffix of $\sigma$ is a prefix of $\mathfrak{s}(\beta)$.
%

Let us make a remark concerning $\overline{s\uh{i}}$ for some $\beta$-expansion~$s$.
When $\beta$ is an integer and $x\in(0,1)$, if $I_\beta^n(x)=[a,b)$ denotes the $\beta$-adic half-open interval of measure $\beta^{-n}$ that $x$ lies in,  then the sequence $(I_\beta^n(x))_{n\in\NN}$ cannot eventually consist of the rightmost $\beta$-adic subinterval of the previous $\beta$-adic interval. In terms of its $\beta$-expansion, it cannot eventually consist of an infinite sequence of $\beta-1$, since the rules for the construction of $\beta$-expansions mandate that $\dots a(\beta-1)^\infty\ (a<\beta -1)$ be written $\dots(a+1)0^\infty$. The same observation is true for non-integer bases $\beta$, when the symbol identifying the rightmost $\beta$-adic subinterval of a $\beta$-adic interval is not necessarily $\lfloor\beta\rfloor-1$. In this case, $\overline{\sigma}^+$ is used to identify the rightmost $\beta$-adic subinterval of $I_\sigma$, i.e.\ $I_{\sigma\overline{\sigma}^+}$, and our observation takes the following form.
\begin{lemma}\label{lemaexp}Let $s$ be the fractional $\beta$-expansion of some real $x\in[0,1)$ (that is, $s\in p_\beta([0,1))$). Then for any natural number $n$, there is $i>n$ such that
$s_{i+1}\neq \overline{s\uh{i}}^+$.
\end{lemma}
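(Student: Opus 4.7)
The proof will proceed by contradiction. Suppose there is an $n$ such that $s_{i+1}=\overline{s\uh{i}}^+$ for every $i>n$. The plan is to examine the cylinder sets $I_\sigma=\{y\in[0,1):\sigma\prec p_\beta(y)\}$ and derive an impossible description of their nested intersection. Intuitively, always choosing the largest admissible next digit means $x$ stays in the rightmost sub-interval of every cylinder we visit, so the right endpoints of the $I_{s\uh{i}}$ stabilize; but the intersection must equal $\{x\}$, which a half-open interval of positive length cannot be.

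First, I would record two structural facts about $I_\sigma$ for $\sigma\in L(X_\beta)$. (a) Each $I_\sigma$ is a half-open interval of the form $[\rea{\sigma},\sup I_\sigma)$. The main ingredient is the standard observation---easily verified from condition~1 in the definition of the $\beta$-expansion---that lexicographic order on $\beta$-expansions agrees with numerical order on the represented reals; this forces $I_\sigma$ to be convex, and a direct check (using that $\sigma 0^\infty\in p_\beta([0,1))$, so that $\rea{\sigma}\in I_\sigma$) identifies the endpoints. (b) The partition $I_\sigma=\bigsqcup_{b\in{\sf Suc}_1(\sigma)} I_{\sigma b}$ arranges the sub-intervals in numerical order by $b$, so $I_{\sigma\overline{\sigma}^+}$ is the rightmost sub-interval, whence $\sup I_{\sigma\overline{\sigma}^+}=\sup I_\sigma$.

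Under the hypothesis, repeated application of (b) gives $\sup I_{s\uh{i}}=\sup I_{s\uh{n+1}}=:b$ for every $i\geq n+1$, so by (a) we have $I_{s\uh{i}}=[\rea{s\uh{i}},b)$ for each such $i$. Since the partial sums $\rea{s\uh{i}}$ converge to $x=\rea{s}$ and $x<b$ (as $x\in I_{s\uh{n+1}}$), the nested intersection is
\[
\bigcap_{i\geq n+1} I_{s\uh{i}}=[x,b),
\]
a non-degenerate half-open interval. On the other hand, injectivity of $p_\beta$ forces this same intersection to equal $\{y:p_\beta(y)=s\}=\{x\}$, a singleton---impossible. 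The contradiction proves the lemma.

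The only step that requires a little care is the structural description of $I_\sigma$ in (a); everything else is elementary manipulation of nested intervals and convergent partial sums.
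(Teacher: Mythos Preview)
Your proof is correct. The paper does not supply a formal proof of this lemma; it only precedes the statement with an informal remark explaining that, in the integer-base case, eventually always picking the maximal digit violates the expansion rule (condition~1, the strict tail inequality $\sum_{i>n}s_i\beta^{-i}<\beta^{-n}$), and then asserts that the same observation holds for non-integer bases. Your argument is a faithful formalization of exactly this geometric intuition: the cylinder intervals $I_{s\uh{i}}$ shrink to $\{x\}$ by injectivity of $p_\beta$, yet under the contradictory hypothesis their right endpoints freeze while the left endpoints increase to $x$, leaving a non-degenerate interval $[x,b)$. The two structural facts (a) and (b) you isolate are precisely what the paper's remark takes for granted when it speaks of ``the rightmost $\beta$-adic subinterval''.
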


\begin{observation}\label{obse1}
$\pfl(\sigma)=\rea{\text{next}(\sigma)}-\rea{\sigma}$.
\end{observation}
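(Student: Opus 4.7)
The plan is to identify the set $I_\sigma=p_\beta^{-1}([\sigma])$ with the half-open interval $[\rea\sigma,\rea{\text{next}(\sigma)})$, from which the claim follows immediately by taking Lebesgue measure.

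First I would unpack the left-hand side: by the definitions of $\pfl$ and of the cylinder $[\sigma]$, $\pfl(\sigma)=\lambda(p_\beta^{-1}([\sigma]))=\lambda(I_\sigma)$, so the identity to prove is $\lambda(I_\sigma)=\rea{\text{next}(\sigma)}-\rea\sigma$. I would then use the greedy recursion that defines $p_\beta$ (with $s^\beta_{n+1}=\lfloor\beta r_n\rfloor$ and $r_{n+1}=\{\beta r_n\}$) to show that $p_\beta\colon[0,1)\to X_\beta$ is monotone non-decreasing with respect to $\leqlex$: a simple induction on the digit index shows that $x<y$ forces $p_\beta(x)\leqlex p_\beta(y)$. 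Consequently $I_\sigma$, being the $p_\beta$-preimage of the ``lex-interval'' of sequences beginning with $\sigma$, is itself an interval in $[0,1)$.

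The left endpoint is $\rea\sigma$: by the definition of $\rea\cdot$, the real $\rea\sigma$ has $\beta$-expansion $\sigma 0^\infty$ (assuming $\sigma\in L(X_\beta)$; if not, both sides of the claim vanish), hence $\rea\sigma\in I_\sigma$, and monotonicity of $p_\beta$ turns this into $\rea\sigma=\min I_\sigma$. For the right endpoint, I would invoke the defining property $\text{next}(\sigma)=\min_{\leqlex}\{\tau\in L(X_\beta)\colon\sigma<_{\rm lex}\tau\}$, which says that $\text{next}(\sigma)\cdot 0^\infty$ is the $\leqlex$-infimum of all $\beta$-expansions of reals strictly greater than $\rea\sigma$ that do not begin with $\sigma$. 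Monotonicity then yields two matching implications: for any $x$ with $\rea\sigma\leq x<\rea{\text{next}(\sigma)}$, $p_\beta(x)$ is lex-sandwiched between $\sigma 0^\infty$ and $\text{next}(\sigma)\cdot 0^\infty$, which forces $p_\beta(x)$ to begin with $\sigma$; conversely, for $x\geq\rea{\text{next}(\sigma)}$, $p_\beta(x)\geq_{\rm lex}\text{next}(\sigma)\cdot 0^\infty$ and so cannot begin with $\sigma$. Together these give $I_\sigma=[\rea\sigma,\rea{\text{next}(\sigma)})$.

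The step I expect to be most delicate is the right endpoint in the edge case $\sigma\notin\mathcal{L}$, that is, when $\sigma$ already ends in the largest symbol admissible after its prefix. In that regime $\text{next}(\sigma)$ cannot be obtained by incrementing the last digit (the resulting word falls outside $L(X_\beta)$) and must instead be produced by carrying back to some earlier position. The concrete worry is that $\sup I_\sigma$ could be attained inside $I_\sigma$ by some real whose $\beta$-expansion is $\sigma$ followed by an eventually-maximal tail, invalidating the half-open structure. This is precisely what Lemma~\ref{lemaexp} rules out: no element of $p_\beta([0,1))$ is eventually at the admissible maximum, so $\sup I_\sigma$ is approached but never attained, the interval stays open on the right, and the supremum matches the value $\rea{\text{next}(\sigma)}$ produced by the carry. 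Assembling the three steps, $\lambda(I_\sigma)=\rea{\text{next}(\sigma)}-\rea\sigma$, which is the claimed identity.
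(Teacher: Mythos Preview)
The paper states this as an Observation without proof; it is treated as immediate from the definitions of $\pfl$, $p_\beta$, $I_\sigma$, and $\text{next}$. Your argument is correct and supplies exactly the justification the paper omits: you identify $I_\sigma$ with the half-open interval $[\rea{\sigma},\rea{\text{next}(\sigma)})$ via the order-preserving property of the greedy map $p_\beta$, and you invoke Lemma~\ref{lemaexp} to ensure the right endpoint is not attained. This is the natural (and essentially only) way to see the identity, so there is no meaningful methodological difference to discuss.

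One small slip worth flagging: your parenthetical remark that ``if $\sigma\notin L(X_\beta)$, both sides of the claim vanish'' is not quite right. The left-hand side $\pfl(\sigma)$ does vanish, but $\rea{\text{next}(\sigma)}-\rea{\sigma}$ need not (and indeed $\text{next}(\sigma)$ may fail to exist). This is harmless, since the paper works throughout under the standing assumption $\sigma\in L(X_\beta)$, but you should drop the parenthetical rather than assert something false.
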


\begin{lemma}\label{lema59}
Let $\sigma=\sigma' b\in \mathcal{L}$.
Then $\pfl(\sigma)=\beta^{-|\sigma|}$.
\end{lemma}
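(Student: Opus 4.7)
The plan is to invoke Observation~\ref{obse1} with $\text{next}(\sigma'b) = \sigma'(b+1)$, so that the explicit formula for $\rea{\cdot}$ immediately yields $\pfl(\sigma) = \rea{\sigma'(b+1)} - \rea{\sigma'b} = ((b+1)-b)\beta^{-|\sigma|} = \beta^{-|\sigma|}$, which is exactly the claim.

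I would first verify that $\sigma'(b+1) \in L(X_\beta)$ by showing that ${\sf Suc}_1(\sigma')$ is downward closed, hence equal to $\{0,1,\ldots,\overline{\sigma'}^+\}$. Concretely, if $c \in {\sf Suc}_1(\sigma')$ is witnessed by some $\sigma'cs'' \in X_\beta$ and $c' < c$, then each shift $T^n(\sigma'c'0^\infty)$ with $n \leq |\sigma'|$ is lexicographically dominated by the corresponding $T^n(\sigma'cs'')$ (replacing the symbol $c$ by the smaller $c'$ can only decrease the shifted sequence), while each later shift equals $0^\infty$; hence $\sigma'c'0^\infty$ meets the constraint of Theorem~\ref{betacaract} and $\sigma'c' \in L(X_\beta)$. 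Since $\sigma = \sigma'b \in L(X_\beta)$ gives $b \in {\sf Suc}_1(\sigma')$ and the hypothesis $b \neq \overline{\sigma'}^+$ forces $b < \overline{\sigma'}^+$, we get $b+1 \in {\sf Suc}_1(\sigma')$ and thus $\sigma'(b+1) \in L(X_\beta)$.

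Second, to identify $\text{next}(\sigma'b) = \sigma'(b+1)$, I would rule out any competing word $\tau \in L(X_\beta)$ of length $|\sigma|$ satisfying $\sigma'b \ltlex \tau \ltlex \sigma'(b+1)$: a first disagreement between $\tau$ and $\sigma'$ at some position $j < |\sigma'|$ would force either $\tau \ltlex \sigma'b$ (if $\tau_j < \sigma'_j$) or $\tau >_{\rm lex} \sigma'(b+1)$ (if $\tau_j > \sigma'_j$), contradicting the strict inequalities. Hence $\tau$ must agree with $\sigma'$ on positions $0,\ldots,|\sigma'|-1$, and then at position $|\sigma'|$ its symbol would need to lie strictly between $b$ and $b+1$, which is impossible. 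The main subtlety throughout is the treatment of the lex order on words of possibly distinct lengths; the argument above is robust once one fixes the interpretation of $\text{next}$ for which Observation~\ref{obse1} is valid.
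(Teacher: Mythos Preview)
Your proposal is correct and follows exactly the same route as the paper's proof: show $\sigma'(b+1)\in L(X_\beta)$ from $b\neq\overline{\sigma'}^+$, conclude $\text{next}(\sigma'b)=\sigma'(b+1)$, and apply Observation~\ref{obse1} to get $\pfl(\sigma)=\rea{\sigma'(b+1)}-\rea{\sigma'b}=\beta^{-|\sigma|}$. You simply spell out in more detail the two steps the paper asserts (the downward closure of ${\sf Suc}_1(\sigma')$ and the identification of $\text{next}$), and you rightly flag the ambiguity in the lex order across different word lengths that the paper glosses over.
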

\begin{proof}Since $b\neq \overline{\sigma}'^+$ we have that $\sigma' (b+1)\in L(X_\beta)$, so that $\text{next}(\sigma)=\sigma' (b+1)$ and by Observation~\ref{obse1}, $\lambda (I_\sigma)=\pfl(\sigma)=\rea{\sigma' (b+1)}-\rea{\sigma' b}=\beta^{-(|\sigma'|+1)}$
\end{proof}

\begin{observation}\label{obse2}Let $\tau c\in \mathcal{L}$ and $\alpha$ be some prefix of $\mathfrak{s}(\beta)$. Then $\text{next}(\tau c\alpha)=\tau (c+1)$
\end{observation}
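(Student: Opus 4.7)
I plan to show $\text{next}(\tau c\alpha)=\tau(c+1)$ by verifying that $\tau(c+1)\in L(X_\beta)$, that $\tau c\alpha\ltlex\tau(c+1)$, and that no element of $L(X_\beta)$ sits strictly between them in lexicographic order.

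The first two facts are immediate. Since $\tau c\in\mathcal{L}$ gives $c<\overline{\tau}^+$, we have $c+1\in{\sf Suc}_1(\tau)$, hence $\tau(c+1)\in L(X_\beta)$. The strict inequality $\tau c\alpha\ltlex\tau(c+1)$ holds because the two words agree on positions $1,\dots,|\tau|$ and then differ at position $|\tau|+1$ with symbols $c$ and $c+1$.

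The main work is in the third part. Assume, for contradiction, that some $\gamma\in L(X_\beta)$ satisfies $\tau c\alpha\ltlex\gamma\ltlex\tau(c+1)$. A case analysis on the first positions where $\gamma$ disagrees with $\tau c\alpha$ and with $\tau(c+1)$ forces $\gamma$ to begin with $\tau c$: any departure from $\tau$ at an earlier position would either place $\gamma$ below $\tau c\alpha$ or above $\tau(c+1)$, and the symbol of $\gamma$ at position $|\tau|+1$ is pinned to $c$ (the value $c+1$ would match or exceed $\tau(c+1)$, and a smaller value would bring $\gamma$ below $\tau c\alpha$). Writing $\gamma=\tau c\gamma'$, the relation $\tau c\alpha\ltlex\gamma$ becomes $\alpha\ltlex\gamma'$, so there is a first position $j\leq|\alpha|$ with $\gamma'_j>\alpha_j$.

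Finally, since $\alpha$ is a prefix of $\mathfrak{s}(\beta)$ we have $\alpha_j=\mathfrak{s}(\beta)_j$, so $\gamma'_j>\mathfrak{s}(\beta)_j$ while $\gamma'$ agrees with $\mathfrak{s}(\beta)$ on positions $1,\dots,j-1$; this yields $\mathfrak{s}(\beta)\ltlex\gamma' 0^\infty$. But $\gamma\in L(X_\beta)$ means $\gamma 0^\infty\in X_\beta$, and shifting by $|\tau c|$ gives $\gamma' 0^\infty$, which by the definition of $X_\beta$ must satisfy $\gamma' 0^\infty\leqlex\mathfrak{s}(\beta)$, a contradiction. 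The main obstacle I foresee is the case analysis reducing to $\gamma=\tau c\gamma'$: it requires careful bookkeeping of first-differing positions against the two targets $\tau c\alpha$ and $\tau(c+1)$, and attention to edge cases such as $\alpha=\emptyset$ or $|\gamma|<|\tau c|$. Once this reduction is in place, the prefix property of $\alpha$ inside $\mathfrak{s}(\beta)$, combined with Bertrand's characterization of $X_\beta$, closes the argument immediately.
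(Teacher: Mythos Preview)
The paper states this as an Observation without proof, so your argument already goes beyond what the paper offers. Your overall strategy---verify $\tau(c+1)\in L(X_\beta)$, verify $\tau c\alpha\ltlex\tau(c+1)$, then rule out any $\gamma\in L(X_\beta)$ strictly in between---is exactly right, and the final step using the shift characterization of $X_\beta$ is clean.

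There is, however, one genuine gap. When you write ``$\alpha\ltlex\gamma'$, so there is a first position $j\leq|\alpha|$ with $\gamma'_j>\alpha_j$'', you are implicitly excluding the case that $\gamma'$ is a proper extension of $\alpha$, i.e.\ $\gamma=\tau c\alpha\delta$ with $\delta$ not all zeros. Such words do exist in $L(X_\beta)$ (for instance $\delta=0^k1$ for $k$ large enough that $\mathfrak{s}(\beta)$ has no block of $k$ consecutive zeros), and under any convention in which a word is $\ltlex$ a proper extension of itself, they sit strictly between $\tau c\alpha$ and $\tau(c+1)$.

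This is really an artefact of the paper's somewhat informal definition of $\text{next}$: read literally, the minimum $\min_{\leqlex}\{\tau\in L(X_\beta):\sigma\ltlex\tau\}$ need not exist, and in any case is not $\tau(c+1)$, precisely because of these extensions. The intended meaning---forced by Observation~\ref{obse1}, where $\rea{\text{next}(\sigma)}$ must be the right endpoint of $I_\sigma$---is that one compares words via $\rea{\cdot}$ and restricts competitors to length at most $|\sigma|$ (equivalently: $\text{next}(\sigma)$ names the right endpoint of $I_\sigma$). Under that reading, $|\gamma'|\leq|\alpha|$, the prefix case is impossible, your first-differing-position claim is valid, and your argument closes without change. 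You should make this length restriction explicit.
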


Define
$
N_\sigma=\min\{n\colon \sigma_{n+1}\dots \sigma_{|\sigma|} = \mathfrak{s}(\beta)_1\dots\mathfrak{s}(\beta)_{|\sigma|-n}\}\cup\{|\sigma|\}.
$
\begin{observation}\label{obse3}
Let $\sigma\in L(X_\beta)$, $N_\sigma >1$ and $\tau c=\sigma\uh{N_\sigma}$, then $c\neq \overline{\tau}^+$.
\end{observation}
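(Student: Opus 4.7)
The plan is to argue by contradiction: assuming $c=\overline{\tau}^+$, I will exhibit an index strictly smaller than $N_\sigma$ which also lies in the set defining $N_\sigma$, violating its minimality. Let $\hat N$ be the smallest element of $\{0,1,\dots,|\tau|\}$ for which the suffix $\tau_{\hat N+1}\dots\tau_{|\tau|}$ equals the prefix $\mathfrak{s}(\beta)_1\dots\mathfrak{s}(\beta)_{|\tau|-\hat N}$ of $\mathfrak{s}(\beta)$ (the empty word trivially matches, so $\hat N\leq|\tau|=N_\sigma-1$). The lexicographic constraints $T^ns\leqlex\mathfrak{s}(\beta)$ defining $X_\beta$, together with the characteristic property $T^n\mathfrak{s}(\beta)\leqlex\mathfrak{s}(\beta)$, imply that the tightest upper bound on the symbol following $\tau$ in $L(X_\beta)$ comes from the longest suffix match, giving $\overline{\tau}^+=\mathfrak{s}(\beta)_{|\tau|-\hat N+1}$.

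Combining this identity with $\tau_{\hat N+1}\dots\tau_{|\tau|}=\mathfrak{s}(\beta)_1\dots\mathfrak{s}(\beta)_{|\tau|-\hat N}$ and the hypothesis $c=\overline{\tau}^+$, I obtain $\sigma_{\hat N+1}\dots\sigma_{N_\sigma}=\mathfrak{s}(\beta)_1\dots\mathfrak{s}(\beta)_{N_\sigma-\hat N}$. Concatenating with $\sigma_{N_\sigma+1}\dots\sigma_{|\sigma|}=\mathfrak{s}(\beta)_1\dots\mathfrak{s}(\beta)_{|\sigma|-N_\sigma}$ from the definition of $N_\sigma$, the word $\sigma_{\hat N+1}\dots\sigma_{|\sigma|}$ is the concatenation of two prefixes of $\mathfrak{s}(\beta)$. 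To conclude that $\hat N$ belongs to the set defining $N_\sigma$ — which, since $\hat N<N_\sigma$, would contradict its minimality — it remains to show that this concatenation equals $\mathfrak{s}(\beta)_1\dots\mathfrak{s}(\beta)_{|\sigma|-\hat N}$; equivalently, that $T^{N_\sigma-\hat N}\mathfrak{s}(\beta)$ agrees with $\mathfrak{s}(\beta)$ on the first $|\sigma|-N_\sigma$ symbols.

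The main obstacle, and the crux of the proof, is this last agreement, which I would obtain via a sandwich argument. Since $\sigma\in L(X_\beta)$, there is $s\in X_\beta$ with $\sigma\prec s$, whence $T^{\hat N}s\leqlex\mathfrak{s}(\beta)$. As the first $N_\sigma-\hat N$ symbols of $T^{\hat N}s$ already coincide with those of $\mathfrak{s}(\beta)$, this reduces to the statement that the tail of $T^{\hat N}s$, which begins with $\mathfrak{s}(\beta)_1\dots\mathfrak{s}(\beta)_{|\sigma|-N_\sigma}$, is $\leqlex T^{N_\sigma-\hat N}\mathfrak{s}(\beta)$. On the other hand, the characteristic property of $\mathfrak{s}(\beta)$ yields $T^{N_\sigma-\hat N}\mathfrak{s}(\beta)\leqlex\mathfrak{s}(\beta)$. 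If these two sequences disagreed at some position $i\leq|\sigma|-N_\sigma$, then taking $i$ minimal the first inequality would force $\mathfrak{s}(\beta)_i<(T^{N_\sigma-\hat N}\mathfrak{s}(\beta))_i$ while the second would force the reverse strict inequality, a contradiction. Hence the two sequences agree on the first $|\sigma|-N_\sigma$ positions, which completes the argument.
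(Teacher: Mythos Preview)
The paper states this result as an \emph{Observation} and gives no proof at all, presumably regarding it as a routine consequence of the definitions. Your argument is correct and is exactly the natural way to unpack why the statement holds: the key facts are (i) that $\overline{\tau}^+=\mathfrak{s}(\beta)_{|\tau|-\hat N+1}$ for $\hat N$ the index of the longest suffix of $\tau$ matching a prefix of $\mathfrak{s}(\beta)$ (which follows from the characteristic inequality $T^k\mathfrak{s}(\beta)\leqlex\mathfrak{s}(\beta)$), and (ii) the lexicographic sandwich $T^{N_\sigma}s\leqlex T^{N_\sigma-\hat N}\mathfrak{s}(\beta)\leqlex\mathfrak{s}(\beta)$ forcing the middle sequence to share the first $|\sigma|-N_\sigma$ symbols of $\mathfrak{s}(\beta)$. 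Both steps are sound, and together they produce $\hat N<N_\sigma$ in the defining set, contradicting minimality. There is nothing to compare your route against; you have simply supplied the details the authors left implicit.
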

The following lemma extends Lemma~\ref{lema59} when $\beta$ is Pisot, in the sense that it completes the characterization of the values that $\pfl(\sigma)$ may take.
\begin{lemma}\label{lemapulenta}
Let $\beta$ be a Pisot real, let $\sigma\in L(X_\beta)\setminus \mathcal{L}$, and suppose $\mathfrak{s}(\beta)=r_1\dots r_m(a_1\dots a_n)^\infty$ and $\phi_j=\rea{a_1\dots a_j}$ for $j\leq n$. Let $\tau c=\sigma\uh{N_\sigma}$. Then, $\pfl(\sigma)=\pfl(\tau c)\pfl(r_1\dots r_l)$ in case $\sigma=\tau c r_1\dots r_l$, $l\leq m$), or $\pfl(\sigma)=\pfl(\tau c)\pfl(r_1\dots r_m a_1\dots a_k)\beta^{-ln}$ in case $\sigma=\tau c r_1\dots r_l(a_1\dots a_n)^la_1\dots a_k$, $k\leq n$, $0\leq l$.
%
%
\end{lemma}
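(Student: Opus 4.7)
The plan is to decompose $\sigma=\tau c\eta$, where $\eta=\sigma_{N_\sigma+1}\dots\sigma_{|\sigma|}$ is, by the very definition of $N_\sigma$, precisely the prefix of $\mathfrak{s}(\beta)$ of length $|\sigma|-N_\sigma$. Observation~\ref{obse3} guarantees that $\tau c\in\mathcal{L}$, so Lemma~\ref{lema59} applies to give $\pfl(\tau c)=\beta^{-|\tau c|}$, and Observation~\ref{obse2} (with $\alpha=\eta$) gives $\text{next}(\sigma)=\tau(c+1)$. Substituting these into Observation~\ref{obse1} and expanding $\rea{\tau(c+1)}=\rea{\tau c}+\beta^{-|\tau c|}$ together with $\rea{\tau c\eta}=\rea{\tau c}+\beta^{-|\tau c|}\rea{\eta}$ yields
\[
\pfl(\sigma)=\rea{\tau(c+1)}-\rea{\tau c\eta}=\beta^{-|\tau c|}\bigl(1-\rea{\eta}\bigr)=\pfl(\tau c)\bigl(1-\rea{\eta}\bigr).
\]

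The remaining task is to identify $1-\rea{\eta}$ with $\pfl$ of the appropriate prefix of $\mathfrak{s}(\beta)$. For this I would first establish, as an auxiliary fact, that for every prefix $\alpha$ of $\mathfrak{s}(\beta)$ one has $I_\alpha=[\rea{\alpha},1)$, whence $\pfl(\alpha)=1-\rea{\alpha}$. This follows from Theorem~\ref{betacaract} together with the extension $p_\beta(1)=\mathfrak{s}(\beta)$ by continuity: reals approaching $1$ from below have $p_\beta$-images agreeing with $\mathfrak{s}(\beta)$ on arbitrarily long prefixes, and the greedy algorithm together with $\rea{\alpha}\le x<1$ pins down the first $|\alpha|$ digits of $p_\beta(x)$ as $\alpha$. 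In particular, using the identity $1=\sum_{j\geq 1}\mathfrak{s}(\beta)_j\beta^{-j}$, one rewrites $1-\rea{\eta}=\sum_{j>|\eta|}\mathfrak{s}(\beta)_j\beta^{-j}$.

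In Case A, $\eta=r_1\dots r_l$ with $l\leq m$ is itself such a prefix, and the identification is immediate: $1-\rea{\eta}=\pfl(r_1\dots r_l)$, giving $\pfl(\sigma)=\pfl(\tau c)\pfl(r_1\dots r_l)$. In Case B, $\eta=r_1\dots r_m(a_1\dots a_n)^l a_1\dots a_k$, and here the Pisot hypothesis enters through Theorem~\ref{pisotsofic}: $\mathfrak{s}(\beta)$ is eventually periodic with period $n$ from position $m+1$ onward, so $\mathfrak{s}(\beta)_{j+ln}=\mathfrak{s}(\beta)_j$ for all $j>m$. The substitution $j=j'+ln$ in the tail sum then gives
\[
1-\rea{\eta}=\sum_{j>m+ln+k}\mathfrak{s}(\beta)_j\beta^{-j}=\beta^{-ln}\sum_{j'>m+k}\mathfrak{s}(\beta)_{j'}\beta^{-j'}=\beta^{-ln}\pfl(r_1\dots r_m a_1\dots a_k),
\]
which yields the Case B formula. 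The main technical point is the verification that $I_\alpha=[\rea{\alpha},1)$ for prefixes $\alpha$ of $\mathfrak{s}(\beta)$; once this is in place the rest is bookkeeping, and the Pisot assumption is invoked only at the final step to license the periodic index-shift.
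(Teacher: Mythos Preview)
Your proof is correct and follows essentially the same route as the paper's: both use Observations~\ref{obse1}, \ref{obse2}, \ref{obse3} and Lemma~\ref{lema59} to reduce to $\pfl(\sigma)=\pfl(\tau c)\bigl(1-\rea{\eta}\bigr)$ with $\eta$ a prefix of $\mathfrak{s}(\beta)$, and then identify $1-\rea{\eta}$ with $\pfl(\eta)$. Your packaging of Case~B via the tail-sum identity $1-\rea{\eta}=\sum_{j>|\eta|}\mathfrak{s}(\beta)_j\beta^{-j}$ and a periodic index shift is a bit cleaner than the paper's explicit geometric-series algebra with the identity $1=\psi_m+\beta^{-m}\phi_n/(1-\beta^{-n})$, but the two computations are the same in substance.
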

\begin{proof}Since $\sigma\notin\mathcal{L}$ some suffix of $\sigma$ is a prefix of $\mathfrak{s}(\beta)$, which means either $\sigma=\tau cr_1\dots r_l$, for some $l\leq m$, or $\sigma=\tau c r_1\dots r_m(a_1\dots a_n)^la_1\dots a_k$, for some $l\geq 0$ and $k\leq n$. Write $\psi_i=\rea{r_1\dots r_i}$. One then has
\begin{equation}\label{unopisot}
1=\rea{\mathfrak{s}(\beta)}=\psi_m+\frac{1}{\beta^{m}}\phi_n\frac{1}{1-\beta^{-n}}.
\end{equation}
In case $\sigma=\tau c r_1\dots r_l$, we have $\rea{\sigma}=\rea{\tau c}+\beta^{-(|\tau|+1)}\psi_l$, and by Observations~\ref{obse1} and~\ref{obse2} we have $\pfl(\sigma)=\rea{\tau (c+1)}-\rea{\sigma}$, so that
$$
\frac{\pfl(\sigma)}{\pfl(r_1\dots r_l)}=\frac{\rea{\tau (c+1)}-\rea{\tau c}-\beta^{-(|\tau|+1)}\psi_l}{1-\psi_l}=\frac{\pfl(\tau c)-\pfl(\tau c)\psi_l}{1-\psi_l}=\pfl(\tau c),
$$
where we have used that $\pfl(\tau c)=\beta^{-(|\tau|+1)}$ (which follows from Observation~\ref{obse3}).

For the case when $\sigma=\tau c r_1\dots r_m(a_1\dots a_n)^la_1\dots a_k$, we have
\begin{align*}
\rea{\sigma}&=\rea{\tau c}+\beta^{-(|\tau|+1)}[\psi_m+\beta^{-m}(\phi_n(1+\beta^{-1}+\dots+\beta^{-(l-1)n})+\beta^{-ln}\phi_k)]\nonumber\\
&=\rea{\tau c}+\beta^{-(|\tau|+1)}\left[\psi_m+\beta^{-m}\left(\phi_n\frac{(\beta^{-ln}-1)}{\beta^{-n}-1}+\beta^{-ln}\phi_k\right)\right],
\end{align*}
and from~\eqref{unopisot} and Observations~\ref{obse1} and~\ref{obse2}
$$
\pfl(r_1\dots r_m a_1\dots a_k)=1-(\psi_m+\beta^{-m}\phi_k)=\beta^{-m}\left[\frac{\phi_n}{1-\beta^{-n}}-\phi_k\right]
$$
so that (using Observations~\ref{obse1} and~\ref{obse2} again)
\begin{align*}
\pfl(\sigma)&=\rea{\tau (c+1)}-\rea{\sigma} \nonumber \\
&=\pfl(\tau c)-\beta^{-(|\tau|+1)}\left[\psi_m+\beta^{-m}\left(\phi_n\frac{(\beta^{-ln}-1)}{\beta^{-n}-1}+\beta^{-ln}\phi_k\right)\right]\nonumber\\
&=\pfl(\tau c)\left[1-(\psi_m+\beta^{-m}\phi_k)-\beta^{-m}(\beta^{-ln}-1)\left(\phi_k-\frac{\phi_n}{1-\beta^{-n}}\right)\right]\nonumber\\
&=\pfl(\tau c)\pfl(r_1\dots r_m a_1\dots a_k)\beta^{-ln}.
\end{align*}
This concludes the proof.
\end{proof}

\begin{corollary}\label{coroconstantes}There exist positive constants $d$ and $d'$ such that
$
d\leq \pfl(\sigma b\mid \sigma)\leq d'
$
for any $\sigma b$ such that $\pfl(\sigma b)>0$.
\end{corollary}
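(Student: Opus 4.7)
My plan is a finite case analysis of the ratio $\pfl(\sigma b\mid\sigma)=\pfl(\sigma b)/\pfl(\sigma)$ using the closed-form expressions for $\pfl$ supplied by Lemmas~\ref{lema59} and~\ref{lemapulenta}. Because $\beta$ is Pisot, Theorem~\ref{pisotsofic} guarantees that $\mathfrak{s}(\beta)=r_1\ldots r_m(a_1\ldots a_n)^\infty$ is eventually periodic, so the collection
\[
F=\{\pfl(r_1\ldots r_l):1\leq l\leq m\}\cup\{\pfl(r_1\ldots r_m a_1\ldots a_k):0\leq k\leq n\}
\]
is a finite set of strictly positive reals. I will show that every value taken by the ratio lies in a finite set $S$ of positive reals built from $F$, from quotients of elements of $F$ and from a bounded collection of powers $\beta^{-j}$, after which $d=\min S$ and $d'=\max S$ do the job.

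I would split on whether $\sigma$ and $\sigma b$ lie in $\mathcal{L}$. If both do, Lemma~\ref{lema59} gives $\pfl(\sigma)=\beta^{-|\sigma|}$ and $\pfl(\sigma b)=\beta^{-|\sigma|-1}$, so the ratio equals $1/\beta$. Whenever $\sigma\notin\mathcal{L}$, Lemma~\ref{lemapulenta} applies with $\tau c=\sigma\uh{N_\sigma}$, and Observation~\ref{obse3} ensures $\tau c\in\mathcal{L}$, hence $\pfl(\tau c)=\beta^{-|\tau c|}$. If $\sigma b\in\mathcal{L}$, the length identity $|\sigma|=|\tau c|+l$ in the preperiodic subcase or $|\sigma|=|\tau c|+m+ln+k$ in the periodic subcase telescopes against $\pfl(\sigma b)=\beta^{-|\sigma|-1}$: the $\beta^{-|\tau c|}$ prefactors cancel and, crucially, the $\beta^{-ln}$ factor carrying the unbounded parameter $l$ cancels exactly. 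The ratio simplifies to $\beta^{-l-1}/\pfl(r_1\ldots r_l)$ or $\beta^{-m-k-1}/\pfl(r_1\ldots r_m a_1\ldots a_k)$, which depends only on the bounded parameters $l\in\{1,\dots,m\}$ or $k\in\{0,\dots,n\}$ and lives in $S$. If instead $\sigma b\notin\mathcal{L}$, then $b=\overline{\sigma}^+$, and a minimality argument on the definition of $N$ shows $N_{\sigma b}=N_\sigma$ when $\sigma\notin\mathcal{L}$ (any earlier start valid for $\sigma b$ would also be valid for $\sigma$) and $N_{\sigma b}=|\sigma|$ when $\sigma\in\mathcal{L}$. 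In both situations the Lemma~\ref{lemapulenta}-decompositions of $\sigma$ and $\sigma b$ share the same $\tau c$, their periodic-part parameters differ by at most one unit, and the ratio reduces to a quotient of two elements of $F$, possibly multiplied by $\beta^{-1}$ or $\beta^{-n}$.

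Enumerating these finitely many subcases produces a finite set $S$ of strictly positive reals containing every possible value of $\pfl(\sigma b\mid\sigma)$, and choosing $d=\min S$, $d'=\max S$ concludes the proof. The main obstacle I anticipate is the wrap-around subcase, in which $\sigma$ ends with $r_1\ldots r_m(a_1\ldots a_n)^l a_1\ldots a_{n-1}$ and $b=a_n=\overline{\sigma}^+$, so that the suffix of $\sigma b$ matching $\mathfrak{s}(\beta)$ completes one further period: here the parameter $l$ of $\pfl(\sigma)$ must transition to $l+1$ in the formula for $\pfl(\sigma b)$, and one must verify explicitly from Lemma~\ref{lemapulenta} that the extra $\beta^{-n}$ factor in $\pfl(\sigma b)$ is exactly balanced by the change of the $F$-factor from $\pfl(r_1\ldots r_m a_1\ldots a_{n-1})$ to $\pfl(r_1\ldots r_m)$, yielding a positive finite ratio independent of $l$.
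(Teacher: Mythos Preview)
Your proposal is correct and follows essentially the same approach as the paper: a four-way case analysis on whether $\sigma$ and $\sigma b$ lie in $\mathcal{L}$, then invoking Lemma~\ref{lema59} and Lemma~\ref{lemapulenta} to show that $\pfl(\sigma b\mid\sigma)$ ranges over a finite set of positive reals. Your write-up is in fact slightly more explicit than the paper's in justifying why the prefix $\tau c$ is shared between $\sigma$ and $\sigma b$ in the last case (your minimality argument for $N_{\sigma b}=N_\sigma$), and in isolating the wrap-around subcase; the paper simply lists the resulting finitely many ratio values without elaborating on those points.
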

\begin{proof}
It suffices to see that $\pfl(\sigma b\mid \sigma)$ takes only finitely many values.
First of all, in the case when $\sigma b,\sigma\in\mathcal{L}$ then $\pfl(\sigma b\mid \sigma)=\beta^{-1}$, by Lemma~\ref{lema59}.
When $\sigma\in\mathcal{L}$ but $\sigma b\notin\mathcal{L}$ then $\pfl(\sigma b\mid\sigma)=\pfl(b)$, by Lemma~\ref{lemapulenta}.
When $\sigma b\in\mathcal{L}$ but $\sigma \notin\mathcal{L}$, write $\tau c=\sigma\uh{N_\sigma}$. As remarked above, either $\sigma =\tau cr_1\dots r_l$ or $\sigma=\tau c r_1\dots r_l(a_1\dots a_n)^la_1\dots a_k$. Then, by Lemma~\ref{lemapulenta} either
$$
\pfl(\sigma b\mid \sigma)=\frac{\beta^{-(|\tau c|+l+1)}}{\pfl(\tau c)\pfl(r_1\dots r_l)}=\frac{\beta^{-(l+1
)}}{\pfl(r_1\dots r_l)}
$$
(which can take only finitely many values, since $l\leq m$ and $m$ is fixed) or
$$
\pfl(\sigma b\mid \sigma)=\frac{\beta^{-(|\tau c|+m+ln+k+1)}}{\pfl(\tau c)\pfl(r_1\dots r_ma_1\dots a_k)\beta^{-ln}}=\frac{\beta^{-(k+1)}}{\pfl(r_1\dots r_ma_1\dots a_k)}
$$
(which can take only finitely many values, since $k\leq n$, and $n$ is fixed).

When neither $\sigma b$ nor $\sigma$ are in $\mathcal{L}$, Lemma~\ref{lemapulenta} means the conditional probability $\pfl(\sigma b\mid \sigma)$ may take the following values.
\begin{itemize}
\item $\pfl(r_1\dots r_{l+1})/\pfl(r_1\dots r_l)$, if $\sigma=(\sigma\uh{N_\sigma})r_1\dots r_l$ for some $l\leq m-1$

\item $\pfl(r_1\dots r_{m}a_1)/\pfl(r_1\dots r_m)$, if $\sigma=(\sigma\uh{N_\sigma})r_1\dots r_m$

\item $\pfl(r_1\dots r_{m}a_1\dots a_{k+1})/\pfl(r_1\dots r_{m}a_1\dots a_{k})$, if $\sigma=(\sigma\uh{N_\sigma})r_1\dots r_m(a_1\dots a_n)^la_1\dots a_k$ for $0\leq l$, $k\leq n-1$

\item $\pfl(r_1\dots r_{m}a_1)\beta^{-1}/\pfl(r_1\dots r_{m}a_1\dots a_{n})$, if $\sigma=(\sigma\uh{N_\sigma})r_1\dots r_m(a_1\dots a_n)^la_1\dots a_n$ for $0\leq l$
\end{itemize}
Since these expressions may only take finitely many values (for fixed $r_1,\dots,r_m$ and fixed $a_1,\dots,a_n$), the proof is finished.
\end{proof}

\begin{corollary}\label{eq:bound_L}If $\beta$ is Pisot, then
$
\pfl(\sigma)\leq \beta^{-|\sigma|}.
$
\end{corollary}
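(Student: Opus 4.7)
My plan is to split into cases based on whether $\sigma\in\mathcal{L}$, and in the harder case reduce the inequality, via Lemma~\ref{lemapulenta}, to a clean sub-claim about $\pfl(\alpha)$ for $\alpha$ a prefix of $\mathfrak{s}(\beta)$. The case $\sigma=\emptyset$ is trivial since $\pfl(\emptyset)=1=\beta^0$.

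First, if $\sigma\in\mathcal{L}$, Lemma~\ref{lema59} gives $\pfl(\sigma)=\beta^{-|\sigma|}$ and we are done with equality. Otherwise, writing $\tau c=\sigma\uh{N_\sigma}$, the word $\tau c$ lies in $\mathcal{L}$ (this is Observation~\ref{obse3} for $N_\sigma\geq 2$, and a direct verification using the minimality of $N_\sigma$ in the boundary cases $N_\sigma\in\{0,1\}$, noting that $N_\sigma=0$ degenerates to $\tau c=\emptyset$ with $\pfl(\emptyset)=1=\beta^{-N_\sigma}$). Hence Lemma~\ref{lema59} gives $\pfl(\tau c)=\beta^{-N_\sigma}$. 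Substituting this into the two decompositions of Lemma~\ref{lemapulenta}, and using $|\sigma|=N_\sigma+l$ in the first sub-case and $|\sigma|=N_\sigma+m+ln+k$ in the second, the inequality $\pfl(\sigma)\leq\beta^{-|\sigma|}$ reduces to the sub-claim
\[
\pfl(\alpha)\leq\beta^{-|\alpha|}\qquad\text{for every prefix }\alpha\text{ of }\mathfrak{s}(\beta).
\]

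For this sub-claim, I observe that every element of $I_\alpha$ has a $\beta$-expansion that extends $\alpha$ and, by Theorem~\ref{betacaract}, is lexicographically bounded by $\mathfrak{s}(\beta)=p_\beta(1)$, hence numerically bounded by $1$. Thus $I_\alpha\subseteq[\rea\alpha,1]$, giving $\pfl(\alpha)\leq 1-\rea\alpha$. Writing $\mathfrak{s}(\beta)=\alpha s$ with $s=T^{|\alpha|}\mathfrak{s}(\beta)$ and extending $\rea{\cdot}$ to infinite sequences by the natural series, the identity $1=\rea{\mathfrak{s}(\beta)}=\rea\alpha+\beta^{-|\alpha|}\rea s$ rewrites $1-\rea\alpha$ as $\beta^{-|\alpha|}\rea s$. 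Since $s$ is a shift of $\mathfrak{s}(\beta)$ it is itself a valid $\beta$-expansion, so $\rea s\in[0,1]$ and the sub-claim follows.

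The main obstacle I expect is justifying rigorously that lexicographic order on sequences of $X_\beta$ translates to numerical order of their $\rea{\cdot}$ values, which is what underlies the inclusion $I_\alpha\subseteq[\rea\alpha,1]$ when $\alpha$ is a prefix of $\mathfrak{s}(\beta)$ (in particular, that no word of length $|\alpha|$ in $L(X_\beta)$ is lex-greater than $\alpha$). Once that correspondence is granted, the rest is an exponent-bookkeeping exercise in the case split furnished by Lemmas~\ref{lema59} and~\ref{lemapulenta}.
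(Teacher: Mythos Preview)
Your proof is correct and follows essentially the same route as the paper: reduce via Lemma~\ref{lemapulenta} (and Lemma~\ref{lema59} for the $\mathcal{L}$ case) to the inequality $\pfl(\mathfrak{s}(\beta)\uh{k})\leq\beta^{-k}$ for prefixes of $\mathfrak{s}(\beta)$. The paper simply asserts this sub-claim without proof, whereas you supply the argument; note that the inclusion $I_\alpha\subseteq[\rea{\alpha},1]$ is immediate from $I_\alpha\subseteq[0,1)$, and the genuine use of the order correspondence is in bounding $\rea{T^{|\alpha|}\mathfrak{s}(\beta)}\leq 1$, which follows since $\rea{T^k\mathfrak{s}(\beta)}=T_\beta^k(1)\in[0,1]$.
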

\begin{proof}
It is enough to note that
$
\pfl(\mathfrak{s}(\beta)\uh{k})\leq \beta^{-k}
$
(this is true for any $\beta$, Pisot or not)
and then use Lemma~\ref{lemapulenta}.
\end{proof}

The following is a straightforward consequence of Proposition~\ref{prop:savings-property} and Corollary~\ref{coroconstantes}:

\begin{corollary}\label{cor:malabia}
If $\beta>1$ is Pisot and $M$ is a $P_\beta$-martingale on $L(X_\beta)$ with the savings property, then there is $c$ such that $M(\sigma)\leq c\cdot |\sigma|+M(\emptyset)$ for all $\sigma\in L(X_\beta)$.
\end{corollary}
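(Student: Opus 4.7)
The plan is to reduce the statement directly to Proposition~\ref{prop:savings-property} by verifying its hypotheses for the setting $L=L(X_\beta)$ and $P=P_\beta$. The factoriality and prolongability of $L(X_\beta)$ are automatic from the general fact that the language of any subshift has these properties, so the only nontrivial hypothesis to check is the existence of a constant $a>0$ such that $1-P_\beta(\sigma b\mid\sigma)\leq a\cdot P_\beta(\sigma b\mid\sigma)$ for every $\sigma b\in L(X_\beta)$.

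To verify this, I would first invoke Corollary~\ref{coroconstantes}, which gives a constant $d>0$ with $\pfl(\sigma b\mid\sigma)\geq d$ whenever $\pfl(\sigma b)>0$. Since $P_\beta$ is $L(X_\beta)$-supported (by Theorem on the Parry measure recalled in \S\ref{subsec:measure}) and $\pfl$ has the same support, the condition $\sigma b\in L(X_\beta)$ is equivalent to $\pfl(\sigma b)>0$. Next I would transfer the lower bound from $\pfl$ to $P_\beta$ using inequality~\eqref{eq:bound_P}: writing $k,k'>0$ for the constants there,
\[
P_\beta(\sigma b\mid\sigma)=\frac{P_\beta(\sigma b)}{P_\beta(\sigma)}\geq \frac{k'\,\pfl(\sigma b)}{k\,\pfl(\sigma)}=\frac{k'}{k}\pfl(\sigma b\mid\sigma)\geq \frac{k' d}{k}>0.
\]
Setting $a=k/(k'd)$, it follows immediately that
\[
\frac{1-P_\beta(\sigma b\mid\sigma)}{P_\beta(\sigma b\mid\sigma)}\leq\frac{1}{P_\beta(\sigma b\mid\sigma)}\leq a,
\]
which is precisely the hypothesis required by Proposition~\ref{prop:savings-property}.

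With all hypotheses verified, a direct application of Proposition~\ref{prop:savings-property} to the $P_\beta$-martingale $M$ on $L(X_\beta)$ (which has the savings property by assumption via some constant $c_0$) yields a bound of the form $M(\sigma)\leq c_0\cdot a\cdot |\sigma|+M(\emptyset)$ for every $\sigma\in L(X_\beta)$. Taking $c=c_0\cdot a$ gives the desired conclusion. There is no real obstacle here; the only place to be careful is the bookkeeping between $\pfl$ and $P_\beta$ via~\eqref{eq:bound_P}, and making sure one uses the lower bound on the conditional probabilities uniformly over all admissible continuations, which is exactly what Corollary~\ref{coroconstantes} provides.
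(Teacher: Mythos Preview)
Your proof is correct and follows exactly the route the paper takes: it too states the corollary as a ``straightforward consequence of Proposition~\ref{prop:savings-property} and Corollary~\ref{coroconstantes}.'' You have simply filled in the bookkeeping step (transferring the uniform lower bound on $\pfl(\sigma b\mid\sigma)$ to $P_\beta(\sigma b\mid\sigma)$ via~\eqref{eq:bound_P}) that the paper leaves implicit.
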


\newcommand{\rep}[2]{\langle0.#1\rangle_{#2}}
\newcommand{\cdfM}{\cdf_M}
\newcommand{\cdf}{{\sf cdf}}
\newcommand{\mart}{{\sf mart}}
\newcommand{\voc}{\Sigma}

Let $\beta>1$ be Pisot.
Each $P_\beta$-martingale $M$ on $L(X_\beta)$ induces a measure $\mu_M$ on the
algebra of word cylinders defined by $\mu_M([\sigma])=M(\sigma)\cdot P_\beta(\sigma)$,
for $\sigma\in L(X_\beta)$. Via Carath\'eodory's extension theorem
this measure can be extended to a Borel measure on $\Sigma^\NN$, and
if $\mu_M$ is atomless (i.e.\ no point has positive measure), we can
also think of it as a Borel measure on $[0,1]$, which is given by
%
$
\mu_M(I_\sigma)=M(\sigma)\cdot P_\beta(\sigma),
$
%

We say that a martingale $M$ is {\em atomless} if $\mu_M$ is atomless.

\begin{observation}
If $M$ is a $P_\beta$-martingale with the savings property then it is atomless.
\end{observation}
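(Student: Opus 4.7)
The plan is to bound $\mu_M$ on shrinking cylinders and show the bound tends to zero, so no single point can carry positive mass.

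First I would fix $x\in[0,1]$ and consider its $\beta$-expansion $s=p_\beta(x)$. The singleton $\{x\}$ is contained in the intersection of the nested intervals $I_{s\uh{N}}$, so that
\[
\mu_M(\{x\})\;\leq\;\lim_{N\to\infty}\mu_M(I_{s\uh{N}})\;=\;\lim_{N\to\infty}M(s\uh{N})\cdot P_\beta(s\uh{N}).
\]
If $s\notin X_\beta$ then some prefix $s\uh{N_0}$ is not in $L(X_\beta)$ and hence $P_\beta(s\uh{N_0})=0$ by $L(X_\beta)$-supportedness of $P_\beta$, and the right-hand side is zero. So the interesting case is $s\in X_\beta$, where every prefix lies in $L(X_\beta)$.

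Next I would estimate the two factors separately for $s\in X_\beta$. For the measure side, Corollary~\ref{eq:bound_L} gives $\pfl(s\uh{N})\leq \beta^{-N}$, and then the bound~\eqref{eq:bound_P} yields $P_\beta(s\uh{N})\leq k\cdot\beta^{-N}$. For the martingale side, Corollary~\ref{coroconstantes} gives a positive lower bound $d$ on the conditional probabilities $\pfl(\sigma b\mid\sigma)$, which via~\eqref{eq:bound_P} translates into a positive lower bound on the conditional probabilities of $P_\beta$; that is exactly the hypothesis needed in Proposition~\ref{prop:savings-property} (with $a=(1-d')/d'$ or a similar constant depending on $k,k',d$). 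Consequently, by Corollary~\ref{cor:malabia}, $M(s\uh{N})\leq c\cdot N + M(\emptyset)$ for some constant $c>0$.

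Combining the two bounds gives
\[
\mu_M(\{x\})\;\leq\;\lim_{N\to\infty}\bigl(c\cdot N + M(\emptyset)\bigr)\cdot k\cdot\beta^{-N}\;=\;0,
\]
since $\beta>1$ makes the geometric factor dominate the linear growth. As $x\in[0,1]$ was arbitrary, $\mu_M$ has no atoms, proving the observation. No step looks particularly hard; the only thing to check carefully is that the hypothesis of Proposition~\ref{prop:savings-property} is applicable via Corollary~\ref{coroconstantes}, that is, that conditional $P_\beta$-probabilities are uniformly bounded away from zero (and one) on $L(X_\beta)$, which is what converts the savings property into the linear bound on $M$.
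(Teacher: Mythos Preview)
Your argument is correct and follows essentially the same route as the paper: bound $P_\beta(\sigma)\le k\beta^{-|\sigma|}$ via \eqref{eq:bound_P} and Corollary~\ref{eq:bound_L}, bound $M(\sigma)$ linearly via Corollary~\ref{cor:malabia} (which already packages the passage from Corollary~\ref{coroconstantes} to the hypothesis of Proposition~\ref{prop:savings-property} that you spell out), and observe that the product tends to zero. One small framing remark: since atomlessness here is a statement about $\mu_M$ as a measure on $\Sigma^\NN$, it is cleaner to fix an arbitrary $s\in\Sigma^\NN$ and bound $\mu_M(\{s\})\le\mu_M([s\uh{N}])$ directly; your case ``$s\notin X_\beta$'' is then genuine, whereas starting from $x\in[0,1]$ and setting $s=p_\beta(x)$ makes that case vacuous (such $s$ always lie in $X_\beta$).
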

\begin{proof}
Indeed, by~\eqref{eq:bound_P} and Corollary~\ref{eq:bound_L}, there is a constant $k$ such that for any $\sigma\in
L(X_\beta)$, $P_\beta(\sigma)\leq k\cdot \beta^{-|\sigma|}$. By
Corollary~\ref{cor:malabia}, there is a constant $c$ such that for any $\sigma\in
L(X_\beta)$ of length $n$ we have $\mu_M(I_\sigma)\leq k\cdot \beta^{-n} \cdot (d\cdot n + M(\emptyset))$,
and this goes to $0$ as $n$ goes to
infinity. Hence $\mu_M$ is atomless.
\end{proof}
The cumulative distribution function associated with $\mu_M$ will be written $\cdfM(x)$ for $x\in[0,1]$.
We now want to prove an analogue of the left-to-right implication of Theorem 3.6 from~\cite{Bratt11} (which is used as Theorem 3 in~\cite{Fi2013}).
\begin{theorem} \label{thm:succeeds-then-slope-diverges}
Let $M$ be a $P_\beta$-martingale with the savings property that succeeds on the $\beta$-expansion of $z\in (0,1)$, a non-$\beta$-adic real. Then
\begin{equation*}
\liminf_{h\to 0}\frac{\cdfM(z+h)-\cdfM(z)}{h}=\infty.
\end{equation*}
\end{theorem}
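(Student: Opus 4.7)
The plan is the following. First, I will use the savings property to upgrade the fact that $M$ succeeds on $s=p_\beta(z)$ to the stronger statement $\lim_n M(\sigma_n)=\infty$, where $\sigma_n=s\uh{n}$: if $M(\sigma_{n_0})>K+c$ for some $n_0$ (which exists because $\limsup_n M(\sigma_n)=\infty$), the savings property gives $M(\sigma_n)\geq M(\sigma_{n_0})-c>K$ for every $n\geq n_0$. Since $z$ is not $\beta$-adic, every $\sigma_n$ is a proper prefix of $s$, and $z$ sits strictly between $\rea{\sigma_n}$ and $\rea{\text{next}(\sigma_n)}$: the strict inequality $z>\rea{\sigma_n}$ holds because the tail $\sum_{k>n}s_k\beta^{-k}$ cannot vanish, and the right inequality is just $z\in I_{\sigma_n}$. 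I then set $r_n=\rea{\text{next}(\sigma_n)}-z>0$; the nesting $I_{\sigma_{n+1}}\subseteq I_{\sigma_n}$ makes this sequence non-increasing, and $r_n\leq \pfl(\sigma_n)\leq \beta^{-n}$ by Corollary~\ref{eq:bound_L}, so $r_n\to 0$.

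Given small $h>0$, define $n(h)=\max\{n:r_n\geq h\}$; this is well-defined because $r_n\to 0$, it tends to infinity as $h\to 0^+$, and by construction $(z,z+h)\subseteq I_{\sigma_{n(h)}}$. The central lower bound comes from an internal approximation of $(z,z+h)$ by cylinders of a large common length. For each integer $m>n=n(h)$ let $U_m$ be the union of all $I_\tau$ with $|\tau|=m$, $\tau\succeq\sigma_n$, and $I_\tau\subseteq(z,z+h)$. The level-$m$ cylinders extending $\sigma_n$ partition $I_{\sigma_n}$ up to a countable set of $\beta$-adic boundary points, so at most two of them straddle the endpoints of $(z,z+h)$, and each of those has Lebesgue measure at most $\beta^{-m}$ (again Corollary~\ref{eq:bound_L}). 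Hence $\lambda(U_m)\geq h-2\beta^{-m}$. For every $\tau$ contributing to $U_m$, the savings property gives $M(\tau)\geq M(\sigma_n)-c$, and inequality~\eqref{eq:bound_P} gives $P_\beta(\tau)\geq k'\pfl(\tau)=k'\lambda(I_\tau)$. Summing,
\begin{equation*}
\mu_M((z,z+h))\geq\mu_M(U_m)=\sum_{\tau}M(\tau)\,P_\beta(\tau)\geq k'(M(\sigma_n)-c)(h-2\beta^{-m}).
\end{equation*}
Letting $m\to\infty$ (the left-hand side does not depend on $m$) produces $\mu_M((z,z+h))\geq k'(M(\sigma_{n(h)})-c)\,h$. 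Since $M$ has the savings property it is atomless, so $\cdfM(z+h)-\cdfM(z)=\mu_M((z,z+h])=\mu_M((z,z+h))$, and therefore $(\cdfM(z+h)-\cdfM(z))/h\geq k'(M(\sigma_{n(h)})-c)$, which tends to $\infty$ as $h\to 0^+$ because $n(h)\to\infty$. The case $h\to 0^-$ is symmetric, with the quantity $\ell_n=z-\rea{\sigma_n}>0$ (positive for the same reason as $r_n$) playing the role of $r_n$ and the cylinders $I_\tau\subseteq(z+h,z)$ playing the role of those above.

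The main obstacle relative to the integer base argument of~\cite{Bratt11,Fi2013} is that in the $\beta$-shift the level-$m$ cylinders do not all have Lebesgue length $\beta^{-m}$ and the Parry measure is not uniform on $I_{\sigma_n}$, so neither the density $\mu_M(I_\tau)/\lambda(I_\tau)$ nor the boundary correction is automatic. Both difficulties are resolved by the pair of earlier results I rely on: the uniform upper bound $\pfl(\tau)\leq \beta^{-|\tau|}$ of Corollary~\ref{eq:bound_L} keeps the boundary error $2\beta^{-m}$ independent of $\tau$, and the two-sided comparison $k'\pfl\leq P_\beta\leq k\pfl$ of~\eqref{eq:bound_P} converts a sum over $\mu_M$ into a sum over Lebesgue measure, after which the savings property finishes the estimate.
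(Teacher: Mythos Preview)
Your argument is correct and rests on the same two pillars as the paper's proof: the savings property, which forces $M$ to stay uniformly large on \emph{all} extensions of a deep enough prefix of $p_\beta(z)$, and the comparison $P_\beta\geq k'\pfl$ of~\eqref{eq:bound_P}, which converts $\mu_M$-mass into Lebesgue length.

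The execution differs in one genuine respect. You approximate $(z,z+h)$ \emph{from inside} by the level-$m$ cylinders it contains, absorb the two boundary cylinders via the uniform bound $\pfl(\tau)\leq\beta^{-|\tau|}$ of Corollary~\ref{eq:bound_L}, and then let $m\to\infty$. The paper instead covers $(z,z+h)$ \emph{exactly} by a prefix-free family $W$ of cylinders all extending a fixed $\rho=z^\beta\uh{i}$, which yields a one-line estimate $\sum_{\sigma\in W}M(\sigma)P_\beta(\sigma)>rk'\sum_{\sigma\in W}\pfl(\sigma)=rk'|h|$ without any limiting step. The price the paper pays is that it must first find an $\epsilon$ such that $z\pm h\in I_\rho$ whenever $|h|<\epsilon$; for this it invokes the non-$\beta$-adic hypothesis (to keep $z-h$ inside) \emph{and} Lemma~\ref{lemaexp} on $\beta$-expansions not being eventually maximal (to keep $z+h$ inside). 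Your adaptive choice of $n(h)=\max\{n:r_n\geq h\}$ bypasses Lemma~\ref{lemaexp} entirely: all you need is $r_n>0$, which follows immediately from $z\in I_{\sigma_n}$. So your route trades a combinatorial lemma for an $m\to\infty$ limit; both are clean, and yours is arguably more self-contained.
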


\begin{proof}
The proof is essentially the same as in~\cite{Bratt11}. Let $g=\cdfM$ and let $r>0$. We will show that there is some $\epsilon>0$ such that if $|h|<\epsilon$ then $|g(z+h)-g(z)|>rk'|h|$, where $k'$ is the constant such that $k'\lambda(A)\leq \widehat{P}_\beta(A)$ from~\eqref{eqn:bound_hatP}.

Let $(z^\beta_{i})_{i\geq 1}$ be the fractional $\beta$-expansion of $z$. Since $M$ succeeds on $(z_{\beta,i})_{i\geq 1}$ and has the savings property, there is some $i$ such that, if $\rho=z^\beta\uh{i}$, then $M(\rho\tau)>r$ for any $\tau$ such that $\rho\tau\in L(X_\beta)$. Since $z$ is not $\beta$-adic there is some $j>i$ such that $z^\beta_j\neq0$, and by Lemma~\ref{lemaexp} there is some $k>j$ such that $z^\beta_{k+1}\neq \overline{z^\beta\uh{k}}^+$. Let $\epsilon=\beta^{-k-1}$. If $0<|h|<\epsilon$ then the $\beta$-expansion of $z+h$ extends $\rho$. If $h>0$ this is because $z+h<z+\beta^{-k-1}$ and $\beta^{k+1}$ has the same $\beta$-expansion as $z$, except that $z^\beta_{k+1}$ is replaced with $1+z^\beta_{k+1}$, which at worst is $\overline{z^\beta\uh{k}}^+$. Similarly, if $h<0$, then $z+h>z-\beta^{-k-1}>z-\beta^{-j}$ and $z-\beta^{-j}$ has the same $\beta$-expansion as $z$, except that $z^\beta_j$ is replaced with $z^\beta_j-1$, which at worst is 0. This means that, if $W\subseteq L(X_\beta)$ is a prefix-free set of strings such that $\bigcup_{\sigma\in W}I_\sigma=(z,z+h)$ in case $h>0$, or $\bigcup_{\sigma\in W}I_\sigma=(z+h,z)$ in case $h<0$, then all strings in $W$ extend $\rho$. Hence,
$$
|g(z+h)-g(z)|=\sum_{\sigma\in W}M(\sigma)P_\beta(\sigma)>rk'\sum_{\sigma \in W}\pfl(\sigma)=rk'\sum_{\sigma\in W}\lambda(I_\sigma)=rk'|h|.
$$
\end{proof}

In~\cite{Bratt11} it is shown that if $f$ is a nondecreasing function with domain
containing $[0,1]\cap\QQ$ then
$\mart_f\colon\{0,1\}^*\to\RR$ defined by
$$
\mart_f(\tau)=\frac{f(\reabin{\tau}+2^{-|\tau|})-f(\reabin{\tau})}{2^{-|\tau|}}.
$$
is a classical martingale. It is also observed in \cite[Fact 3.5]{Bratt11} that if $f(0) = 0$. Then $\cdf_{\mart_f} = f$. In the next lemma we use these facts for $f=\cdfM$, the cumulative distribution function of our $P_\beta$-martingale.

\begin{lemma}\label{lem:martingales-base-conversion}
Let $\beta>1$ be Pisot. Suppose $M$ is a $P_\beta$-martingale with the savings property. Let $N\colon\{0,1\}^*\to\RR_{\geq 0}$ be the
following (classical) martingale:
\begin{equation*}
N(\tau)=\mart_{\cdfM}(\tau)=\frac{\cdfM(\reabin{\tau}+2^{-|\tau|})-\cdfM(\reabin{\tau})}{2^{-|\tau|}}.
\end{equation*}
Suppose $s\in X_\beta$ and that there exists $x\in[0,1]$ neither $\beta$-adic, nor a dyadic rational such that $p_\beta(x)=s$. If $M$ succeeds
on $s$ then $N$ succeeds on the fractional binary expansion of $x$.
\end{lemma}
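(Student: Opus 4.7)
The plan is to combine Theorem~\ref{thm:succeeds-then-slope-diverges} (divergence of the slope of $\cdfM$ at $x$) with a simple splitting of the dyadic interval $[\reabin{\tau_n},\reabin{\tau_n}+2^{-n}]$ around the point $x$. The hypothesis that $x$ is neither $\beta$-adic nor dyadic is what keeps both pieces of this split genuinely two-sided, so that the one-sided information provided by the $\liminf$ of Theorem~\ref{thm:succeeds-then-slope-diverges} can be applied on each side.

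Concretely, let $(x_i)_{i\geq 1}\in\{0,1\}^\NN$ be the fractional binary expansion of $x$, and put $\tau_n=x_1\dots x_n$ and $a_n=\reabin{\tau_n}$. Since $x$ is not a dyadic rational we have $a_n<x<a_n+2^{-n}$, so both
$$
h_n^+:=a_n+2^{-n}-x\quad\text{and}\quad h_n^-:=x-a_n
$$
are strictly positive and satisfy $h_n^++h_n^-=2^{-n}$. Rewriting $N(\tau_n)\cdot 2^{-n}=\cdfM(a_n+2^{-n})-\cdfM(a_n)$ by inserting $\pm\cdfM(x)$ yields
\begin{equation*}
N(\tau_n)\cdot 2^{-n}=\bigl[\cdfM(x+h_n^+)-\cdfM(x)\bigr]+\bigl[\cdfM(x)-\cdfM(x-h_n^-)\bigr].
\end{equation*}

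Now, since $x$ is not $\beta$-adic and $M$ succeeds on $s=p_\beta(x)$, Theorem~\ref{thm:succeeds-then-slope-diverges} gives $\liminf_{h\to0}(\cdfM(x+h)-\cdfM(x))/h=\infty$. Hence, for every $R>0$ there is $\epsilon>0$ such that for all $t$ with $0<t<\epsilon$ both one-sided bounds
$$
\cdfM(x+t)-\cdfM(x)\geq R\,t,\qquad \cdfM(x)-\cdfM(x-t)\geq R\,t
$$
hold (these are the two one-sided consequences of the two-sided $\liminf$, using that $\cdfM$ is nondecreasing). Picking $n$ large enough that $2^{-n}<\epsilon$, we have $0<h_n^\pm<\epsilon$, so applying these bounds with $t=h_n^+$ and $t=h_n^-$ gives
$$
N(\tau_n)\cdot 2^{-n}\geq R\,h_n^++R\,h_n^-=R\cdot 2^{-n},
$$
that is, $N(\tau_n)\geq R$. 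Since $R$ was arbitrary, $N(\tau_n)\to\infty$, which is exactly the statement that $N$ succeeds on $(x_i)_{i\geq 1}$.

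The only subtlety—and the place where the hypotheses on $x$ are truly used—is the two-sided split: if $x$ were a dyadic rational then $h_n^-$ (or $h_n^+$) would vanish for large $n$ and the left-hand piece would collapse, so that success of $M$ would only bound a one-sided slope, providing no lower bound on $N(\tau_n)$; and if $x$ were $\beta$-adic then Theorem~\ref{thm:succeeds-then-slope-diverges} could not be invoked in the first place. Once both hypotheses are in place the argument reduces to the displayed computation; no further use of the martingale structure or of $\beta$-adic machinery is required.
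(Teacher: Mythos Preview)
Your proof is correct and is essentially the argument the paper has in mind: the paper simply says ``Same proof as in \cite[Lemma 11]{Fi2013}, with Theorem~\ref{thm:succeeds-then-slope-diverges} substituting for \cite[Theorem 10]{Fi2013},'' and that proof is exactly the two-sided split of the dyadic interval around $x$ combined with the slope divergence from Theorem~\ref{thm:succeeds-then-slope-diverges}. Your explicit write-up (including the remark on why non-dyadicity of $x$ is needed for the split and non-$\beta$-adicity for invoking Theorem~\ref{thm:succeeds-then-slope-diverges}) faithfully unpacks what the reference contains.
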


\begin{proof} Same proof as in \cite[Lemma 11]{Fi2013}, with Theorem~\ref{thm:succeeds-then-slope-diverges} substituting for \cite[Theorem 10]{Fi2013}
\end{proof}


\subsection{\boldmath{$\mu_M$} and \boldmath{$\cdfM$} are polynomial time computable}\label{subsec:cdf}

As in~\cite{Fi2013}, we show an `almost Lipschitz' condition for $\cdfM$:
\begin{proposition}\label{cor:lipschitz}
Let $\beta>1$ be Pisot and let $M$ be a $P_\beta$-martingale on $L(X_\beta)$ with the savings property.  Then
there are constants $k,\epsilon>0$ such that for every
$x,y\in[0,1]$, if $y-x\leq\epsilon$ then
$$
\cdfM(y)-\cdfM(x)\leq -k\cdot(y-x)\cdot\log(y-x).
$$
\end{proposition}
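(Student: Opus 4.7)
The plan is to bound $\cdfM(y)-\cdfM(x)=\mu_M((x,y))$ directly by covering the interval $(x,y)$ with finitely many cylinders $I_\sigma$ of a carefully chosen common length $n$, and then controlling each $\mu_M(I_\sigma)$ via the savings property. Concretely, given $h=y-x>0$ small, I would pick $n=\lceil -\log_\beta h\rceil$, so that $\beta^{-n}\leq h<\beta^{1-n}$.

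The first key step is a uniform lower bound of the form $\pfl(\sigma)\geq c_1\beta^{-|\sigma|}$ valid for every $\sigma\in L(X_\beta)$. This is not stated explicitly in the excerpt but is implicit in the analysis of \S\ref{subsec:measure}: by Lemma~\ref{lema59}, $\pfl(\sigma)=\beta^{-|\sigma|}$ when $\sigma\in\mathcal L$, while for $\sigma\notin\mathcal L$ Lemma~\ref{lemapulenta} gives $\pfl(\sigma)/\beta^{-|\sigma|}$ equal to either $\beta^l\pfl(r_1\dots r_l)$ with $l\leq m$, or $\beta^{m+k}\pfl(r_1\dots r_m a_1\dots a_k)$ with $k\leq n$. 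In both cases the ratio ranges over a finite set of positive reals (here the Pisot hypothesis enters through Theorem~\ref{pisotsofic}, which gives the eventual periodicity $\mathfrak{s}(\beta)=r_1\dots r_m(a_1\dots a_n)^\infty$), so the ratio is bounded below by some $c_1>0$.

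With this in hand, the cylinders $I_\sigma$ for $\sigma\in L(X_\beta)$ of length $n$ partition $[0,1)$ up to measure-zero endpoints, each of Lebesgue measure at least $c_1\beta^{-n}\geq c_1 h/\beta$. Hence the set $W$ of those $\sigma$'s whose cylinder meets $(x,y)$ satisfies $|W|\leq \beta/c_1+2=:C$. For each such $\sigma$, combining the savings-property bound $M(\sigma)\leq c|\sigma|+M(\emptyset)$ from Corollary~\ref{cor:malabia} with $P_\beta(\sigma)\leq k\pfl(\sigma)\leq k\beta^{-|\sigma|}$ (using \eqref{eq:bound_P} and Corollary~\ref{eq:bound_L}), I get
$$
\mu_M(I_\sigma)=M(\sigma)P_\beta(\sigma)\leq k\beta^{-n}\bigl(cn+M(\emptyset)\bigr).
$$
Summing over $W$ and using $\beta^{-n}\leq h$,
$$
\cdfM(y)-\cdfM(x)\leq Ckh\bigl(cn+M(\emptyset)\bigr).
$$

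To finish, I would convert the factor $cn+M(\emptyset)$ into a multiple of $-\log h$: since $n\leq 1-(\log h)/\log\beta$, once $h\leq\epsilon$ for some $\epsilon\in(0,1)$ small enough that $-\log h$ dominates the additive constants, there is $\tilde k>0$ with $cn+M(\emptyset)\leq -\tilde k\log h$, and the final constant $k:=Ck\tilde k$ witnesses the proposition. The main obstacle is the first step, i.e.\ extracting the uniform lower bound $\pfl(\sigma)\geq c_1\beta^{-|\sigma|}$; everything else is a routine packaging of results already proved, but this inequality is what makes the cover by a uniformly bounded number of cylinders possible and genuinely uses that $\beta$ is Pisot.
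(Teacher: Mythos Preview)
Your argument is correct, but it takes a different route from the paper's. The paper also covers $(x,y)$ by the set $\Theta$ of length-$n$ cylinders meeting $[x,y]$ (with $n$ least such that $\beta^{-n}<y-x$) and uses Corollary~\ref{cor:malabia} to pull out the factor $cn+M(\emptyset)$. The divergence is in how $\sum_{\sigma\in\Theta}P_\beta(\sigma)$ is controlled: you first prove a uniform lower bound $\pfl(\sigma)\geq c_1\beta^{-|\sigma|}$ and use it to bound the \emph{cardinality} $|\Theta|$ by a constant, then bound each term by $k\beta^{-n}\leq kh$; the paper instead observes that all but the two extreme cylinders $\sigma_1,\sigma_m$ are contained in $[x,y]$, so $\sum_{\sigma\in\Theta}P_\beta(\sigma)\leq k\lambda([x,y])+P_\beta(\sigma_1)+P_\beta(\sigma_m)\leq(k+2)(y-x)$, using only the upper bounds \eqref{eq:bound_P} and Corollary~\ref{eq:bound_L}. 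Your extra ingredient---the lower bound $\pfl(\sigma)\geq c_1\beta^{-|\sigma|}$, which you correctly extract from Lemmas~\ref{lema59} and~\ref{lemapulenta}---is a genuine fact that the paper never isolates, and it buys you a bounded-cardinality cover; the paper's approach is a little leaner in that it sidesteps this lemma entirely.
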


\begin{proof}
We actually show that there are constants $c$ and $d$ such that

\begin{equation}\label{eq:lipschitz}
\cdfM(y)-\cdfM(x)  \leq
            d\cdot (y-x)\cdot (c\cdot (1-\log_\beta(y-x))
            +M(\emptyset))
\end{equation}
for $0\leq x<y\leq 1$. Let $n\in \NN$ be the
least integer such that $\beta^{-n}<y-x$, and let
$$
\Theta=\{\sigma\in L(X_\beta) \mid |\sigma|=n, I_\sigma \cap [x,y]\not=\emptyset\}.
$$
So we may write $\Theta=\{\sigma_1,\dots,\sigma_m\}$, with $\sigma_i<_{\rm lex}\sigma_{i+1}$ for all $i<m$. Let $p$ be the left end-point of $I_{\sigma_1}$ and let $q$ the right end-point of $I_{\sigma_m}$. Clearly both $p$ and $q$ are $\beta$-adic and $[x,y]\subseteq\bigcup_{\sigma\in\Theta}I_\sigma=[p,q]$.

We have
\begin{align*}
\cdfM(y)-\cdfM(x)   &\leq  \cdfM(q)-\cdfM(p)= \mu_M [p,q]=\sum_{\sigma\in \Theta}P_\beta(\sigma)\cdot M(\sigma)\\
            &\le (c\cdot n+M(\emptyset))\sum_{\sigma\in \Theta}P_\beta(\sigma)\tag{by Corollary~\ref{cor:malabia}}\\
            &\le \left((c\cdot (1-\log_\beta (y-x))\right)+M(\emptyset))\sum_{\sigma\in \Theta}P_\beta(\sigma)\tag{$\beta^{-(n-1)}\geq y-x$}.
\end{align*}
Since for each $\sigma\in\Theta$ we have $P_\beta(\sigma)=\widehat P_\beta(I_\sigma)$, from~\eqref{eqn:bound_hatP} we conclude $P_\beta(\sigma)\leq k\cdot\lambda(I_\sigma)$, and so $\sum_{\sigma\in \Theta}P_\beta(\sigma)\leq k\cdot\lambda([x,y])+P_\beta(\sigma_1)+P_\beta(\sigma_m)$. From~\eqref{eq:bound_P} and Corollary~\ref{eq:bound_L} we know that $P_\beta(\sigma_1)$ and $P_\beta(\sigma_m)$ are at most $\beta^{-n}<y-x)$. Hence we conclude~\eqref{eq:lipschitz} for $d=k+2$.
\end{proof}

\begin{proposition}\label{prop:P-polytime}
If $\beta>1$ is Pisot then $P_\beta$ is polynomial time computable.
\end{proposition}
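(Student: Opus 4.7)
The plan is to exploit the closed-form expression for $\widehat{P}_\beta$ in~\eqref{eqn:teoparry} of Theorem~\ref{teoparry} together with the fact that, for Pisot $\beta$, the sequence $\mathfrak{s}(\beta)$ is eventually periodic (Theorem~\ref{pisotsofic}). Periodicity forces the orbit $(T_\beta^n(1))_{n\geq 0}$ to take only finitely many distinct values, which makes the integrand in~\eqref{eqn:teoparry} simple enough to evaluate explicitly over any interval with polynomial time computable endpoints.

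Given $\sigma\in\Sigma_\beta^*$ and $i\in\NN$, I would first check $\sigma\in L(X_\beta)$ in linear time via Proposition~\ref{prop:shift_is_poly}, returning $0$ if not. Otherwise, write $I_\sigma=[a,b)$ with $a=\rea{\sigma}$ and $b=a+\pfl(\sigma)$. The endpoint $a$ is polynomial time computable by Proposition~\ref{cor:rea_is_poly}, and $\pfl(\sigma)$ (hence $b$) is polynomial time computable via the explicit closed form in Lemmas~\ref{lema59} and~\ref{lemapulenta}, which expresses $\pfl(\sigma)$ as a product of $\beta^{-|\sigma|}$ with constants drawn from a fixed finite set depending only on the periodic expansion of $1$ (the case split being driven by $N_\sigma$, which is computable in polynomial time by suffix matching $\sigma$ against $\mathfrak{s}(\beta)$). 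From $P_\beta(\sigma)=\widehat P_\beta(I_\sigma)$ and~\eqref{eqn:teoparry},
$$
P_\beta(\sigma)=\sum_{n\geq 1}\beta^{-n}\max\!\bigl(0,\,\min(b,T_\beta^n(1))-a\bigr).
$$

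To approximate $P_\beta(\sigma)$ to additive error $2^{-i}$ I would truncate this series at $N=O(i)$, chosen so that the geometric tail $\beta^{-N}/(\beta-1)$ is at most $2^{-i-1}$. For each $n\leq N$, the real $T_\beta^n(1)=\sum_{k\geq 1}\mathfrak{s}(\beta)_{n+k}\beta^{-k}$ belongs to a fixed finite set of algebraic reals over $\QQ(\beta)$ (of size at most $m+\ell$, if $\mathfrak{s}(\beta)=r_1\dots r_m(a_1\dots a_\ell)^\infty$), each computable in polynomial time via the closed geometric series for the periodic tail. Approximating each of the $N$ summands to precision $2^{-i-1}/N$ and summing yields the required $2^{-i}$-approximation of $P_\beta(\sigma)$ in time polynomial in $i+|\sigma|$, using Observation~\ref{obsering} together with the basic fact that $\min$ and $\max$ of polynomial time computable reals are polynomial time computable (approximating both arguments to precision $\varepsilon$ produces an $\varepsilon$-approximation of $\min$ or $\max$).

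No step is a real obstacle; the only mildly fiddly point is checking that the closed form from Lemma~\ref{lemapulenta}, viewed as a function of $\sigma$, can be produced and evaluated in polynomial time, which is routine because it involves only finitely many $\beta$-dependent constants multiplied by a single factor $\beta^{-|\sigma|}$ or $\beta^{-|\sigma|-\ell n}$.
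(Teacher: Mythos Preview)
Your proposal is correct. Both you and the paper start from the same ingredients---the closed form~\eqref{eqn:teoparry} for $\widehat P_\beta$ and the eventual periodicity of $\mathfrak{s}(\beta)$ from Theorem~\ref{pisotsofic}---but handle the infinite sum differently. The paper collapses the series exactly: using periodicity of $(T^n\mathfrak{s}(\beta))_n$, it rewrites $P_\beta(\sigma)$ as a \emph{fixed-length} sum (independent of the precision $i$) of the form $\sum_j \pfl(\sigma)\beta^{-j}\mathbbm{1}_{B_j}(\sigma)$ plus analogous periodic terms with factors $1/(\beta^j-1)$, where membership in $B_j=\{\tau:\tau\leqlex T^j\mathfrak{s}(\beta)\}$ is a purely symbolic linear-time test. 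You instead truncate the series at $N=O(i)$ terms and evaluate each summand numerically, computing the finitely many reals $T_\beta^n(1)\in\QQ(\beta)$ and then taking $\min/\max$ against the endpoints of $I_\sigma$. Your route is a bit heavier computationally (the number of summands grows with $i$) but more transparent, and it treats the boundary case where $T_\beta^n(1)$ falls strictly inside $I_\sigma$ explicitly via the $\min$, whereas the paper's indicator-based formula tacitly treats $I_\sigma$ as lying entirely on one side of $T_\beta^n(1)$.
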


\begin{proof}
Recall from \eqref{eqn:teoparry} the closed expression for $\widehat{P}_\beta$, the measure that $P_\beta$ induces on the unit interval.
As before, since $\beta$ is Pisot its periodic $\beta$-expansion can be written $\mathfrak{s}(\beta)=r_1\dots r_m(a_1\dots a_n)^\infty$.

Define
$$
B_k=\{\tau\in \Sigma_\beta^*\colon \tau\leqlex T^k(\mathfrak{s}(\beta))\}.
$$
Notice that $B_k$ is computable in linear time. Since $x\leq T^n_\beta(1)$ iff $p_\beta(x)\leqlex T^n(\mathfrak{s}(\beta))$ we have, for any $\sigma\in L(X_\beta)$,
$$
\widehat{P}_\beta(I_\sigma)=P_\beta(\sigma)=\sum_{j=0}^{m-1}\frac{\pfl(I_\sigma)}{\beta^j}\mathbbm{1}_{B_j}(\sigma)+\frac{1}{\beta^{m-1}}\sum_{j=1}^n\frac{\pfl(I_\sigma)}{\beta^j-1}\mathbbm{1}_{B_{j+m-1}}(\sigma),
$$
which is polynomial time computable because it consists of fixed-length sums of products of polynomial time computable functions, since $\pfl(\cdot)$, $\beta$ and the set $B_k$ are polynomial time computable.
\end{proof}

\begin{proposition}\label{prop:cdf-polytime}
Let $\beta>1$ be Pisot, and let $M$ be a polynomial time computable $P_\beta$-martingale with the savings property. Then both $\mu_M\colon L(X_\beta)\to\RR$ and $f\colon L(X_\beta)\to\RR$ given by $f(\sigma)=\cdfM(\rea{\sigma})$ are computable in polynomial time.
\end{proposition}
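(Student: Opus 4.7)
The statement has two parts which can be handled separately, with the first being essentially a one-liner and the second reducing to the first via a decomposition.

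For $\mu_M$, recall that $\mu_M(I_\sigma) = M(\sigma)\cdot P_\beta(\sigma)$ for $\sigma\in L(X_\beta)$. The hypothesis gives $M$ polynomial-time computable, Proposition~\ref{prop:P-polytime} gives $P_\beta$ polynomial-time computable, and Observation~\ref{obsering} then gives their product is polynomial-time computable. So this case is immediate.

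For $f(\sigma) = \cdfM(\rea{\sigma})$, the idea is to write the interval $[0, \rea{\sigma})$ as a disjoint union of linearly many cylinders and sum their $\mu_M$-measures. Writing $\sigma = a_1 a_2 \dots a_n$, I would verify the decomposition
\[
[0, \rea{\sigma}) \;=\; \bigsqcup_{i=1}^{n} \bigsqcup_{\substack{0 \leq j < a_i \\ a_1 \dots a_{i-1} j \,\in\, L(X_\beta)}} I_{a_1 \dots a_{i-1} j},
\]
by classifying each $x$ in the interval by the first index $k$ at which $p_\beta(x)$ differs from $\sigma 0^\infty$; such a $k$ must satisfy $1\leq k\leq n$ and $p_\beta(x)_k<a_k$, and conversely any such starting segment lands $x$ in the interval (using that $\sigma\in L(X_\beta)$ together with the lexicographic characterization of the $\beta$-shift in Theorem~\ref{betacaract}). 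Since $M$ has the savings property it is atomless, so $\cdfM(\rea{\sigma}) = \mu_M([0,\rea{\sigma}))$ equals the finite sum of $\mu_M$-measures of the cylinders above.

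To get the running-time bound: the number of summands is at most $|\sigma|\cdot\lceil\beta\rceil$, hence linear in $|\sigma|$. By the first part each term $\mu_M(I_\tau)$ (with $|\tau|\leq|\sigma|$) is polynomial-time computable, and membership $a_1\dots a_{i-1}j \in L(X_\beta)$ is decidable in polynomial time by Proposition~\ref{prop:shift_is_poly}. To approximate $f(\sigma)$ with error $2^{-i}$, I would approximate each term with error $2^{-i-\lceil\log_2(|\sigma|\lceil\beta\rceil)\rceil-1}$ and sum; the total time is polynomial in $|\sigma|+i$. The main point requiring care is the decomposition itself --- ensuring the union is truly disjoint and genuinely covers $[0,\rea{\sigma})$, accounting for candidate words $a_1\dots a_{i-1}j$ which may fall outside $L(X_\beta)$ (in which case $I_\tau=\emptyset$ and the term drops out). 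Everything else is routine bookkeeping with precisions.
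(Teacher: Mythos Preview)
Your proposal is correct and follows essentially the same approach as the paper: for $\mu_M$ use Proposition~\ref{prop:P-polytime} and Observation~\ref{obsering}, and for $f$ decompose $[0,\rea{\sigma})$ into the at most $|\sigma|\cdot\lceil\beta\rceil$ cylinders $I_{(\sigma\uh{i})b}$ with $b<\sigma_{i+1}$ and sum their $\mu_M$-measures. Your version is in fact more carefully stated than the paper's, whose displayed formula for $f(\sigma)$ omits the restriction $b<\sigma_{i+1}$ in the inner sum (as written it would sum to $\sum_i\mu_M(\sigma\uh{i})$ rather than $\mu_M([0,\rea{\sigma}])$); the intended decomposition is exactly the one you give.
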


\begin{proof}
We have $\mu_M(\sigma)=M(\sigma)P_\beta(\sigma)$.
By Proposition~\ref{prop:P-polytime} $P_\beta$ is computable in polynomial time and $M$ is also computable in polynomial time by hypothesis. So that by Observation~\ref{obsering} their product is computable in polynomial time.


For $f$ we have
$$
f(\sigma)=\mu_M([0,\rea{\sigma}])=\sum_{i=0}^{|\sigma|-1}\sum_{b\in {\sf Suc}_1(\sigma\uh{i})}\mu_M((\sigma\uh{i}) b)\nonumber
$$
By Proposition~\ref{prop:shift_is_poly} membership in ${\sf Suc}_1(\sigma)$ is checked in linear time and we have a sum of at most $|\sigma|\cdot(1+\lfloor\beta\rfloor)$ many terms, each of which is computable in polynomial time.
\end{proof}

\subsection{Polynomial time randomness implies normality to Pisot bases}\label{subsec:poly-implies-betanormal}

\begin{lemma}\label{lemautil}Let $\beta$ be a Pisot number and $M$ be a $P_\beta$-supermartingale that is computable in polynomial time and succeeds on $s\in X_\beta$. Then there is a $P_\beta$-martingale $\widehat{M}$ computable in polynomial time that succeeds on $s$.
\end{lemma}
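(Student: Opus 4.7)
The plan is to use the standard ``savings accumulation'' trick to convert a $P_\beta$-supermartingale into a $P_\beta$-martingale. Define the step-wise loss of $M$ at $\sigma \in L(X_\beta)$ by
\[
\Delta(\sigma)=M(\sigma)-\sum_{\substack{a\in \Sigma_\beta\\ \sigma a\in L(X_\beta)}}P_\beta(\sigma a\mid \sigma)M(\sigma a),
\]
which is nonnegative because $M$ is a supermartingale. Then set
\[
\widehat M(\sigma)=M(\sigma)+\sum_{\tau\prec\sigma}\Delta(\tau),
\]
with $\widehat M(\emptyset)=M(\emptyset)$.

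First I would check the martingale identity: expanding the definition gives
$\widehat M(\sigma a)=M(\sigma a)+\sum_{\tau\prec\sigma}\Delta(\tau)+\Delta(\sigma)$, so
\[
\sum_{\substack{a\in\Sigma_\beta\\ \sigma a\in L(X_\beta)}}P_\beta(\sigma a\mid \sigma)\widehat M(\sigma a)
= \sum_{\substack{a\in\Sigma_\beta\\ \sigma a\in L(X_\beta)}}P_\beta(\sigma a\mid \sigma)M(\sigma a)+\sum_{\tau\prec\sigma}\Delta(\tau)+\Delta(\sigma)
=M(\sigma)+\sum_{\tau\prec\sigma}\Delta(\tau)=\widehat M(\sigma),
\]
using $\sum_a P_\beta(\sigma a\mid\sigma)=1$ (since $P_\beta$ is $L(X_\beta)$-supported) in the last equality. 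Success on $s$ is automatic because $\Delta(\tau)\geq 0$ implies $\widehat M(s\uh N)\geq M(s\uh N)$, and $\limsup_N M(s\uh N)=\infty$ by hypothesis.

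Next I would verify that $\widehat M$ is computable in polynomial time. By Proposition~\ref{prop:P-polytime} the Parry measure $P_\beta$ is polynomial time computable, and by Proposition~\ref{prop:shift_is_poly} the set of admissible letters $\{a:\sigma a\in L(X_\beta)\}$ can be determined in linear time. Combined with the polynomial time computability of $M$ and Observation~\ref{obsering} (closure of polynomial time computable functions under sums and products), each individual quantity $\Delta(\tau)$ is polynomial time computable. The expression for $\widehat M(\sigma)$ is then a sum of at most $|\sigma|+1$ polynomial time computable terms; approximating each summand to error $2^{-(i+\lceil\log_2(|\sigma|+1)\rceil)}$ and adding yields a $2^{-i}$-approximation of $\widehat M(\sigma)$ in time polynomial in $|\sigma|+i$.

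The only subtlety is ensuring the approximation of $P_\beta(\sigma a\mid\sigma)$ in the definition of $\Delta$ is honest --- this amounts to computing a ratio whose denominator $P_\beta(\sigma)$ is strictly positive and bounded below in a uniform way via Corollary~\ref{coroconstantes} and~\eqref{eq:bound_P}, so division does not blow up precision. With that in hand the three required properties (martingale identity, success on $s$, and polynomial time computability) all hold, completing the reduction.
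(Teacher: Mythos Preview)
Your proposal is correct and follows essentially the same approach as the paper: the paper defines $d(\sigma)=M(\sigma)-P_\beta(\sigma)^{-1}\sum_a P_\beta(\sigma a)M(\sigma a)$ (identical to your $\Delta(\sigma)$) and sets $\widehat{M}(\sigma)=M(\sigma)+\sum_{\tau\prec\sigma}d(\tau)$, citing \cite[Lemma~4]{Fi2013} for the details. Your write-up is actually more complete than the paper's sketch, in particular the explicit verification of the martingale identity and the remark that the conditional probabilities stay bounded away from zero (via Corollary~\ref{coroconstantes} and~\eqref{eq:bound_P}) so that the division step does not destroy polynomial time computability.
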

\begin{proof}Same as in \cite[Lemma 4]{Fi2013}. Define $d(\sigma)=M(\sigma)-P(\sigma)^{-1}\sum_{a\in\Sigma_\beta}P(\sigma a)M(\sigma a)$ for any $\sigma\in L(X_\beta)$. Notice that $P(\sigma a)M(\sigma a)$ is computable in polynomial time by the same argument used to show in Proposition~\ref{prop:cdf-polytime} that $\mu_M$ is computable in polynomial time. Then $\widehat{M}(\sigma)=M(\sigma)+\sum_{\tau\prec\rho}d(\sigma)$ is a $P_\beta$-martingale computable in polynomial time.
\end{proof}

\begin{lemma}\label{lem:not-normal-then-p-mart-is-poly}
Let $\beta>1$ be Pisot. If $s\in X_\beta$ is not $P_\beta$-distributed then there is a polynomial time computable $P_\beta$-martingale which succeeds on $s$.
\end{lemma}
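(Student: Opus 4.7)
The plan is to invoke Theorem~\ref{teosofic} to obtain a DFA-generated $P_\beta$-(super)martingale that succeeds on $s$, and then verify that this martingale is polynomial time computable because its betting factors are algebraic and its underlying automaton is a fixed finite object. First, Theorem~\ref{pisotsofic} gives that $X_\beta$ is sofic, and $X_\beta$ is irreducible since any $\sigma\in L(X_\beta)$ can be extended by zeros and $0^\infty\in X_\beta$ connects to prefixes of any other word of the language. By Theorem~\ref{teolind}, $X_\beta$ has a minimal, irreducible, right-resolving presentation $(G,\mathcal{L})$. Bertrand's hidden-Markov theorem (invoked in the introduction) ensures that for Pisot $\beta$, $P_\beta=P\circ \mathcal{L}^{*-1}$ for an irreducible, invariant, $L(X_G)$-supported 1-step Markov measure $P$ on $\mathcal{E}(G)^\NN$. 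Since $s\in X_\beta$ is not $P_\beta$-distributed, Theorem~\ref{teosofic} produces a DFA $\widehat{M}$ and a betting factor function $\hat{b}$ whose induced $P_\beta$-(super)martingale $L$ succeeds on $s$.

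Next I would check that $L$ is polynomial time computable. The betting factors listed by Theorem~\ref{teosofic} lie in a finite set of the form $\{1,\ 1+\delta,\ 1-\delta p^*/(1-p^*),\ 1-\delta^*\}$, with $\delta,\delta^*\in\QQ$ and $p^*=P(\tau\rho\mid\tau)$ or $1-P(\tau\rho\mid\tau)$ for fixed $\tau,\rho\in\mathcal{E}(G)^*$. Each such $p^*$ is a product of entries of the 1-step transition matrix of $P$, and those entries are algebraic expressions in the Perron-Frobenius spectral data of the adjacency matrix of $G$ and in $\beta$ itself; since $\beta$ is Pisot hence algebraic, every betting factor is an algebraic real and thus polynomial time computable~\cite{Ko82}.

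To compute $L(\sigma)$ within $2^{-i}$, simulate $\widehat{M}$ on $\sigma$ in linear time to read off the sequence of $|\sigma|$ betting factors, each bounded above by some fixed $B>1$ and below by some fixed $b>0$. Approximating each factor to precision $2^{-(i+O(|\sigma|))}$ and multiplying yields $L(\sigma)$ to precision $2^{-i}$; the whole procedure is polynomial in $|\sigma|+i$ by Observation~\ref{obsering}. In case Theorem~\ref{teosofic} produced a supermartingale (item~\ref{teosofic-2}, when no synchronizing word of $(G,\mathcal{L})$ occurs in $s$), Lemma~\ref{lemautil} converts it to a polynomial time $P_\beta$-martingale that still succeeds on $s$.

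The main obstacle is the appeal to Bertrand's hidden-Markov representation of $P_\beta$ together with the verification that the Markov measure $P$ it furnishes on $\mathcal{E}(G)^\NN$ satisfies the irreducibility and full-support hypotheses needed to apply Theorem~\ref{teosofic}; these follow from the irreducibility of $X_\beta$ and the Perron-Frobenius construction of the Parry measure of maximal entropy, but this requires some careful unpacking. A secondary concern is the standard but nontrivial error analysis needed to turn polynomial time approximability of individual betting factors into polynomial time approximability of their $|\sigma|$-fold product, which succeeds here because the factors are uniformly bounded away from $0$ and $\infty$.
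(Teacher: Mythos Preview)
Your outline is structurally the same as the paper's proof: reduce to Theorem~\ref{teosofic} to obtain a DFA-generated $P_\beta$-(super)martingale, argue that its finitely many betting factors are polynomial time computable, do the product error analysis, and invoke Lemma~\ref{lemautil} for the supermartingale case. The differences are in how two steps are justified.

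For the hidden-Markov step you appeal to ``Bertrand's theorem'' as a black box and flag the verification of the hypotheses of Theorem~\ref{teosofic} as the main obstacle. The paper does not cite Bertrand here; instead it argues directly that the Parry measure $P_\beta$ is the push-forward under $\mathcal{L}^*$ of the (unique) measure of maximal entropy $P$ on the edge shift $X_G$ of the minimal right-resolving presentation. The chain is: $\mathcal{L}^*$ is finite-to-one (\cite[Example~8.1.6]{Lind95}), hence entropy-preserving (\cite[Corollary~8.1.20]{Lind95}); any invariant lift $\mu$ of $P_\beta$ to $X_G$ exists (\cite{Coven74}) and has $h_\mu(X_G)\geq h_{P_\beta}(X_\beta)=h(X_\beta)=h(X_G)$; uniqueness of the Parry measure on the irreducible SFT $X_G$ (Theorem~\ref{teoparry2}) forces $\mu=P$. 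This is exactly the ``careful unpacking'' you anticipated; note in particular that a bare statement that $P_\beta$ is hidden Markov does not by itself guarantee that the Markov cover is the \emph{specific} presentation $(G,\mathcal{L})$ required by Theorem~\ref{teosofic}.

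For the polynomial time computability of the betting factors you argue via algebraicity of the Perron--Frobenius data of the edge shift. The paper instead observes that, after synchronization, the conditional probabilities appearing in $p^*$ coincide with conditional $P_\beta$-probabilities on fixed words, and then uses the explicit integral formula~\eqref{eqn:teoparry} (Proposition~\ref{prop:P-polytime}) to get polynomial time computability directly. Both routes work; yours is more structural, the paper's is more concrete. The product error analysis in the paper is essentially the one you sketch, phrased as $L(\sigma)=p^{m_1(\sigma)}r^{m_2(\sigma)}$ with $m_i(\sigma)\leq|\sigma|$ computed by running the DFA.
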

\begin{proof}
We know from Theorem~\ref{pisotsofic} that $X_\beta$ is a sofic subshift. Moreover, by Theorem~\ref{betacaract} it is clearly irreducible (if $\sigma,\tau\in L(X_\beta)$ then $\sigma 0\tau\in L(X_\beta)$). Let $(G,\mathcal{L})$ be its minimal, irreducible, right-resolving presentation, whose existence is guaranteed by Theorem~\ref{teolind}) and let $\mathcal{L}^*\colon X_G\to X_\beta$ be the natural factor map induced by $\mathcal{L}$. Of course, $X_G$ is an irreducible, 1-step Markov shift and it is shown in \cite[Example 8.1.6]{Lind95} that $\mathcal{L}^*$ is finite-to-one\footnote{I thank Mike Boyle and Brian Marcus for pointing out this argument to me.}, that is, for any $s\in X_\beta$, $\mathcal{L}^{*-1}(s)$ is a finite set. It is also shown in \cite[Corollary 8.1.20]{Lind95} that finite-to-one factor maps between irreducible sofic shifts preserve topological entropy, where the topological entropy $h(X)$ of a subshift $(X,T)$ is defined as\footnote{The topological entropy of a dynamical system has a different, intrinsic definition not depending on measures, and our ``definition'' is actually a theorem known as the \textit{variational principle}.}
$$
h(X)=\sup_{T\text{-invariant }\mu}h_\mu(X).
$$
Thus, $h(X_G)=h(X_\beta)$. But from Theorem~\ref{teoparry2} we know that there is a unique invariant $P$ such that $h(X_G)=h_P(X_G)$ where $P$ is an irreducible, 1-step Markov measure on $X_G$. Furthermore, from Theorem~\ref{teoparry} we know that $h(X_\beta)=h_{P_\beta}(X_\beta)$. Now, it is shown in \cite[Theorem 1.1]{Coven74} that for any factor map between subshifts $\pi\colon X\to Y$ and any invariant measure $\nu$ on $Y$, there is an invariant measure $\mu$ on $X$ such that $\nu=\mu\circ\pi^{-1}$. Hence, there is some invariant $\mu$ on $X_G$ such that $P_\beta=\mu\circ\mathcal{L}^{*-1}$ and since factor maps cannot increase metric entropy (\cite[Theorem 4.11]{Walters95}) we have that $h_\mu(X_G)\geq h_{P_\beta}(X_\beta)=h(X_\beta)=h(X_G)=h_P(X_G)$. But since $P$ is the unique invariant measure of maximal metric entropy on $X_G$, it follows that $P=\mu$, and therefore $P_\beta$ is the push-forward of $P$. Thus, we are under the conditions of
Theorem~\ref{teosofic} and we have, according to item~\ref{teosofic-1}, a $P_\beta$-martingale generated by a DFA which succeeds on~$s$, and, according to item~\ref{teosofic-2}, a $P_\beta$-supermartingale generated by a DFA which succeeds on~$s$. In any case, we have a $P_\beta$-supermartingale generated by a DFA which succeeds on $s$ and whose two betting factors other than 1 are either rational or have the form $(1-\delta p^*/(1-p^*))$, where $p^*$ is the conditional $P_\beta$ probability on some fixed words. Since $P_\beta$ is polynomial time computable, then all betting factors are polynomial time computable.

Thus, our $P_\beta$-supermartingale is of the form $L(\sigma)=p^{m_1(\sigma)}r^{m_2(\sigma)}$, where $r$ is polynomial time computable, $p$ is some fixed rational and $m_1(\sigma)$ and $m_2(\sigma)$ are non negative integers smaller than $|\sigma|$ and computable in time linear in $|\sigma|$, since a DFA reads its entry in linear time.

For rational $p$ it is clear that $p^{m_1(\sigma)}$ is polynomial time computable. Now, when $r$ is not rational, $r$ is strictly smaller than 1, and is also computable in polynomial time. So, given $n$, we can compute a rational $r_n$ in time $O(q(n))$ ($q$ some polynomial) such that $|r_n-r|<2^{-n}$. Then, if $\epsilon_n=r-r_n$, we have, for $m=m_2(\sigma)$,
$$
|r^{m}-r_n^{m}|\leq |r^{m}|+|(r+\epsilon_n)^{m}|\leq 2\sum_{i=1}^{m} \binom{m}{i}|\epsilon_n|^ir^{m-i}\leq 2^{m}|\epsilon_n|=2^{m-n}.
$$
Thus, given $|\sigma|$ and $k$, we can compute a rational $r_n$ in time $O(q(n))$ for $n=m_2(\sigma)+k+1\leq |\sigma|+k+1$, and we can compute $m_2(\sigma)$ in time $O(q'(|\sigma|)$ for some polynomial $q'$. Therefore, since exponentiation by squaring has (strictly less than) polynomial time complexity the number $r(\sigma,k)=r_n^{m_2(\sigma)}$ can be computed in $O(q''(|\sigma|+k))$ time for some polynomial $q''$, and satisfies
$
|r^{m_2}-r(\sigma,k)|<2^{-k}.
$
Hence, $r^{m_2(\sigma)}$ is computable in polynomial time and so is the $P_\beta$-supermartingale $L$.
By Lemma~\ref{lemautil} there is a polynomially time computable $P_\beta$-martingale that succeeds on $s$.
\end{proof}

\begin{theorem}\label{teocacho}Let $\beta>1$ be Pisot and $x\in[0,1]$ be a number such that a $P_\beta$-martingale computable in polynomial time succeeds on the $\beta$-expansion of $x$. Then there is a polynomial time binary martingale that succeeds on the binary expansion of $x$.
\end{theorem}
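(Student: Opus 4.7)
The plan is to follow the chain of lemmas that have already been stacked in Sections~\ref{subsec:change-base}--\ref{subsec:cdf}, and verify that the natural `change of base' construction of Lemma~\ref{lem:martingales-base-conversion} is in fact polynomial time computable. Concretely, given a polynomial time computable $P_\beta$-martingale $M_0$ that succeeds on the $\beta$-expansion of $x$, I would first apply Lemma~\ref{lem:convert-to-savings-property} to obtain a polynomial time computable $P_\beta$-martingale $M$ with the savings property that still succeeds on the $\beta$-expansion of $x$. Having the savings property unlocks all the analytic tools of Section~\ref{subsec:cdf}: the bound of Corollary~\ref{cor:malabia}, the `almost Lipschitz' behavior of $\cdfM$ from Proposition~\ref{cor:lipschitz}, and the polynomial time computability of $\cdfM$ on $\beta$-adic inputs from Proposition~\ref{prop:cdf-polytime}.

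Then I define the binary martingale $N$ exactly as in Lemma~\ref{lem:martingales-base-conversion}, namely
\[
N(\tau)=\frac{\cdfM(\reabin{\tau}+2^{-|\tau|})-\cdfM(\reabin{\tau})}{2^{-|\tau|}}.
\]
Assuming for now that $x$ is neither $\beta$-adic nor a dyadic rational, Lemma~\ref{lem:martingales-base-conversion} gives that $N$ succeeds on the binary expansion of $x$, so the content left to prove is that $N$ is polynomial time computable.

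The main work, and the step I expect to be the real obstacle, is approximating $\cdfM$ at a dyadic rational $r=\reabin{\sigma}$ in polynomial time with precision fine enough to survive division by $2^{-|\tau|}$. Here I would chain three tools: (i) use Proposition~\ref{prop:approx} to compute, in time polynomial in $|\sigma|+i$, a word $\tau'\in L(X_\beta)$ with $|\rea{\tau'}-r|\leq 2^{-i}$; (ii) use Proposition~\ref{prop:cdf-polytime} to compute $\cdfM(\rea{\tau'})$ in polynomial time; and (iii) use the almost-Lipschitz bound of Proposition~\ref{cor:lipschitz} to conclude that $|\cdfM(r)-\cdfM(\rea{\tau'})|\leq C\cdot i\cdot 2^{-i}$ for some constant $C$. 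To approximate $N(\tau)$ within error $2^{-k}$ it then suffices to approximate each of the two $\cdfM$ values with error $2^{-|\tau|-k-2}$, which via (iii) forces $i$ of the order $|\tau|+k+O(\log(|\tau|+k))$, hence still polynomial in $|\tau|+k$. All three ingredients are polynomial in their inputs, so the total running time is polynomial in $|\tau|+k$, which is the definition of polynomial time computability for $N$.

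Finally, I would dispose of the two excluded cases. If $x$ is a dyadic rational then its binary expansion is eventually constant and a trivial polynomial time binary martingale that bets on an infinite tail of zeros (or ones) succeeds on it. If $x$ is $\beta$-adic but not dyadic, then $x$ is an algebraic number computable in polynomial time, so its binary expansion is polynomial time computable, and a standard polynomial time martingale that bets deterministically in favor of this computable sequence (as in~\cite{Fi2013}) succeeds on it. In either case a polynomial time binary martingale succeeds on the binary expansion of $x$, completing the proof.
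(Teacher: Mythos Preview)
Your proposal is correct and follows essentially the same route as the paper: the paper reduces Theorem~\ref{teocacho} to Lemma~\ref{lem:main}, whose proof defines $N$ exactly as you do and establishes its polynomial time computability via precisely the three ingredients you name (Proposition~\ref{prop:approx} for the $\beta$-adic approximation of a dyadic input, Proposition~\ref{prop:cdf-polytime} for $\cdfM$ on $\beta$-adic inputs, and Proposition~\ref{cor:lipschitz} to control the error). Your explicit treatment of the degenerate cases ($x$ dyadic or $\beta$-adic) and your precision bookkeeping are spelled out more fully than in the paper, which simply defers those details to the corresponding argument in~\cite{Fi2013}.
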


The proof is the same as that of \cite[Theorem 14]{Fi2013} with \cite[Lemma 15]{Fi2013} replaced with the following:
\begin{lemma}\label{lem:main}
Let $\beta>1$ be Pisot. For any
  polynomial time computable $P_\beta$-martingale $M\colon L(X_\beta)\to\RR_{\geq0}$ with the savings
property there is a classic martingale $N\colon \{0,1\}^*\to\RR_{\geq0}$ such that $N$ is polynomial time computable, and  whenever $M$ succeeds on $s\in X_\beta$, and $x\in[0,1]$ is such that $p_\beta(x)=s$ then $N$ succeeds on the fractional binary expansion of $x$.
\end{lemma}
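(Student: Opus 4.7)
The plan is to take $N$ to be the classical martingale produced in Lemma~\ref{lem:martingales-base-conversion}, namely
\[
N(\tau)=\bigl(\cdfM(\reabin{\tau}+2^{-|\tau|})-\cdfM(\reabin{\tau})\bigr)\cdot 2^{|\tau|},
\]
which is automatically a martingale on $\{0,1\}^*$ because $\cdfM$ is nondecreasing. The success clause is exactly what Lemma~\ref{lem:martingales-base-conversion} yields whenever $x$ is neither $\beta$-adic nor a dyadic rational. The exceptional cases are innocuous for the use in Theorem~\ref{teocacho}: $\beta$-adic reals are algebraic hence polynomial time computable, and dyadic rationals trivially are, so a simple polynomial time classical martingale already succeeds on the binary expansion of such $x$. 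Thus the only substantive point is showing that $N$ itself is polynomial time computable.

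I cannot evaluate $\cdfM$ directly at the dyadic rationals $\reabin{\tau}$ and $\reabin{\tau}+2^{-|\tau|}$, since Proposition~\ref{prop:cdf-polytime} only gives efficient evaluation of $\cdfM$ at $\beta$-adic inputs. My plan is as follows: given $\tau\in\{0,1\}^*$ and precision $2^{-i}$, first invoke Proposition~\ref{prop:approx} to produce, in time polynomial in $|\tau|+k'$, words $\sigma_0,\sigma_1\in L(X_\beta)$ with $|\rea{\sigma_0}-\reabin{\tau}|\leq 2^{-k'}$ and $|\rea{\sigma_1}-(\reabin{\tau}+2^{-|\tau|})|\leq 2^{-k'}$, for a parameter $k'$ to be fixed. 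Next, invoke Proposition~\ref{prop:cdf-polytime} to obtain rationals $v_0,v_1$ within $2^{-k'}$ of $\cdfM(\rea{\sigma_0})$ and $\cdfM(\rea{\sigma_1})$ respectively, again in time polynomial in $|\tau|+k'$. The final output is $(v_1-v_0)\cdot 2^{|\tau|}$.

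It remains to choose $k'$ so that the total error is at most $2^{-i}$, while $k'$ stays polynomial in $|\tau|+i$. The almost Lipschitz bound of Proposition~\ref{cor:lipschitz} gives $|\cdfM(\rea{\sigma_j})-\cdfM(y_j)|\leq K\cdot 2^{-k'}\cdot k'$ once $2^{-k'}$ is below the threshold $\epsilon$ of that proposition, where $y_0=\reabin{\tau}$ and $y_1=\reabin{\tau}+2^{-|\tau|}$. Combined with the $2^{-k'}$ slack from evaluating $\cdfM$, the error in $(v_1-v_0)\cdot 2^{|\tau|}$ is bounded by $C\cdot k'\cdot 2^{|\tau|-k'}$ for a constant $C$ depending only on $\beta$ and $M$. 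Setting $k':=i+|\tau|+\lceil\log_2\bigl(C(i+|\tau|+1)\bigr)\rceil+1$ makes this quantity at most $2^{-i}$ and keeps $k'$ polynomial in $i+|\tau|$. The hard part is exactly this error accounting: the $2^{|\tau|}$ blowup from dividing by $2^{-|\tau|}$ together with the logarithmic factor in Proposition~\ref{cor:lipschitz} force a careful tuning of $k'$, and it is precisely the almost Lipschitz estimate (rather than a weaker modulus of continuity) that keeps $k'$ polynomial in $|\tau|+i$.
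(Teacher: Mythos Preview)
Your proposal is correct and follows essentially the same approach as the paper: define $N=\mart_{\cdfM}$, approximate the two dyadic endpoints by $\beta$-adic points via Proposition~\ref{prop:approx}, evaluate $\cdfM$ there via Proposition~\ref{prop:cdf-polytime}, and use the almost Lipschitz estimate of Proposition~\ref{cor:lipschitz} to absorb the $2^{|\tau|}$ blowup into a polynomial choice of precision. Your explicit error bookkeeping and your remark that the $\beta$-adic/dyadic exceptions from Lemma~\ref{lem:martingales-base-conversion} are harmless for Theorem~\ref{teocacho} are spelled out more carefully than in the paper, which simply defers that verification to the argument of \cite[Lemma~15]{Fi2013}.
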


\begin{proof}
%

%

By Proposition~\ref{prop:cdf-polytime}, there is a polynomial time computable function $\widehat \cdfM\colon \Sigma^*_\beta\times\NN\to\QQ$ such that $|\widehat \cdfM(\tau,i)-\cdfM(\rea{\tau})|\leq 2^{-i}$.


Define the classical martingale
$N\colon \{0,1\}^*\to\RR_{\geq 0}$ as $N(\tau)=(\cdfM(p_2)-\cdfM(p_1))/2^{-|\tau|}$, where $p_1 =\reabin{\tau}$ and $p_2 =\reabin{\tau}+2^{-|\tau|}$. $N$ has a polynomial time computable approximation $\widehat N\colon \{0,1\}^*\times\NN\to\QQ$, defined by
$$
\widehat N(\tau,i) = \frac{\widehat\cdfM(\tau_2,i+2)-\widehat\cdfM(\tau_1,i+2)}{2^{|\tau|}},
$$
where for $j=1,2$ the string $\tau_j\in\Sigma_\beta^*$ is an approximation of $p_j$ with error $2^{-2v-1}$, for $v=i+2+k$. By Proposition~\ref{prop:approx} and the definition of $N$, we conclude that $N$ is polynomial time computable. The proof that $|N(\tau)-\widehat N(\tau,i)|\leq 2^{-i}$ is the same as that of \cite[Fact 16 in the proof of Lemma 15]{Fi2013}, using Proposition~\ref{cor:lipschitz} instead of \cite[Proposition 12]{Fi2013} for the Lipschitz condition.
\end{proof}

We finally arrive to the main theorem of his section:

\begin{theorem}
Let $\beta>1$ be Pisot. If the fractional binary expansion of $x\in[0,1]$ is polynomial time random then it is normal to base $\beta$.
\end{theorem}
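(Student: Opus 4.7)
The plan is to argue by contradiction, assembling the pieces from \S\ref{sec:polyrndness} in reverse order. Suppose $x\in[0,1]$ has polynomial time random binary expansion, yet $x$ is not normal to base $\beta$, i.e.\ $(x\beta^n)_{n\in\NN}$ is not uniformly distributed modulo one. First I would invoke the contrapositive of Theorem~\ref{thm:pisot-ud} (the Bertrand theorem for Pisot bases): since $\beta$ is Pisot and $(x\beta^n)_{n\in\NN}$ fails to be u.d.\ modulo one, the fractional $\beta$-expansion $p_\beta(x)\in X_\beta$ cannot be $P_\beta$-distributed.

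Next I would feed the non-$P_\beta$-distributed sequence $p_\beta(x)$ into Lemma~\ref{lem:not-normal-then-p-mart-is-poly}, which delivers a polynomial time computable $P_\beta$-martingale $M$ that succeeds on $p_\beta(x)$. This is the step where all of the structural work of the paper is concentrated: the lemma silently uses sofic-ness of $X_\beta$ from Theorem~\ref{pisotsofic}, the sofic generalization Theorem~\ref{teosofic} to produce a DFA-generated $P_\beta$-(super)martingale, the reduction from supermartingale to martingale via Lemma~\ref{lemautil}, and the polynomial time computability of $P_\beta$ from Proposition~\ref{prop:P-polytime} used to check that the DFA betting factors are themselves polynomial time computable.

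Once $M$ is in hand, I would apply Theorem~\ref{teocacho} (whose proof goes through Lemma~\ref{lem:main} and the almost-Lipschitz bound in Proposition~\ref{cor:lipschitz}, which in turn relies on the savings property via Lemma~\ref{lem:convert-to-savings-property}) to obtain a classical polynomial time martingale $N\colon\{0,1\}^*\to\RR_{\geq 0}$ that succeeds on the fractional binary expansion of $x$. This directly contradicts the hypothesis that $x$ is polynomial time random, completing the proof.

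The only subtle point is the edge case where $x$ is a dyadic rational or a $\beta$-adic number: in those cases $x$ trivially fails to be polynomial time random (a constant-bet martingale succeeds on an eventually-zero binary expansion), so the hypothesis of the theorem excludes them, and the uniqueness/measure issues in applying Lemma~\ref{lem:martingales-base-conversion} do not arise. Thus the main obstacle is not in this final argument itself, which is essentially a three-line chain of applications, but rather in ensuring that the previously established machinery (in particular Lemma~\ref{lem:not-normal-then-p-mart-is-poly}, which packages the sofic extension together with the polynomial time verification of the betting factors) has been stated strongly enough to be invoked directly here.
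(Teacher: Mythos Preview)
Your argument is correct and matches the paper's own proof, which is exactly the contrapositive chain Lemma~\ref{lem:not-normal-then-p-mart-is-poly} $\to$ Lemma~\ref{lem:convert-to-savings-property} $\to$ Theorem~\ref{teocacho}; the paper places the savings conversion explicitly before invoking Theorem~\ref{teocacho} rather than inside it, and it takes ``normal to base $\beta$'' to mean $P_\beta$-distributed, so it omits your initial appeal to Theorem~\ref{thm:pisot-ud}. One small slip in your side remark on edge cases: a $\beta$-adic number need not have an eventually-zero \emph{binary} expansion, so the constant-bet argument does not apply; the correct reason such an $x$ cannot be polynomial time random is that it is algebraic (a finite $\QQ[\beta]$-combination of powers of $\beta$) and hence polynomial time computable.
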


\begin{proof}
We proceed by contradiction. Suppose that $Y\in \{0,1\}^\NN$, the fractional binary expansion of $x$, is
not polynomial time random to base~$\beta$. By Lemma~\ref{lem:not-normal-then-p-mart-is-poly}, there is  a polynomial time computable $P_\beta$-martingale $M$ which succeeds on $s=p_\beta(x)$, and  by
Lemma~\ref{lem:convert-to-savings-property} there is a polynomial time computable $P_\beta$-martingale $\widetilde M$ with the savings property that succeeds on all the sequences $M$
succeeds on, in particular on $s$. By Theorem~\ref{teocacho}, $Y$ is not polynomial time random.
\end{proof}

\bibliographystyle{plain}
\bibliography{biblio}

\begin{thebibliography}{10}

\bibitem{BHS13}
Ver\'onica Becher, Pablo Heiber, and Theodore~A. Slaman.
\newblock A polynomial-time algorithm for computing absolutely normal numbers.
\newblock {\em Information and Computation}, 232:1--9, 2013.

\bibitem{Bertrand86}
Anne Bertrand-Mathis.
\newblock D{\'e}veloppement en base {$\theta$}, r{\'e}partition modulo un de la
  suite {$(x\theta^n),n\geq 0$}, langages cod{\'e}s et {$\theta$}-shift.
\newblock {\em Bull. Soc. math. France}, 114:271--323, 1986.

\bibitem{Bratt11}
Vasco Brattka, Joseph~S. Miller, and Andr\'e Nies.
\newblock Randomness and differentiability.
\newblock {\em Trans. Amer. Math. Soc.}
\newblock To appear.

\bibitem{Brown.Moran.etal:86}
Gavin Brown, William Moran, and Charles E.~M. Pearce.
\newblock A decomposition theorem for numbers in which the summands have
  prescribed normality properties.
\newblock {\em J. Number Theory}, 24(3):259--271, 1986.

\bibitem{Coven74}
Ethan~M. Coven and Michael~E. Paul.
\newblock Endomorphisms of irreducible subshifts of finite type.
\newblock {\em Math. Syst. Th.}, 8:167--175, 1974.

\bibitem{Fi2013}
Santiago Figueira and Andr\'e Nies.
\newblock Feasible analysis, randomness and base invariance.
\newblock {\em Theory of Computing Systems}, 2013.

\bibitem{Kitchens98}
Bruce Kitchens.
\newblock {\em Symbolic Dynamics. One-sided, two-sided and countable state
  Markov shifts}.
\newblock Springer, Berlin, 1998.

\bibitem{Ko82}
Ker-I Ko and Harvey Friedman.
\newblock Computational complexity of real functions.
\newblock {\em Theor. Comput. Sci.}, 20:323--352, 1982.

\bibitem{Lind95}
Douglas Lind and Brian Marcus.
\newblock {\em An Introduction to Symbolic Dynamics and Coding}.
\newblock Cambridge University Press, Cambridge, 1995.

\bibitem{LM12}
J.~Lutz and E.~Mayordomo.
\newblock Construction of an absolutely normal real number in polynomial time.
\newblock Manuscript, 2012.

\bibitem{Norr97}
James~R. Norris.
\newblock {\em Markov Chains}.
\newblock Cambridge University Press, Cambridge, 1997.

\bibitem{Parry60}
William. Parry.
\newblock On the {$\beta$}-expansions of real numbers.
\newblock {\em Acta Mathematica Hungarica}, 11:401--416, 1960.

\bibitem{Parry64}
William Parry.
\newblock Intrinsic markov chains.
\newblock {\em Trans. Amer. Math. Soc.}, 112:55--66, 1964.

\bibitem{Polli81}
A.~D. Pollington.
\newblock The {H}ausdorff dimension of a set of normal numbers.
\newblock {\em Pacific Journal of Mathematics}, 95:193--204, 1981.

\bibitem{Reimann06}
Jan Reimann.
\newblock Randomness: Beyond lebesgue measure.
\newblock In A.~Pillay B.~Cooper, H.~Geuvers and J.~V\"{a}\"{a}n\"{a}nen,
  editors, {\em Logic Colloquium 2006}. Cambridge University Press, 2006.

\bibitem{S71}
Claus-Peter Schnorr.
\newblock {Z}uf\"alligkeit und {W}ahrscheinlichkeit.
\newblock {\em Lecture Notes in Mathematics}, 218, 1971.

\bibitem{Sch72}
Claus-Peter Schnorr and H.~Stimm.
\newblock {E}ndliche {A}utomaten und {Z}ufallsfolgen.
\newblock {\em Acta Inf.}, 1:345--359, 1972.

\bibitem{Shannon48}
Claude~E. Shannon.
\newblock A mathematical theory of communication.
\newblock {\em Bell System Tech. J.}, 27:379--423, 1948.

\bibitem{Walters95}
Peter Walters.
\newblock {\em An introduction to Ergodic Theory}.
\newblock Springer-Verlag, New York, 1982.
\newblock Graduate Texts in Mathematics.

\end{thebibliography}

\end{document}